\newcommand{\Z}{\mathbb{Z}}
\newcommand{\Q}{\mathbb{Q}}
\newcommand{\R}{\mathbb{R}}
\newcommand{\C}{\mathbb{C}}
\DeclareMathOperator{\h}{h}
\newcommand{\bsubtr}{b_{\text{subtr}}}
\newcommand{\hsubtr}{h_{\text{subtr}}}
\newcommand{\caselIold}[1]{\medskip\noindent\textit{#1}}
\newcommand{\caselI}[1]{\medskip\noindent\textbf{#1}}
\newcommand{\caselII}[1]{\smallskip\textbf{#1}}
\newcommand{\caselIII}[1]{\smallskip\textbf{\indent #1}}
\newtheorem*{theorem*}{Theorem}
\newtheorem{knownthm}{Theorem}[section]
\newtheorem{newlem}[knownthm]{Lemma}
\newtheorem{knownlem}[knownthm]{Lemma}
\newtheorem{cor}[knownthm]{Corollary}
\newtheorem*{claim*}{Claim}
\newtheorem{knownprop}[knownthm]{Proposition}
\newtheorem{newthm}[knownthm]{Theorem}
\newtheorem{newprop}[knownthm]{Proposition}
\theoremstyle{definition}
	\newtheorem{defi}[knownthm]{Definition}
	\newtheorem{knownquest}[knownthm]{Question}
	\newtheorem{rem}[knownthm]{Remark}
	\newtheorem*{rem*}{Remark}
	\newtheorem{knownprobl}[knownthm]{Problem}
	\newtheorem{newquest}[knownthm]{Question}
\numberwithin{equation}{section}
\begin{document}

\title[Consecutive multiplicatively dependent triples]{More on consecutive multiplicatively dependent triples of integers}
\subjclass[2020]{11N25, 11D61, 11J86} 
\keywords{Multiplicative dependence, Pillai’s problem, Linear forms in logarithms}
\thanks{M.B.\ was supported by a grant from NSERC. I.P. was supported by the NKFIH grant ANN 130909. I.V.\ was supported by the Austrian Science Fund (FWF) under the project I4406, as well as by the Austrian Federal Ministry of Education, Science and Research (BMBWF), grant no. SPA 01-080 MAJA}

\author[M. A. Bennett]{Michael A. Bennett}
\address{M. A. Bennett, University of British Columbia, Department of Mathematics, 1984 Mathematics Road, Vancouver B.C., Canada V6T 1Z2}
\email{bennett@math.ubc.ca}

\author[I. Pink]{István Pink}
\address{I. Pink,
University of Debrecen, Institute of Mathematics,
H-4002 Debrecen, P.O. Box 400,
Hungary}
\email{pinki@science.unideb.hu}

\author[I. Vukusic]{Ingrid Vukusic}
\address{I. Vukusic,
University of Waterloo,
Faculty of Mathematics,
200 University Avenue West,
Waterloo Ontario,
Canada N2L 3G1}
\email{ingrid.vukusic@uwaterloo.ca}

\begin{abstract}
In this paper, we extend recent work of the third author and Ziegler on triples of integers $(a,b,c)$, with the property that each of $(a,b,c)$, $(a+1,b+1,c+1)$ and $(a+2,b+2,c+2)$ is {\it multiplicatively dependent}, completely classifying such triples in case $a=2$. Our techniques include a variety of elementary arguments together with more involved machinery from Diophantine approximation.
\end{abstract}

\maketitle

\section{Introduction}

An $n$-tuple of numbers $(z_1, \ldots, z_n) \in \C^n$ is said to be \textit{multiplicatively dependent}, if there exists a non-zero tuple $(k_1, \ldots, k_n) \in \Z^n$ such that 
\[
	z_1^{k_1} \cdots z_n^{k_n} = 1. 
\]
In this paper, we will consider tuples $(z_1, \ldots, z_n)$ of positive integers and, for simplicity,  assume $1 < z_1 < \dots < z_n$. Of course, there exist many multiplicatively dependent tuples $(z_1, \ldots, z_n)$; see \cite{PappalardiShaShparlinskiStewart2018} for asymptotics.
We will focus our attention on ``consecutive'' tuples, by which we mean pairs of tuples of the shape $(z_1, \ldots, z_n)$ and $(z_1 + 1, \ldots, z_n + 1)$.

For pairs of integers, the situation is rather simple: If $1<a<b$, then $(a,b)$ is multiplicative dependent if and only if it is of the shape $(a,b) = (q^x, q^y)$ for some integers $q>1$ and $1 \leq x<y$.
Thus, it is relatively easy to see, appealing for example to Mih\u{a}ilescu's theorem  \cite{Mihailescu2004}, that $(2,8)$ and $(3,9)$ are the only consecutive multiplicatively dependent pairs of integers. In fact, this statement is equivalent to a well-known theorem of LeVeque \cite{LeVeque1952} from the 1950s; see \cite{VukusicZiegler2021} for details.

For triples, the situation is significantly  more complicated. 
From the definition of multiplicative dependence, a triple can be multiplicatively dependent even if only two of the numbers ``contribute'' to the dependence, e.g.\ $(2,8,13)$ is multiplicatively dependent, and so is $(3,9,14)$. From this observation, it is very easy to generate integer triples such that both $(a,b,c)$ and $(a+1,b+1,c+1)$ are multiplicatively dependent. Even if we exclude $a=2$ and $b=8$, one can take triples of the shape $(a,b,c)=(q^x, q^y, (q^x + 1)^s - 1)$; see \cite[Theorem 4]{VukusicZiegler2021} for more examples. 

In fact, one can even construct a family of three consecutive multiplicatively dependent triples:
\begin{equation}\label{eq:fam1}
	(2,8,2^x 5^y - 2), (3,9,2^x 5^y - 1), (4,10,2^x 5^y),
\end{equation}
where $x$ and $y$ are nonzero integers.
It is not well understood yet how many such triples exist outside of this family. In the range $1 < a < b < c \leq 1000$, we find  $11$ triples $(a,b,c)$ such that $(a,b,c)$, $(a+1,b+1,c+1)$ and $(a+2,b+2,c+2)$ are each multiplicatively dependent, and that are not in the family \eqref{eq:fam1}.
In \cite[Question 3]{VukusicZiegler2021} the authors posed the following question.

\begin{knownquest}\label{quest:knownYes}
Are there infinitely many triples $(a,b,c)$ of pairwise distinct integers larger than 1 with $\{2,8\}\not\subset \{a,b,c\}$ such that $(a,b,c)$, $(a+1,b+1,c+1)$ and $(a+2,b+2,c+2)$ are each multiplicatively dependent?
\end{knownquest}

This question turns out to be easier to answer than the authors of \cite{VukusicZiegler2021} expected. Indeed, the answer is ``yes'', as  each of the three consecutive triples 
\begin{align}
&(2, 2^x-2, 2^{2x}-2^{x+1}) = (2, 2(2^{x-1}-1), 2^{x+1}(2^{x-1}-1)), \nonumber\\
&(3, 2^x-1, 2^{2x}-2^{x+1}+1) = (3, 2^x - 1, (2^x-1)^2),\label{eq:newFamily} \\
&(4, 2^x, 2^{2x}-2^{x+1}+2) = (2^2, 2^x, (2^x-1)^2 + 1) \nonumber
\end{align}
is clearly multiplicatively dependent for any integer $x\geq 3$.

In particular, this means that, for fixed $a=2$, there exist infinitely many integers $2<b<c$ such that $(a,b,c)$, $(a+1,b+1,c+1)$ and $(a+2,b+2,c+2)$ are each multiplicatively dependent. Interestingly, the main result in \cite{VukusicZiegler2021} ensures that this cannot happen for any $a \neq 2$:

\begin{knownthm}[{cf.\ \cite[Theorem 5]{VukusicZiegler2021}}]\label{thm:oldMain_newVer}
Let $a\geq 3$ be a fixed integer. Then there are only finitely many integers $b,c$ with $a<b<c$ such that $(a,b,c)$, $(a+1,b+1,c+1)$ and $(a+2,b+2,c+2)$ are each multiplicatively dependent, and they are bounded effectively in terms of $a$.
\end{knownthm}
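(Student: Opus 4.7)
My plan is to combine a case analysis over the supports of the multiplicative dependence relations with effective lower bounds on linear forms in logarithms. Fix $a \geq 3$ and, for each $i \in \{0,1,2\}$, let $(k_{i,1}, k_{i,2}, k_{i,3}) \in \Z^3 \setminus \{(0,0,0)\}$ be a primitive integer vector with
\[
 (a+i)^{k_{i,1}}(b+i)^{k_{i,2}}(c+i)^{k_{i,3}} = 1.
\]
Since we aim for an effective upper bound on $b$ and $c$ in terms of $a$, I may assume from the outset that $c$ is very large compared to $a$, and work toward an explicit upper bound on $c$.

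The first step is a case split according to the supports $S_i = \{j : k_{i,j} \neq 0\}$. If $S_i \subseteq \{1,2\}$, then $a+i$ and $b+i$ are multiplicatively dependent, so $a+i = q_i^{x_i}$ and $b+i = q_i^{y_i}$ for a common base $q_i$ uniquely determined by $a+i$ and hence bounded in terms of $a$; similarly if $S_i \subseteq \{1,3\}$, the integer $c+i$ is a perfect power of a base controlled by $a$. If two or more of the shifts $i \in \{0,1,2\}$ fall into these regimes, one obtains a simultaneous Pillai-type equation --- two consecutive integers each a perfect power of a fixed base --- whose effective finiteness follows from Baker's method (and for a few of the narrowest instances from Mihailescu's theorem).

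The substantive regime is when, for at least one shift $i$, either $k_{i,1} = 0$ or $|k_{i,1}| \log(a+i)$ is dominated by $|k_{i,2}|\log(b+i)$ and $|k_{i,3}|\log(c+i)$. In that case the relation can be rearranged as
\[
 k_{i,2}\log(b+i) + k_{i,3}\log(c+i) = -k_{i,1}\log(a+i),
\]
whose right-hand side is bounded by an explicit multiple of $\log a$. Pairing this identity across two distinct shifts $i$ and using the expansion $\log(b+i) - \log(b+j) = (i-j)/b + O(1/b^2)$ (and analogously for $c$) produces a linear form $\Lambda$ in the two logarithms $\log b$ and $\log c$ with $|\Lambda| = O(\max(|k_{i,2}|, |k_{i,3}|)/b)$. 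Matveev's lower bound for linear forms in logarithms then yields an estimate of the shape
\[
 \log|\Lambda| \geq -C(a)\, \log(\max_{i,j}|k_{i,j}|)\, \log b,
\]
and combining this with the smallness of $|\Lambda|$ bounds the exponents, and finally $b$ and $c$, effectively in terms of $a$.

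The main obstacle is ensuring that the linear form $\Lambda$ produced by eliminating $\log(a+i)$ across two shifts is genuinely nonzero. This amounts to a rank condition on the $3 \times 3$ matrix $(k_{i,j})$ restricted to the rows and columns corresponding to the supports in play; when it fails, the relations collapse to a multiplicative dependence purely between two of $b, b+1, b+2, c, c+1, c+2$, which in turn pushes one back into a Pillai-type case already handled. Verifying this non-degeneracy, together with handling the case $|S_i| = 3$ for all $i$ in which the elimination must be iterated across multiple pairs of shifts, is the technically heaviest portion of the argument; but in every branch the outcome is an effective bound $c \leq C(a)$ with $C(a)$ depending only on $a$.
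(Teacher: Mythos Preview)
The paper does not prove this theorem; it is quoted from \cite{VukusicZiegler2021}, with only a remark that the original hypothesis $a\neq 8$ becomes superfluous once one orders $a<b<c$. There is thus no in-paper proof to compare against.

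Your sketch has the right high-level shape, but the ``substantive regime'' contains a real gap. You propose to difference the relations across two shifts, obtain a linear form $\Lambda$ in $\log b$ and $\log c$ with $|\Lambda|=O(\max_{i,j}|k_{i,j}|/b)$, and then invoke Matveev in the form $\log|\Lambda|\ge -C(a)\log(\max|k_{i,j}|)\log b$. Two problems. First, a Matveev-type lower bound for a nonzero form in $\log b$ and $\log c$ necessarily carries the product $\log b\cdot\log c$, not $\log b$ alone, so with both $b$ and $c$ unbounded the resulting inequality is vacuous. Second, you have no a~priori bound on the $|k_{i,j}|$: for a primitive relation with $k_{i,3}\neq 0$ one can easily have $|k_{i,j}|$ of order $\log c/\log(a+i)$, so the putative error term $\max|k_{i,j}|/b$ need not be small at all. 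The actual proof in \cite{VukusicZiegler2021} (and its $a=2$ incarnation carried out in Section~\ref{sec:a2} of the present paper) avoids both issues by first parametrising $b+i$ and $c+i$ as products of powers of the primes dividing $a+i$ and of a single auxiliary base $q_i$, then playing the three shifts off one another to reduce to explicit exponential Diophantine equations whose bases are controlled in terms of $a$; only then are lower bounds for linear forms in logarithms applied, now with bounded heights. That structural reduction is the missing idea in your outline.
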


Note that Theorem 5 in \cite{VukusicZiegler2021} is formulated slightly differently:
there, the triples are not ordered and $a \notin \{2,8\}$ is required. However, the proof reveals that $a=8$
only needs to be excluded if $b=2$ or $c=2$ is allowed, which in the above formulation is not the case.

In view of Theorem~\ref{thm:oldMain_newVer}, an obvious question is the following: For $a=2$, how many triples outside of the families \eqref{eq:fam1} and \eqref{eq:newFamily} are there? We answer this question completely with the next result.

\begin{newthm}\label{thm:a2}
Let $(2,b,c)$ be a triple of integers with $2<b<c$, and the property that $(2,b,c)$, $(3,b+1,c+1)$ and $(4,b+2,c+2)$ are each multiplicatively dependent. Then
either $(a,b,c)$ is in one of the families \eqref{eq:fam1}, \eqref{eq:newFamily}, or
$(a,b,c)=(2,4,14)$, or $(a,b,c)=(2,6,16)$.
\end{newthm}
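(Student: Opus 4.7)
The plan is to decode the three multiplicative-dependence hypotheses into concrete structural constraints on the prime factorizations of $b, b+1, b+2, c, c+1, c+2$, and then exhaust the resulting configurations.

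First, since $4 = 2^2$, the hypothesis on $(4, b+2, c+2)$ is equivalent to one on $(2, b+2, c+2)$. For any prime $p$ and integers $X, Y > 1$, writing $X = p^\xi X'$ and $Y = p^\eta Y'$ with $p \nmid X'Y'$, multiplicative dependence of $(p, X, Y)$ is equivalent to multiplicative dependence of the pair $(X', Y')$, which in turn means $X' = 1$, $Y' = 1$, or $X'$ and $Y'$ are both nontrivial powers of a common integer base. Applying this with $p \in \{2, 3, 2\}$ to the three hypotheses yields three parallel constraints, each branching into three subcases, for a nominal total of $27$ combinations. The list is pruned substantially by elementary identities such as: $b = 2^\alpha$ together with $b+2 = 2^{\alpha'}$ forces $b = 2$ (excluded since $b > 2$), and similarly for $c$; so certain subcase pairs between (I) and (III) are vacuous.

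In most of the surviving configurations one either recognises the shape of the families \eqref{eq:fam1} or \eqref{eq:newFamily} directly, or the problem reduces to a Pillai-type equation $p^u - q^v \in \{1, 2\}$ which can be handled by Mihailescu's theorem \cite{Mihailescu2004} together with elementary $p$-adic arguments. The hard configuration is the "generic" one in which all three of (I), (II), (III) are satisfied through nontrivial common bases $q, r, s$: then
\[
b = 2^\alpha q^i,\quad c = 2^\gamma q^{i'},\quad b+1 = 3^\beta r^j,\quad c+1 = 3^\delta r^{j'},\quad b+2 = 2^{\alpha'} s^k,\quad c+2 = 2^{\gamma'} s^{k'},
\]
and combining the six difference relations $(b+1) - b = 1$, $(b+2) - (b+1) = 1$, $(b+2) - b = 2$, and their three analogues for $c$, yields a coupled system of $S$-unit equations in the unknown exponents and bases. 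Following \cite{VukusicZiegler2021}, one applies Matveev-type lower bounds for linear forms in logarithms row-by-row to obtain explicit upper bounds for the exponents, after which finite enumeration isolates, together with the cases handled previously, only the sporadic triples $(2, 4, 14)$ and $(2, 6, 16)$.

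The main obstacle is the interplay between the three unknown common bases $q, r, s$ in the generic case: the Baker-type bounds depend on heights of these bases, which are themselves controlled only through the very equations under study. Handling this requires a careful bootstrapping of $p$-adic and archimedean bounds, of exactly the type carried out in the proof of Theorem~\ref{thm:oldMain_newVer} in \cite{VukusicZiegler2021}, and the technical bulk of the argument will live here.
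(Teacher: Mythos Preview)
Your high-level decomposition (write $b,b+1,b+2,c,c+1,c+2$ in the form $p^\alpha q^i$ and branch on which of the $q$-exponents vanish) matches the paper's setup in Section~\ref{sec:a2_notationParity}. But you have the difficulty located in the wrong place, and this leads to a genuine gap.

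You claim the non-generic branches (where one of $b,c,b+2,c+2$ is a pure power of $2$) reduce to Pillai-type equations $p^u-q^v\in\{1,2\}$. They do not. Suppose for instance $b=2^{y_0}$. Then $b+2=2(2^{y_0-1}+1)$ forces $q_2=2^{y_0-1}+1$, and $b+1=2^{y_0}+1$ forces $q_1=3^{-y_1}(2^{y_0}+1)$; the relation $(c+2)-(c+1)=1$ then becomes
\[
2^{z_2}(2^{y_0-1}+1)^{\gamma_2}-1=3^{z_1-y_1\gamma_1}(2^{y_0}+1)^{\gamma_1},
\]
which is exactly equation~\eqref{eq:myeqs}. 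This has \emph{four} free exponents and two coupled bases, and the paper devotes all of Section~\ref{sec:myeqs} (Matveev for a linear form in four logarithms, a continued-fraction reduction on $\log 3/\log 2$, and an LLL sweep over $3\le x\le 296$) to solving it. This is where the infinite family~\eqref{eq:newFamily} actually emerges, and it is not a two-term Pillai equation.

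Conversely, the branch you call ``generic'' (all of $\beta_0,\gamma_0,\beta_2,\gamma_2\ge 1$) is handled in the paper \emph{without} any Baker machinery: Lemma~\ref{lem:a2_powers} shows by parity, Zsigmondy, and a handful of classical equations (Lemmata~\ref{lem:Stormer}--\ref{lem:powerOf3minus2perfectpowers}) that this case is in fact empty. Your plan to attack it by the bootstrapping of \cite{VukusicZiegler2021} is risky precisely because that argument is formulated for $a\ge 3$; you would need to re-examine it to see that the infinite families do not obstruct the bootstrap when restricted to your generic configuration, and you give no indication of having done so.

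In short: the paper's route is (i) elementary arguments force one of $b,c,b+2,c+2$ to be a power of $2$; (ii) that reduction leads to the specific four-exponent equations~\eqref{eq:myeqs}, which are then solved by linear forms in logarithms plus LLL. Your proposal swaps the roles of (i) and (ii) and underestimates the non-generic branch.
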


The proof will be provided in Section \ref{sec:a2}. It relies on known results on certain Diophantine equations, as well as on the two equations
\begin{equation}\label{eq:myeqs}
	2^y ( 2^x \pm 1)^z \mp 1
	= 3^{r} (2^{x+1} \pm 1)^{w},
\end{equation}
which are solved in Section~\ref{sec:myeqs} using lower bounds for linear forms in logarithms.

For now, let us return to Question \ref{quest:knownYes}. In view of family \eqref{eq:newFamily}, it makes sense to modify it in the following way.

\begin{newquest}\label{quest:new}
Do there exist infinitely many integer triples $(a,b,c)$ with $1<a<b<c$, $(a,b)\neq (2,8)$, and $(a,b,c)\neq (2, 2^x-2, 2^{2x}-2^{x+1})$, such that $(a,b,c)$, $(a+1,b+1,c+1)$ and $(a+2,b+2,c+2)$ are each multiplicatively dependent?
\end{newquest}

We will not be able to answer this question completely. However, let us have a look at the triples in the range $1<a<b<c\leq 1000$ that are included in Question~\ref{quest:new}. A quick computer search reveals the following seven triples $(a,b,c)$:
\begin{equation}\label{eq:smalltriples}
	(2, 4, 14), (3, 6, 48), (6, 8, 48), (6, 18, 48), (6, 30, 216), (7, 15, 49), (7, 49, 79), (8, 32, 98).
\end{equation}
Note that the triples $(2, 4, 14), (6, 30, 216),  (7, 15, 49), (7, 49, 79), (8, 32, 98)$ all have the property that only two of the entries contribute to the multiplicative dependence. We call such triples 2-multiplicatively dependent. A more common way to express this might be to speak about the multiplicative rank of the triple;  for simplicity, we use the following terminology, as introduced in \cite{VukusicZiegler2021}.

\begin{defi}
We say that an $n$-tuple $(a_1, \ldots , a_n)$ is $k$-multiplicatively dependent if there is a multiplicatively dependent 
$k$-subtuple 
$(a_{i_1}, \ldots , a_{i_k})$ and every 
$(k-1)$-subtuple $(a_{j_1}, \ldots , a_{j_{k-1}})$
is not multiplicatively dependent.
\end{defi}

Examining the first triple from \eqref{eq:smalltriples}, we observe that $(2,4,14)$ is 2-multiplicatively dependent, while its two associated triples,  $(3,5,15)$ and $(4,6,16)$, are  3-multiplicatively dependent\ and  2-multiplicatively dependent, respectively. In fact, for each triple $(a,b,c)$ in \eqref{eq:smalltriples}, we always have a similar ``mixed situation''. In other words, in our examples, $(a,b,c)$, $(a+1,b+1,c+1)$ and $(a+2,b+2,c+2)$ are never all 2-multiplicatively dependent\ or all 3-multiplicatively dependent While we don't know how to find or prove the absence of three consecutive 3-multiplicatively dependent\ triples, the question about three consecutive 2-multiplicatively dependent\ triples translates to tractable Diophantine equations. In fact, in \cite[Problem 2]{VukusicZiegler2021}, the following problem was posed.

\begin{knownprobl}\label{probl:solveEquations}
Solve the Diophantine equations
\begin{align*}
	((d^x + 1)^s + 1)^t - d^y  &= 2,\\
	(d^x + 1)^s - (d^y - 1)^m  &= 2,\\
	d^y - ((d^x - 1)^s - 1)^t  &= 2,
\end{align*}
and thus find all integers $1<a<b<c$ such that $(a,b,c)$, $(a+1,b+1,c+1)$ and $(a+2,b+2,c+2)$ are each 2-multiplicatively dependent
\end{knownprobl}

This is exactly what we will do in the second part of this paper (Section~\ref{sec:problem_equations} and \ref{sec:proof_3x2md}).

\begin{newthm}\label{thm:3x2md}
Let $(a,b,c)$ be a triple of integers with $1<a<b<c$, and the property that $(a,b,c)$, $(a+1,b+1,c+1)$ and $(a+2,b+2,c+2)$ are each 2-multiplicatively dependent. Then
\begin{equation}\label{eq:3x2-md}
	(a,b,c) \in \{
		(2,6,8),
		(2, 8, 2^x - 2),
		(2,8,10^y - 2)
	\},
\end{equation}
where $x \geq 4$ and $y \geq 2$ are integers.
\end{newthm}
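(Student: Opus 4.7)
The plan has two parts: a case analysis that reduces Theorem~\ref{thm:3x2md} to the equations in Problem~\ref{probl:solveEquations}, and then the resolution of those equations.

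For the case analysis, for each of the three triples $(a+j,b+j,c+j)$, $j\in\{0,1,2\}$, the hypothesis singles out one of the three pairs of positions as multiplicatively dependent, giving $3^3=27$ patterns to consider. Recall that two integers $u<v$ greater than $1$ are multiplicatively dependent iff they are both powers of some common integer $g\ge 2$, say $u=g^p$ and $v=g^q$ with $1\le p<q$. For each pattern I would introduce such a parametrisation (with possibly different bases in the three rows) and then eliminate it by combining: Mih\u{a}ilescu's theorem whenever the same coordinate is forced to be a proper power in two consecutive rows; classical results on equations of the type $d^x+1=(d+1)^l$ that arise when a coordinate and its successor are both nontrivial powers to bases differing by $1$; parity and divisibility arguments among $a,a+1,a+2$; and elementary size bounds coming from $a<b<c$. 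The outcome of this bookkeeping is that only a handful of patterns survive nontrivially, and in each surviving pattern the constraints reduce, after substitution and renaming, to one of the three displayed equations of Problem~\ref{probl:solveEquations}, plus at most finitely many small sporadic candidates which can be checked directly (these account for $(2,6,8)$).

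The main work is then to solve, in positive integer variables,
\[
((d^x+1)^s+1)^t - d^y = 2, \qquad (d^x+1)^s - (d^y-1)^m = 2, \qquad d^y - ((d^x-1)^s - 1)^t = 2.
\]
I would first dispose of all degenerate subcases where one of the exponents $s,t,m$ equals $1$: here the equations collapse either to relations between two perfect powers differing by $1$ (handled by Mih\u{a}ilescu) or to equations of shape $d^y\pm d^x = \mathrm{const}$, dealt with by $\gcd$ considerations and a bounded search. For the genuine multi-parameter regime the strategy mirrors that used for~\eqref{eq:myeqs} in Section~\ref{sec:myeqs}: rewrite each equation so that a small quantity on one side is bounded below by a nontrivial linear form in two or three logarithms (of $d$, of $d^x\pm 1$, and of an auxiliary base), invoke Matveev's (or Laurent's) lower bound to obtain a polynomial upper bound on the exponents in terms of $\log d$, and finish with an LLL-based reduction to clear the remaining finite search.

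The main obstacle is precisely this last step: each of the three equations carries three or four integer exponents on top of the base $d$, so care is needed to balance the linear-form estimates against the natural growth of the left-hand side in order to ensure that, after reduction, only the small solutions with $d=2$ (corresponding to $c=2^x-2$ and $c=10^y-2$) survive. Once all solutions of the three equations are in hand, reassembling them with the sporadic candidates from the case analysis produces exactly the triples $(2,6,8)$, $(2,8,2^x-2)$ with $x\ge 4$, and $(2,8,10^y-2)$ with $y\ge 2$, completing the proof of Theorem~\ref{thm:3x2md}.
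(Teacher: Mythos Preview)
Your reduction to the three equations of Problem~\ref{probl:solveEquations} is broadly in the right spirit, but note two things about the case analysis. First, the paper separates out at the start the subcases $\{2,8\}\subset\{a,b,c\}$ and $8\in\{a,a+1,b,b+1,c,c+1\}$; the former is where \emph{all} of the listed triples actually arise (including $(2,6,8)$), and the latter is needed because Mih\u ailescu's theorem has the exceptional pair $(8,9)$, so one cannot freely conclude ``same coordinate is a proper power in two consecutive rows $\Rightarrow$ contradiction'' without first dealing with that. Second, once the exponent constraints $s,t,m,y\ge 2$ are in force, the three equations of Problem~\ref{probl:solveEquations} turn out to have \emph{no} solutions at all; the families $(2,8,2^x-2)$ and $(2,8,10^y-2)$ do not correspond to $d=2$ in those equations as your last paragraph suggests.

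The real gap is your plan for the equations themselves. The strategy of Section~\ref{sec:myeqs} does not transplant: there the bases $2^x\pm 1$ and $2^{x+1}\pm 1$ are both close to powers of $2$, which is precisely what produces the auxiliary two-term form $\Lambda'$ in $\log 2$ and $\log 3$ and hence a bound on $x$; after that, LLL is run over a \emph{finite} range $3\le x\le 296$. In Problem~\ref{probl:solveEquations} the base $d$ is a free parameter with no comparable structure, and a Matveev bound applied to the obvious linear form only yields exponents bounded polynomially in $\log d$; you would then need an LLL reduction for every $d\ge 2$, which is not a finite computation. The paper's route is completely different and essentially elementary on the surface: a parity argument pins down $d$ as odd or even, the subcase where the relevant exponent is even is of the shape $X^n-Y^2=2$ and is killed by Nagell (Theorem~\ref{thm:Nagell}), and the remaining odd-exponent subcases are handled either by a short comparison of $2$-adic valuations or, in the hard subcases of Propositions~\ref{prop:eq50} and~\ref{prop:eq49}, by recognising the shape $X^2-2=Y^n$ and invoking Proposition~\ref{prop:main}. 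That proposition (bounding the exponent by $1237$ and forcing $Y\equiv -1\pmod Q$ for a $Q>10^{102}$) is where the linear-forms machinery is actually spent, and its output is combined with valuation estimates---and, for equation~\eqref{eq:49}, a finite check of the square divisors of $2^{t-1}-1$ for the admissible primes $t$---to reach a contradiction.
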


As an ingredient in our proof, we will need rather strong bounds on the variables of the Diophantine equation 
\[
	x^2 - 2 = y^n. 
\]
This equation has been studied intensively, but is still not solved completely. We will collect some results and derive a small bound on the exponent $n$ in Section \ref{sec:x2-2} through application of Laurent's lower bound for linear forms in two logarithms.

In the next section, we collect some preliminary results. 
In Section~\ref{sec:myeqs}, we solve the two equations \eqref{eq:myeqs}, which we then use to prove Theorem~\ref{thm:a2} in Section~\ref{sec:a2}.
The equation $x^2 - 2 = y^n$ is investigated in Section~\ref{sec:x2-2}, and we solve the equations from Problem~\ref{probl:solveEquations} in Section~\ref{sec:problem_equations}. Finally, in Section~\ref{sec:proof_3x2md}, we prove Theorem~\ref{thm:3x2md}.

\section{Preliminaries}\label{sec:prelim}

Here, we collect various known results, which will be used later in the paper. We start with two results on consecutive multiplicatively dependent pairs of integers, then list several results on certain Diophantine equations and  recall a number of elementary lemmata related to divisibility. Finally, we state some lower bounds for linear forms in logarithms.

\subsection{Consecutive multiplicatively dependent pairs}

\begin{knownlem}[{\cite[Theorem 1]{VukusicZiegler2021}}]\label{lem:consecutivePairs}
The only integers $1 < a < b$, such that $(a, b)$ and $(a +1, b +1)$
are both multiplicatively dependent
are $a=2$ and $b=8$.
\end{knownlem}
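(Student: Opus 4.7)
The plan is to reduce the lemma to two applications of Mih\u{a}ilescu's theorem. By the characterization of multiplicatively dependent integer pairs recalled in the introduction, the hypothesis is equivalent to the existence of integers $q, r \geq 2$ and positive integers $x < y$, $u < v$ with $a = q^x$, $b = q^y$, $a + 1 = r^u$, $b + 1 = r^v$. In particular $y \geq 2$ and $v \geq 2$, and the task reduces to solving the simultaneous system
\[
q^x + 1 = r^u, \qquad q^y + 1 = r^v.
\]

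My main tool is Mih\u{a}ilescu's theorem, which states that $3^2 - 2^3 = 1$ is the unique representation of $1$ as $A^m - B^n$ with integers $A, B \geq 2$ and $m, n \geq 2$. First I would rule out the case $u \geq 2$. Then $v \geq 3$, and applying Mih\u{a}ilescu to $r^v - q^y = 1$ (valid since $v, y \geq 2$) forces $(r, v, q, y) = (3, 2, 2, 3)$, contradicting $v \geq 3$. Hence $u = 1$, so $r = a + 1$, and the equation for $b + 1$ becomes $(a+1)^v - q^y = 1$ with $v \geq 2$. A second application of Mih\u{a}ilescu---now with $a + 1 \geq 3$, $q \geq 2$, and $v, y \geq 2$---yields $(a + 1, v, q, y) = (3, 2, 2, 3)$. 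Hence $a = 2$ and $b = q^y = 8$, as claimed, and one checks directly that both $(2, 8)$ and $(3, 9)$ are indeed multiplicatively dependent.

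The main obstacle is entirely encapsulated in the invocation of Mih\u{a}ilescu's theorem; the rest of the argument is straightforward case analysis on the exponents. I should also verify that the parametrization $a = q^x$, $b = q^y$ is genuinely forced by multiplicative dependence---this is the ``pairs'' characterization stated in the introduction and ultimately boils down to the observation that $a^k = b^{\ell}$ with $\gcd(k, \ell) = 1$ implies that $a$ and $b$ are both powers of a common integer.
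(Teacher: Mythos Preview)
Your proof is correct. The paper does not give its own proof of this lemma---it is stated as a known result cited from \cite{VukusicZiegler2021}---but the introduction sketches precisely your approach: reduce to the parametrization $(a,b)=(q^x,q^y)$, $(a+1,b+1)=(r^u,r^v)$ and appeal to Mih\u{a}ilescu's theorem. Your two-step application of Mih\u{a}ilescu (first to force $u=1$, then to pin down $(a+1,v,q,y)=(3,2,2,3)$) is exactly the argument the authors allude to, and all hypotheses of Theorem~\ref{thm:Catalan} are verified at each invocation.
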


\begin{knownlem}[{\cite[Theorem 3]{VukusicZiegler2021}}]\label{lem:consecutivePairs2}
There exist no integers $1 < a < b$, such that $(a, b)$ and $(a +2, b +2)$
are both multiplicatively dependent.
\end{knownlem}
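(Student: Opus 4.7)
The plan is to parametrize and derive a contradiction. By multiplicative dependence, write $a = q^x$, $b = q^y$ for some integer $q \ge 2$ and $1 \le x < y$, and similarly $a + 2 = Q^u$, $b + 2 = Q^v$ with $Q \ge 2$ and $1 \le u < v$.

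I would first dispose of $q = 2$ and $Q = 2$ by a $2$-adic valuation argument. If $q = 2$ and $x \ge 2$, then $v_2(a + 2) = v_2(b + 2) = 1$ forces $u \cdot v_2(Q) = v \cdot v_2(Q) = 1$, so $u = v = 1$, contradicting $u < v$; the remaining subcases ($q = 2,\ x = 1$; and $Q = 2$) are handled analogously. Henceforth $q, Q \ge 3$ are both odd, and any common prime factor of $q$ and $Q$ would divide $Q^u - q^x = 2$, hence $\gcd(q, Q) = 1$.

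Subtracting the dependence relations gives $q^x(q^{y-x} - 1) = Q^u(Q^{v-u} - 1)$, and coprimality forces $Q^u \mid q^{y-x} - 1$ and $q^x \mid Q^{v-u} - 1$. Writing $q^{y-x} - 1 = K Q^u$ and $Q^{v-u} - 1 = L q^x$ and substituting, one checks that $K = L$, and hence
\[
q^{y-x} = K q^x + 2K + 1, \qquad Q^{v-u} = K q^x + 1.
\]
The case $K = 1$ would require $Q^u - Q^{v-u} = 1$, which is impossible for $Q \ge 3$ (any two positive powers of $Q$ differ by at least $Q - 1 \ge 2$). Hence $K \ge 2$, and the inequalities $q^{y-x} > q^x$, $Q^{v-u} > Q^u$ yield $y \ge 2x + 1$ and $v \ge 2u + 1$.

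For the final step I would combine two congruence bounds with a lower bound for a linear form in logarithms. From $Q^u \equiv 2 \pmod{q^x}$ and $(Q^{v-u})^u = (Q^u)^{v-u} \equiv 2^{v-u} \pmod{q^x}$, together with $Q^{v-u} \equiv 1 \pmod{q^x}$, one obtains $2^{v-u} \equiv 1 \pmod{q^x}$, hence $q^x \le 2^{v-u} - 1$; symmetrically, $(-2)^{y-x} \equiv 1 \pmod{Q^u}$ yields $Q^u \le 2^{y-x} + 1$. Applying a lower bound for the linear form
\[
0 < v \log Q - y \log q = \log\!\bigl(1 + 2 q^{-y}\bigr) < 2 q^{-y}
\]
(via Laurent's theorem or Baker--W\"ustholz) then gives effective absolute bounds on $x, y, u, v, q, Q$, reducing the problem to a finite computation. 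The main obstacle is this last step: the isolated Pillai equation $Q^u - q^x = 2$ can have many solutions (consider twin primes with $u = x = 1$), so the contradiction emerges only from the simultaneous presence of both $(u,x)$ and $(v,y)$ sharing the bases $q,Q$; the bounds $y \ge 2x+1$, $v \ge 2u+1$, the congruence constraints on $q^x$ and $Q^u$, and the logarithmic estimate combine to rule out any such configuration.
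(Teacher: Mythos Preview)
The paper does not prove this lemma at all; it is quoted verbatim as Theorem~3 of Vukusic--Ziegler and used as a black box. So there is no ``paper's proof'' to compare against here. That said, two remarks on your write-up are worth making.

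First, a much shorter argument is available, and it uses a result that the paper itself cites (Theorem~\ref{thm:bennett2001-t2}, i.e.\ Bennett 2001, Theorem~1.5). Once you have $a=q^x$, $b=q^y$, $a+2=Q^u$, $b+2=Q^v$ with $q,Q\ge 2$ and $(u,x)\neq(v,y)$, you have produced two distinct positive-integer solutions of $Q^m-q^n=2$ with fixed bases $Q,q\ge 2$; Bennett's theorem says there is at most one. Everything after your first paragraph is then unnecessary. This is almost certainly how the cited paper handles it as well.

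Second, your final step as written does not close. A lower bound for the single linear form $\Lambda=v\log Q-y\log q$ yields an inequality of the shape $y\log q\ll \log q\cdot\log Q\cdot\log\max(v,y)$, hence $y\ll \log Q\cdot\log\max(v,y)$. But your congruence bound only gives $Q^u\le 2^{y-x}+1$, so with $u\ge 1$ you get $\log Q\lesssim y\log 2$, which feeds back into $y\ll y\cdot\log\max(v,y)$ --- no contradiction. To make a linear-forms approach work here you must exploit the second relation $Q^u-q^x=2$ as well, for instance by forming $v(u\log Q-x\log q)-u(v\log Q-y\log q)=(uy-vx)\log q$ and bounding it by $O(v/q^x)$; this forces $q^x\ll v$ (when $uy\neq vx$) and then the remaining quantities collapse. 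In other words, you end up re-proving a version of the Pillai-type statement that Bennett's theorem already supplies. Your sketch correctly identifies this as ``the main obstacle'' but underestimates how much extra structure is needed beyond the one inequality $|\Lambda|<2q^{-y}$.

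A minor point: ruling out $q=2$ (and $Q=2$) is not the same as ruling out even $q$; you need the $2$-adic argument to cover, e.g., $q=6$. The extension is routine, but the sentence ``Henceforth $q,Q\ge 3$ are both odd'' deserves one more line of justification.
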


\subsection{Some Diophantine equations}

\begin{knownthm}[Mih\u{a}ilescu, 2002  \cite{Mihailescu2004}]\label{thm:Catalan}
The only solution to the equation
\[
	a^x - b^y = 1,
	\quad \text{with } a,x,b,y \in \Z_{\geq 2}
\]
is $(a,x,b,y) = (3,2,2,3)$.
In other words, the only consecutive perfect powers are 8 and 9.
\end{knownthm}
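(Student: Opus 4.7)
The plan is to perform several elementary reductions and then invoke the main theorem of \cite{Mihailescu2004}. First, if $a^x - b^y = 1$ with $x$ composite, we write $x = p x'$ for a prime divisor $p$ of $x$ and replace $(a, x)$ by $(a^{x'}, p)$; likewise for $y$. So it suffices to treat the case where both exponents are prime. The subcase $x = y = 2$ gives $(a-b)(a+b) = 1$, which is impossible since $a, b \geq 2$, so at least one exponent is an odd prime.

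Next I would dispose of the remaining small exponent cases. When $y = 2$, the equation $a^x - b^2 = 1$ was treated by Lebesgue in 1850 via elementary factorization in $\Z[i]$ and shown to have no solutions with $a, x \geq 2$. When $x = 2$, the equation $a^2 - b^y = 1$ with $y$ an odd prime was resolved by Ko Chao in 1965 by a clever descent using the factorization $(a-1)(a+1) = b^y$; this is the branch that isolates the sole solution $(a,x,b,y) = (3,2,2,3)$. These two reductions leave the hard case where $x$ and $y$ are both odd primes, which I henceforth denote $p$ and $q$.

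For this remaining case I would invoke Cassels's 1960 relations showing $p \mid b$ and $q \mid a$, from which useful algebraic factorizations of $a^p - 1$ and $b^q + 1$ in $\Z[\zeta_p]$ and $\Z[\zeta_q]$ become available. The core of the argument is Mihailescu's: first derive the ``double Wieferich'' congruences $p^{q-1} \equiv 1 \pmod{q^2}$ and $q^{p-1} \equiv 1 \pmod{p^2}$, and then use Stickelberger's theorem together with Thaine's theorem on the real cyclotomic field $\Q(\zeta_p)^{+}$ to produce an annihilator of a certain obstruction module in the minus part of the class group, ultimately contradicting the simultaneous validity of those two congruences.

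The main obstacle, by a very wide margin, is this last cyclotomic step: translating the Diophantine condition into a statement about the structure of class groups of $\Q(\zeta_p)$ and then pushing it through Stickelberger and Thaine requires machinery well beyond what can honestly be sketched in a paragraph. In practice I would not attempt to reproduce it and would follow the authors in citing \cite{Mihailescu2004} directly.
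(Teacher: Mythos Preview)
Your sketch of the history and structure of the proof (Lebesgue for $y=2$, Ko Chao for $x=2$, Cassels's relations, and then Mih\u{a}ilescu's cyclotomic argument via Stickelberger and Thaine) is accurate, and you rightly identify that the cyclotomic step is the heart of the matter and not something to reproduce here. The paper, however, does not attempt any of this: the theorem appears in the preliminaries as a cited result with no proof whatsoever, simply referencing \cite{Mihailescu2004}. So your proposal is strictly more informative than what the paper actually contains, but both ultimately defer to the same source; in that sense you are aligned with the authors' treatment.
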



\begin{knownthm}[Nagell, 1954 \cite{Nagell1954}]\label{thm:Nagell}
The equation
\[
	x^n - y^2 = 2
\]
has exactly one solution in positive integers $x,y$ and $n\geq 2$, which is given by $(x,y,n)=(3,5,3)$.
\end{knownthm}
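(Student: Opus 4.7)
The plan is to use classical algebraic number theory in the ring $\Z[\sqrt{-2}]$, which is a Euclidean domain (hence a UFD) with unit group $\{\pm 1\}$. First I would dispose of the case when $n$ is even: writing $n = 2m$ turns the equation into $(x^m - y)(x^m + y) = 2$, which is impossible in positive integers because the two factors are positive, of the same parity, and differ by $2y > 0$. If $n$ is odd and composite, picking any odd prime $p \mid n$ and replacing $x$ by $x^{n/p}$ reduces us to the case $n = p$ an odd prime.

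So assume $n = p$ is an odd prime. A parity check shows that $x$ and $y$ must both be odd, since an even $y$ would force $x^p \equiv 2 \pmod 4$. I would then factor the equation in $\Z[\sqrt{-2}]$ as
\[
	(y + \sqrt{-2})(y - \sqrt{-2}) = x^p.
\]
The two factors are coprime: any common prime divisor would have to divide $2\sqrt{-2}$, hence be associate to the prime $\sqrt{-2}$, forcing $\sqrt{-2} \mid y$ and thus $y$ even. Unique factorization, together with the fact that the units $\pm 1$ are $p$-th powers (as $p$ is odd), then gives
\[
	y + \sqrt{-2} = (a + b\sqrt{-2})^p
\]
for some $a, b \in \Z$. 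Taking norms yields $x = a^2 + 2b^2$.

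Expanding by the binomial theorem and comparing the coefficient of $\sqrt{-2}$ yields
\[
	1 \;=\; b \sum_{k=0}^{(p-1)/2} \binom{p}{2k+1}(-2)^k a^{p-1-2k} b^{2k},
\]
forcing $b \in \{\pm 1\}$ and hence $x = a^2 + 2$. For $p = 3$ the relation collapses to $b(3a^2 - 2b^2) = 1$, giving $b = 1$, $a^2 = 1$, so $y = a^3 - 6a = \mp 5$ and $x = 3$; taking $y$ positive yields the asserted solution $(x, y, n) = (3, 5, 3)$.

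The main obstacle is to rule out $p \geq 5$ an odd prime, where the polynomial identity
\[
	\pm 1 \;=\; \sum_{k=0}^{(p-1)/2}\binom{p}{2k+1}(-2)^k a^{p-1-2k}
\]
must be shown to have no integer solution $a$. Reducing modulo $p$ kills every term except $k = (p-1)/2$ and yields only the Euler-type restriction that $-2$ be a quadratic residue modulo $p$, which does not by itself exclude all $p \geq 5$. To finish, I would combine this with a $2$-adic valuation analysis of the remaining terms together with an archimedean size comparison showing that the leading term $p\, a^{p-1}$ dominates the rest whenever $|a| \geq 2$, reducing matters to the finitely many cases $|a| \leq 1$, which are handled by direct substitution. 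Carrying out this $2$-adic and archimedean balancing cleanly for every odd prime $p \geq 5$ is the delicate step, and is the reason Nagell's original proof is nontrivial.
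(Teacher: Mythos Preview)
The paper does not prove this statement; it is quoted as a classical result of Nagell and used as a black box, so there is no proof in the paper against which to compare your outline.

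Your setup is correct and standard: disposing of even $n$, reducing to an odd prime exponent, factoring in the Euclidean domain $\Z[\sqrt{-2}]$, deducing $b=\pm 1$, and handling $p=3$ all work exactly as you say. The gap is precisely where you flag it, and your proposed strategy for $p\geq 5$ does not work as stated. The claim that the leading term $p\,a^{p-1}$ dominates the remaining terms once $|a|\geq 2$ is false: for $p=7$ and $a=2$ the polynomial $7a^{6}-70a^{4}+84a^{2}-8$ has leading term $448$ while the next term alone is $-1120$. The value there happens to be $-344\neq\pm 1$, but not for the reason you give, and no uniform size comparison of this crude type can succeed. What is really at stake is showing that the Lucas-type quotient $(\alpha^{p}-\bar\alpha^{p})/(\alpha-\bar\alpha)$ with $\alpha=a+\sqrt{-2}$ is never $\pm 1$ for $p\geq 5$; Nagell's argument for this step uses a delicate combination of an algebraic identity with a mod-$p$ and $2$-adic analysis tailored to this sequence, not a dominance estimate. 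As written, your proposal is an honest outline that correctly isolates the difficulty but does not resolve it.
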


\begin{knownthm}[Bennett, 2001 {\cite[Theorem 1.5]{Bennett2001}}]\label{thm:bennett2001-t2}
For any fixed integers $a\geq 2,b\geq 2$ the equation
\begin{equation}\label{eq:bennet2001}
	a^x - b^y = 2
\end{equation} 
has at most one solution $(x,y)$ in positive integers.
\end{knownthm}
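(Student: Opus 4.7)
The plan is to argue by contradiction: assume $(x_1,y_1)$ and $(x_2,y_2)$ are two distinct positive integer solutions with $x_1<x_2$. Since the map $x\mapsto a^x-b^y$ is monotonic in $x$ for fixed $y$, this forces $y_1<y_2$ as well. Subtracting the two instances of \eqref{eq:bennet2001} yields the Pillai-type relation
\[
a^{x_1}\bigl(a^{x_2-x_1}-1\bigr)=b^{y_1}\bigl(b^{y_2-y_1}-1\bigr),
\]
from which a $p$-adic analysis (for any prime dividing $a$ or $b$) together with lifting-the-exponent gives sharp divisibility conditions relating $x_1,y_1$ to the gaps $X:=x_2-x_1$ and $Y:=y_2-y_1$. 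A simple archimedean size comparison then forces $X\log a \approx Y\log b$, i.e.\ the second solution is essentially a multiplicative ``near-copy'' of the first.

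Next, I would set up the two linear forms
\[
\Lambda_i \;=\; x_i\log a \;-\; y_i\log b, \qquad i=1,2,
\]
and exploit that $a^{x_i}-b^{y_i}=2$ gives $|\Lambda_i|\ll b^{-y_i}$ by the usual $\log(1+t)=t+O(t^2)$ estimate. Taking an appropriate $\Z$-linear combination of $\Lambda_1$ and $\Lambda_2$, guided by the divisibility information from the previous step, produces a \emph{nonzero} linear form $\Lambda$ in the two logarithms $\log a, \log b$ whose absolute value is exponentially small in $\max(x_2,y_2)$. Applying a sharp lower bound for linear forms in two logarithms (Laurent--Mignotte--Nesterenko, or Laurent's refinement) gives a lower bound of the shape $\log|\Lambda|\ge -C(\log a)(\log b)(\log\max(x_2,y_2))^2$, which when combined with the upper bound yields an absolute upper bound on $\max(x_2,y_2)$ in terms of $a$ and $b$.

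The main obstacle, and the reason this goes beyond a routine Baker-style application, is that the resulting bounds are comfortable only when $a$ and $b$ are not too small and not too multiplicatively related. For the residual small-parameter range, I would invoke the \emph{hypergeometric method}: Pad\'e approximants to the binomial function $(1-z)^{1/n}$ produce effective irrationality measures for $\sqrt[n]{a/b}$ that are significantly stronger than those coming from general transcendence theory. Translating a hypothetical second solution into a very good rational approximation to $\sqrt[n]{a/b}$ for suitable $n$ contradicts these measures. The hardest part is therefore the delicate construction and $p$-adic analysis of the hypergeometric approximants, including sharp estimates for the denominators of the binomial coefficients that arise — this is precisely the technical heart of Bennett's 2001 paper.

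Finally, I would sweep the unavoidable finite range of small $(a,b,x,y)$ by a direct computer verification using, e.g., congruence obstructions modulo small primes for each candidate $(a,b)$ with $a,b\le B$, to confirm no two solutions coexist. Assembling the transcendence bound, the hypergeometric bound, and the finite check yields the theorem.
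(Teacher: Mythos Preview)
The paper does not prove this statement at all: it is listed in the Preliminaries section as a known result, cited verbatim as \cite[Theorem~1.5]{Bennett2001}, and used later as a black box (in the proof of Proposition~\ref{prop:eq50}, Case~1). There is therefore no ``paper's own proof'' to compare your proposal against.

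That said, your sketch is a reasonable outline of the strategy actually used in Bennett's original 2001 paper: a combination of lower bounds for linear forms in two logarithms, the hypergeometric method for effective irrationality measures, and a finite computational check. So while your write-up is not a complete proof (each of the three ingredients requires substantial technical work, and you have only indicated the shape of the argument), it correctly identifies the architecture of the proof in the cited reference. For the purposes of the present paper, however, no proof is required --- the result is simply quoted.
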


\begin{knownlem}[Ljunggren, 1942 \cite{Ljunggren1942}, {St\o rmer, 1899 \cite[p.\ 168]{Stormer1899}}]\label{lem:Stormer}
The only solution to the equation
\[
	x^2 + 1 = 2y^n
\]
in integers $x \geq 2$, $y\geq 1$ and $n\geq 3$ is given by $(x,y,n) = (239,13,4)$.
\end{knownlem}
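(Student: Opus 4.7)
The plan is to reduce to two critical cases according to the shape of $n$. If $n$ has an odd prime factor $p$, then $(x,\,y^{n/p},\,p)$ is a solution of the same shape, so we may assume $n=p$ is an odd prime. Otherwise $n$ is a power of $2$; since $n\ge 3$, writing $n=2^k$ with $k\ge 2$ and replacing $y$ by $y^{2^{k-2}}$ reduces us to the case $n=4$. It therefore suffices to solve $x^2+1=2y^n$ in positive integers $x\ge 2$ for (i) $n$ an odd prime, and (ii) $n=4$.

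For case (i), I would work in the Gaussian integers $\Z[i]$. Since $x^2+1$ is even, $x$ must be odd, and then reducing modulo $4$ forces $y$ to be odd too. Factoring $(x+i)(x-i)=-i(1+i)^2 y^n$ and noting that $\gcd(x+i,\,x-i)=(1+i)$, one obtains $x+i = u(1+i)\beta^n$ for some unit $u\in\{\pm 1,\pm i\}$ and some $\beta=a+bi\in\Z[i]$. Because $\gcd(n,4)=1$ when $n$ is odd, every unit is an $n$-th power of a unit, and $u$ can be absorbed into $\beta$, giving $x+i=(1+i)(a+bi)^n$. Equating imaginary parts yields the Thue equation
\[
\mathrm{Re}\bigl((a+bi)^n\bigr) + \mathrm{Im}\bigl((a+bi)^n\bigr) = 1.
\]
For $n=3$ this factors as $(a-b)(a^2+4ab+b^2)=1$, which forces $(a,b)\in\{(1,0),(0,-1)\}$ and hence $x=\pm 1$, excluded by $x\ge 2$. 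For general odd prime $n\ge 5$ the same strategy applies: the equation is a Thue equation of degree $n$ which, combined with congruence restrictions modulo small primes (as in the classical analysis of St\o rmer), can be shown to have only the trivial solution $x=\pm 1$.

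For case (ii) we face Ljunggren's famous equation $x^2+1=2y^4$. The same factorization gives $x+i = (1+i)\beta^4$, but now $n=4$ is not coprime to the order of the unit group of $\Z[i]$, so the unit ambiguity cannot be removed, and the resulting quartic analysis is much more delicate. Ljunggren's original approach relates the equation to the Pell family $X^2-2Y^4=\pm 1$ and studies the convergents of $\sqrt 2$ via continued fractions together with auxiliary computations in $\Q(\sqrt 2)$; a more modern route applies a lower bound for linear forms in two logarithms (as in work of Mignotte and others), combined with LLL-based numerical reduction of the resulting upper bound, to rule out all positive solutions except $(x,y)=(1,1)$ and $(239,13)$. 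The restriction $x\ge 2$ selects the stated solution $(x,y,n)=(239,13,4)$.

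The main obstacle is case (ii): the odd-prime case is essentially routine Gaussian-integer and Thue-equation bookkeeping, whereas Ljunggren's equation is the genuine depth of this lemma and requires either the intricate 1942 class-field-theoretic argument of Ljunggren or substantial modern Diophantine-approximation machinery to handle.
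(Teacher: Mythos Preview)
The paper does not prove this lemma at all; it is quoted as a known result with citations to St\o rmer (1899) and Ljunggren (1942), and no argument is supplied. So there is nothing in the paper to compare against beyond those references.

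Your outline is a sensible \emph{plan}, and several pieces are carried out correctly: the reduction to $n$ an odd prime or $n=4$ is valid (and you implicitly use that $13$ is not a perfect power when lifting back from $n=4$); the Gaussian-integer factorisation, the absorption of the unit for odd $n$ via $\gcd(n,4)=1$, and the derivation of the Thue equation $\mathrm{Re}((a+bi)^n)+\mathrm{Im}((a+bi)^n)=1$ are all right; and the explicit treatment of $n=3$ via $(a-b)(a^2+4ab+b^2)=1$ is complete.

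However, as a proof the proposal has genuine gaps at both substantive cases. For odd primes $n\ge 5$ you simply assert that the degree-$n$ Thue equation ``can be shown to have only the trivial solution'' by congruence restrictions ``as in the classical analysis of St\o rmer''; no such uniform congruence argument is given, and none exists that handles all odd primes simultaneously in the way you suggest---St\o rmer's actual reasoning is more specific than a generic Thue-plus-congruences sketch. For $n=4$ you openly defer to Ljunggren or to modern linear-forms machinery without carrying anything out. Thus your proposal ultimately reduces back to precisely the two references the paper already cites, and does not stand on its own as a proof.
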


\begin{knownlem}\label{lem:two_squares_minus_one}
The only solution to the equation
\[
	2x^2 - 1 = y^n
\]
in integers $x\geq 2$, $y\geq 2$, $n\geq 3$ is $(x,y,n) = (78, 23, 3)$.
\end{knownlem}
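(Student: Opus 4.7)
The plan is to first reduce to the case when $n$ is prime. If $n = km$ with $k$ prime and $m \geq 2$, then setting $y' = y^m$ gives $2x^2 - 1 = (y')^k$, so we may assume $n$ itself is prime. The case $n = 2$ is excluded from the hypothesis; however, composite even values of $n$ collapse after this reduction to $n = 2$, giving the Pell-type equation $(y^m)^2 = 2x^2 - 1$ in which $y^m$ must appear, with $m \geq 2$, as a term of the sequence $(Z_k)$ defined by $Z_k + x_k\sqrt{2} = (1+\sqrt{2})^{2k+1}$. I would dispose of this subcase using the Bilu--Hanrot--Voutier primitive divisor theorem applied to $(Z_k)$, ruling out $Z_k$ being a proper perfect power beyond the trivial $Z_0 = 1$ (which forces $y = 1$, excluded).

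For odd prime $n$, $y$ is forced to be odd. I would then work in the PID $\Z[\sqrt{2}]$, with fundamental unit $\varepsilon = 1 + \sqrt{2}$, and factor
\[
	(x\sqrt{2} - 1)(x\sqrt{2} + 1) = y^n.
\]
Each factor has norm $\pm y^n$, which is odd, so in particular the prime $\sqrt{2}$ divides neither factor; since their difference is $-2 = -(\sqrt{2})^2$, the two factors are coprime in $\Z[\sqrt{2}]$. Consequently $x\sqrt{2} - 1 = \eta \alpha^n$ for some unit $\eta$ and some $\alpha \in \Z[\sqrt{2}]$, and because $n$ is odd, $\eta$ may be taken in a finite system of representatives modulo $n$-th powers of units, e.g.\ $\eta \in \{\pm\varepsilon^i : 0 \leq i < n\}$.

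For $n = 3$ there is a clean elementary route: multiplying the original equation by $8$ converts $y^3 = 2x^2 - 1$ into the Mordell equation $Y^2 = X^3 + 8$ via $X = 2y$, $Y = 4x$. The full list of integral points on this curve is classical, namely $(X, Y) \in \{(-2, 0),\, (1, \pm 3),\, (2, \pm 4),\, (46, \pm 312)\}$, and only $(46, \pm 312)$ yields a solution with $y \geq 2$, giving exactly $(x, y) = (78, 23)$. Equivalently, expanding $\alpha^3 = (a + b\sqrt{2})^3$ for each unit class produces a family of cubic Thue equations in $(a, b)$ that recovers the same result.

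The case $n \geq 5$ prime is the main technical hurdle. Writing $\alpha = a + b\sqrt{2}$ and expanding $\eta \alpha^n$ in $\Z[\sqrt{2}]$ gives, on comparing rational and irrational parts, a Thue-type equation of degree $n$ in $(a, b)$, and the resulting sequence attached to $(\alpha, \bar\alpha)$ is essentially Lucas. I would invoke the Bilu--Hanrot--Voutier primitive prime divisor theorem: for all $n \geq 7$, a term of the Lucas sequence in question must admit a primitive prime divisor, and this is incompatible with the term being a pure $n$-th power (as is forced by our factorization). The remaining prime $n = 5$ would be dealt with either by explicitly solving the quintic Thue equations arising from the unit classes, or, paralleling the strategy used in Section~\ref{sec:x2-2} for the companion equation $x^2 - 2 = y^n$, by deriving an effective bound on $n$ through Laurent's lower bound for linear forms in two logarithms and then checking the leftover values by direct computation.
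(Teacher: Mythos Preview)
Your approach differs substantially from the paper's, which disposes of the lemma in two lines by citing \cite[Theorem~1.1]{BennettSkinner2004} (proved via the modular method) for $n\geq 4$ and \cite[Theorem~2]{Cohn1991} for $n=3$.

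Your treatment of $n=3$ via the Mordell curve $Y^2=X^3+8$ is correct and pleasant. The rest of the argument, however, has real gaps.

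For even $n$ (equivalently, $n$ a power of $2$), you reduce to the negative Pell equation $Z^2-2x^2=-1$ and then assert that Bilu--Hanrot--Voutier rules out $Z_k$ being a proper perfect power. But BHV guarantees primitive prime divisors of Lucas or Lehmer terms; it says nothing directly about perfect-power values of such terms. The implication you need is not supplied.

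For odd primes $n\geq 5$, the unit group of $\Z[\sqrt{2}]$ is infinite, so the factorization $x\sqrt{2}-1=\varepsilon^{i}\alpha^{n}$ produces $n$ genuinely distinct unit classes $0\leq i<n$, not one. In the single case $i=0$ one does obtain the Lucas-type condition $\alpha^{n}+\bar\alpha^{n}=2$, to which BHV applies cleanly; but for $i\neq 0$ the equation $\varepsilon^{i}\alpha^{n}+(-1)^{i}\varepsilon^{-i}\bar\alpha^{n}=2$ is no longer a Lucas term, and you give no mechanism for handling these $n-1$ remaining classes. (This is exactly why equations like $Ax^2+B=y^n$ are tractable by BHV in imaginary quadratic fields, where the unit group is finite, and much harder in real quadratic fields.) Your fallback of ``Laurent's lower bound plus computation'' would, by analogy with Section~\ref{sec:x2-2}, yield at best $n\lesssim 10^3$, not a full resolution, and you do not carry it out.

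In short, for $n\geq 4$ you have outlined a strategy rather than a proof; the equation $2x^2-1=y^n$ with $n\geq 4$ genuinely requires the modular method of Bennett--Skinner (or an extensive case analysis you have not provided).
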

\begin{proof}
From \cite[Theorem 1.1]{BennettSkinner2004} (with $C=2$ and the restriction $y=1$) it follows that there are no solutions with $n\geq 4$. For $n=3$, we obtain from \cite[Theorem 2]{Cohn1991} (with $N = 2$) the single solution $(x,y,n) = (78, 23, 3)$.
\end{proof}

\begin{knownlem}\label{lem:23q}
The equation
\begin{equation*}\label{eq:23q}
	2^k - 3^\ell x^n = \pm 1
\end{equation*}
has no solution in integers 
$k \geq 0$, $\ell \geq 0$, $x \geq 2$ and $n \geq 3$.
\end{knownlem}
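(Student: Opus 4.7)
I would peel off the easy boundary cases, split into two equations according to the sign, and in each use $\ell\ge 1$ together with reduction modulo $3$ to fix the parity of $k$. The heart of the argument will be the case $k$ odd, which I plan to attack by a careful factorisation and descent.

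The boundary $k\in\{0,1\}$ is immediate: the left-hand side is too small to admit $3^\ell x^n\in\{0,1,2,3\}$ when $\ell\ge 0$ and $x^n\ge 8$. The case $\ell=0$ reduces to $2^k-x^n=\pm 1$, which Mihailescu's theorem (Theorem~\ref{thm:Catalan}) rules out since its unique solution $3^2-2^3=1$ has $n=2<3$. From now on assume $k\ge 2$ and $\ell\ge 1$, so that reducing modulo $3$ forces $k$ to be even in the $+1$ case and odd in the $-1$ case.

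\smallskip\noindent\textbf{The $+1$ case.} Here $2^k-1=3^\ell x^n$ with $k$ even, and I would use the coprime factorisation $2^k-1=(2^{k/2}-1)(2^{k/2}+1)$. Exactly one factor is divisible by $3$; absorbing $3^\ell$ into it leaves two coprime positive integers whose product is $x^n$, so each is a perfect $n$-th power. In particular one of $2^{k/2}\pm 1$ equals $y^n$ with $n\ge 3$, and Mihailescu applied to $y^n-2^{k/2}=\pm 1$ allows only $y=1$, whence $x=1$, a contradiction.

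\smallskip\noindent\textbf{The $-1$ case.} Now $2^k+1=3^\ell x^n$ with $k$ odd. Absorbing factors of $3$ dividing $x$ into $\ell$, I may assume $\gcd(x,3)=1$, so the lifting-the-exponent lemma gives $\ell=1+v_3(k)$. If $n$ is even and $\ell$ is even, the identity $(3^{\ell/2}x^{n/2})^2-1=2^k$ forces two consecutive powers of $2$ differing by $2$, giving $k=3$ and $x=1$; if $\ell$ is odd, $3z^2=2^k+1$ has no solution modulo $8$ once $k\ge 3$. If $n$ is odd and $3\mid k$, I would write $k=3k'$ and factor $2^{3k'}+1=(2^{k'}+1)(2^{2k'}-2^{k'}+1)$; LTE shows that the two factors share a single power of $3$, and the coprime cofactors must each be $n$-th powers, producing a strictly smaller instance $2^{k'}+1=3^{1+v_3(k')}a^n$ of the same equation. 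Descent on $k$, with base cases $k\in\{3,9\}$ settled by direct inspection, closes this subcase.

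\smallskip\noindent\textbf{The main obstacle} is the residual subcase $n$ odd, $k$ odd, $\gcd(k,3)=1$, where the equation collapses to $(2^k+1)/3=x^n$, a Nagell--Ljunggren equation with base $-2$. The plan is to work in the PID $\Z[\sqrt{-2}]$: setting $z=2^{(k-1)/2}$ so that $2z^2+1=3x^n$, the factorisation $3=(1+\sqrt{-2})(1-\sqrt{-2})$ and coprimality of $1\pm z\sqrt{-2}$ should give $1+z\sqrt{-2}=\pm(1+\sqrt{-2})(u+v\sqrt{-2})^n$ for integers $u,v$. Reading off the coefficient of $\sqrt{-2}$ produces a Thue-type equation $P_n(u,v)-2Q_n(u,v)=\pm 1$; combined with $u^2+2v^2=x$ and the exponential growth $z=2^{(k-1)/2}$, I would expect this to force $u=\pm 1$ and $v=0$, hence $x=1$, a contradiction. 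Should a uniform-in-$n$ Thue analysis resist, a safe fallback is to apply Matveev's lower bound for linear forms in the three logarithms $\log 2,\log 3,\log x$, which bounds $k$ and $n$ effectively and reduces the problem to a finite computer search.
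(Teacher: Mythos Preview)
Your approach is entirely different from the paper's, which disposes of the lemma in one line by invoking \cite[Theorem~1.2]{Bennett2004} with $a=3^\ell$, $b=2^k$ and $y=1$. Your elementary route is attractive and the $+1$ case as well as the $n$ even subcase of the $-1$ case are clean and correct.

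There is, however, a genuine gap. Your descent when $3\mid k$ does \emph{not} close that subcase on its own: writing $k=3k'$ produces a smaller instance $2^{k'}+1=3^{1+v_3(k')}a^n$, but if $3\nmid k'$ (for example $k=15$, $k'=5$) you land squarely in your ``main obstacle'' rather than in a base case. So the subcase $3\mid k$ ultimately reduces to the subcase $\gcd(k,3)=1$, and everything hinges on that.

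For the main obstacle $(2^k+1)/3=x^n$ with $n\ge 3$ odd and $\gcd(k,3)=1$, you do not actually give a proof. The factorisation in $\Z[\sqrt{-2}]$ is correct and leads, as you say, to $(1+\sqrt{-2})(u+v\sqrt{-2})^n=1+z\sqrt{-2}$ and hence to a degree-$n$ Thue equation in $u,v$ together with the side condition that $A+B$ be a power of~$2$. But you now face a \emph{different} Thue equation for every odd $n\ge 3$, and ``I would expect this to force $u=\pm 1$, $v=0$'' is not an argument. A uniform-in-$n$ treatment here is precisely the hard content of the lemma; it is essentially a Nagell--Ljunggren/Lebesgue--Nagell type statement, and your fallback via Matveev plus a finite search, while in principle feasible, is neither carried out nor obviously shorter than simply citing the existing literature. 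In short, the part of the proof that carries all the weight is only sketched, and the sketch does not indicate a mechanism that handles all odd $n$ at once.
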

\begin{proof}
This follows from  \cite[Theorem~1.2]{Bennett2004} with $a=3^\ell$, $b=2^k$, and the restriction $y = 1$.
\end{proof}

\begin{knownlem}\label{lem:powerOf3minus2perfectpowers}
The only solution to the equation
\begin{equation}\label{eq:powerOf3minus2perfectpowers}
	3^k - 2 x^n = \pm 1
\end{equation}
in integers $k \geq 1$, $x \geq 3$, $n \geq 2$
is given by $(k,x,n) = (5, 11, 2)$, where we have
\[
	3^5 - 2 \cdot 11^2 = 1.
\]
\end{knownlem}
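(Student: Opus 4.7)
Since $x^n = (x^{n/p})^p$ for any prime $p \mid n$ (with $x^{n/p} \geq 3$), I may reduce to the case where $n = p$ is prime.

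If $p \geq 3$, the conclusion follows from Theorem~1.2 of \cite{Bennett2004}---the same deep linear-forms-in-logarithms result underlying Lemma~\ref{lem:23q}, but with the roles of the primes $2$ and $3$ interchanged: taking $a = 2$, $b = 3^{k}$ and imposing the restriction $y = 1$ in Bennett's setup rules out every solution of $|2 x^p - 3^k| = 1$ with $x \geq 2$ and $p \geq 3$.

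If $p = 2$, I analyse $3^k - 2 x^2 = \pm 1$ directly. The $-1$ case is impossible modulo $3$ (it would force $x^2 \equiv 2 \pmod 3$), so I focus on $3^k = 2 x^2 + 1$. When $k = 2m$ is even, I factor $(3^m - 1)(3^m + 1) = 2 x^2$ and use $\gcd(3^m - 1, 3^m + 1) = 2$ to split the right-hand side into coprime parts; a parity analysis of $m$ reduces the problem to two subequations. For $m$ odd this produces $3^m = (2t-1)(2t+1)$, where coprimality of the two factors forces each to be a power of $3$ and hence $m = 1$, $x = 2$ (excluded by $x \geq 3$). For $m$ even it produces $(3^{m/2})^2 - 2 v^2 = -1$, whose positive integer solutions form a Pell sequence whose $w$-component is never divisible by $3$ (a quick mod-$3$ check of the recurrence $w_{n+1} = 6 w_n - w_{n-1}$), ruling out $w = 3^{m/2}$ for every $m \geq 2$. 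When $k = 2\ell + 1$ is odd, the substitution $z = 3^{\ell}$ reduces the equation to the generalised Pell equation $3 z^2 - 2 x^2 = 1$, whose positive integer solutions obey the recurrence $z_{n+1} = 10 z_n - z_{n-1}$ with $(z_0, x_0) = (1, 1)$ and $(z_1, x_1) = (9, 11)$. I then show that $z_n$ is a power of $3$ only for $n \in \{0, 1\}$, by invoking the Bilu--Hanrot--Voutier primitive divisor theorem applied to the associated Lucas sequence, which produces a prime divisor of $z_n$ distinct from $3$ for every $n \geq 2$. The case $n = 0$ yields the excluded $(k, x) = (1, 1)$, while $n = 1$ yields the asserted $(k, x) = (5, 11)$.

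The main obstacle is the final step, controlling the power-of-$3$ values in the Pell sequence $(z_n)$; the remaining subcases reduce to elementary congruences, while the case $p \geq 3$ is immediate from Bennett's theorem.
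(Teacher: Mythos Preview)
Your reduction to prime exponent and your treatment of $p\ge 3$ via \cite[Theorem~1.2]{Bennett2004} match the paper exactly, as does the elimination of the sign $-1$ for $p=2$ by reduction modulo~$3$. The divergence is entirely in how the remaining equation $3^{k}-1=2x^{2}$ is handled.

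For $k$ even the paper is quicker: writing $(3^{k/2}-1)(3^{k/2}+1)=2x^{2}$, coprimality modulo odd primes forces $3^{k/2}+1$ to be a square (whence $k\le 2$ by Mih\u{a}ilescu) or twice a square (impossible modulo~$3$). Your further split on the parity of $m=k/2$ reaches the same conclusion but through two extra sub-equations, including the negative Pell $(3^{m/2})^{2}-2v^{2}=-1$; this is correct but avoidable. For $k$ odd, the paper simply cites \cite[Theorem~1.1]{BennettSkinner2004} (a modularity result) to force $k\le 5$, whereas you pass to the Pell-type equation $3z^{2}-2x^{2}=1$ and must show the recurrence sequence $z_{n}$ hits a power of~$3$ only at $n\in\{0,1\}$. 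Your plan to do this via Bilu--Hanrot--Voutier is sound in outline but glosses over real work: $(z_{n})$ is not a Lucas sequence in the standard sense but rather the Lehmer companion sequence $\tilde V_{2n+1}$ for the pair $(\sqrt{3}+\sqrt{2},\sqrt{3}-\sqrt{2})$, so one must route through $\tilde U_{4n+2}=\tilde U_{2n+1}\tilde V_{2n+1}$, observe that a primitive divisor of $\tilde U_{4n+2}$ necessarily exceeds~$3$, and dispose of $n\le 7$ by direct computation. What your approach buys is independence from the Frey-curve machinery behind \cite{BennettSkinner2004}; what it costs is a longer argument and a careful BHV setup that your plan currently understates.
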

\begin{proof}

For $n\geq 3$ the statement follows immediately from \cite[Theorem~1.2]{Bennett2004} with $a = 2$ and $b = 3^k$ and the restriction $y = 1$.

Now let $n = 2$. Considering equation \eqref{eq:powerOf3minus2perfectpowers} modulo 3, we see that the right hand side has to be $+1$, i.e.\ we have the equation $3^k - 1 = 2 x^2$. 
If $k$ is odd, then \cite[Theorem 1.1]{BennettSkinner2004} (with $C = 2$, $x = 3$, $y = -1$) implies that $k\leq 5$. 
If $k$ is even, then  
$$
(3^{k/2}-1)(3^{k/2}+1) = 2  x^2. 
$$
Since $3^{k/2}-1, 3^{k/2}+1$ have no common prime factors $>2$, we get that $3^{k/2}+1$ is either a square or two times a square. If $3^{k/2}+1$ is a square, then, via Theorem \ref{thm:Catalan},  $k \leq 2$. If $3^{k/2}+1$ is twice a square, we obtain a contradiction modulo 3.
It follows that  $k \in \{ 2,3,5 \}$, whence a short check reveals that $k=5$.
\end{proof}

\subsection{Some divisibility lemmata}

The first two lemmata are stated without reference, being  standard and easily verified.

\begin{knownlem}\label{lem:qx-1_qy-1}
Let $a,m,n \in \Z_{\geq 1}$. Then
$a^m-1$ divides $a^n-1$ if and only if $m$ divides $n$.
\end{knownlem}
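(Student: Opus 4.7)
The plan is to use the standard elementary approach based on the division algorithm, handling the two implications separately. For the sufficiency direction, when $m$ divides $n$, say $n = mk$, I would set $X = a^m$ and apply the geometric-sum factorization
\[
	X^k - 1 = (X-1)(X^{k-1} + X^{k-2} + \cdots + 1),
\]
which exhibits $a^m - 1$ as an explicit divisor of $a^n - 1 = X^k - 1$.

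For the necessity direction, I would write $n = mq + r$ with $0 \leq r < m$ via the division algorithm, and split
\[
	a^n - 1 = a^r(a^{mq} - 1) + (a^r - 1).
\]
Since the sufficiency direction (applied to $m$ and $mq$) already gives $a^m - 1 \mid a^{mq} - 1$, combining this with the hypothesis $a^m - 1 \mid a^n - 1$ forces $a^m - 1 \mid a^r - 1$. For $a \geq 2$, the estimate $0 \leq a^r - 1 < a^m - 1$ then leaves $r = 0$ as the only possibility, so $m \mid n$.

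The argument is entirely routine and presents no genuine obstacle; the only caveat worth mentioning is the edge case $a = 1$, for which the statement is vacuous or false depending on the convention adopted for $0 \mid 0$, and which the lemma tacitly excludes by considering $a \geq 2$.
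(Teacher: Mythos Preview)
Your argument is correct and is exactly the standard one. The paper does not actually prove this lemma: it is introduced with the remark that the first two lemmata are ``stated without reference, being standard and easily verified,'' so there is nothing to compare against beyond noting that your approach is the expected elementary one.

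Your observation about $a=1$ is a fair quibble with the lemma's stated hypothesis $a\in\Z_{\geq 1}$: with the usual convention that $0\mid 0$, the ``only if'' direction fails when $a=1$. In the paper this is harmless, since the lemma is applied only with $a=d\geq 2$ (in the proof of Proposition~6.3), but you are right that the statement as written is slightly sloppy.
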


\begin{knownlem}\label{lem:divis_axplus1}
Let $a \in \Z_{\geq 2}$ and $m,n \in \Z_{\geq 1}$. If
$a^m+1$ divides $a^n+1$, then $m$ divides $n$.
\end{knownlem}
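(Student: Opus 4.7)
The plan is to reduce $a^n+1$ modulo $a^m+1$ via the Euclidean algorithm and then use a size comparison to force the remainder to be zero. First I would observe that if $n<m$ then $0<a^n+1<a^m+1$, so divisibility is impossible, hence we may assume $n\geq m$. Write $n=qm+r$ with integers $q\geq 1$ and $0\leq r<m$.

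The key identity is $a^m\equiv -1\pmod{a^m+1}$, which gives
\[
a^n+1=a^{qm+r}+1\equiv (-1)^q a^r+1\pmod{a^m+1}.
\]
The proof then splits on the parity of $q$. If $q$ is even, the congruence reads $a^m+1\mid a^r+1$; but since $0\leq r<m$ and $a\geq 2$, one has $0<a^r+1\leq a^{m-1}+1<a^m+1$, which is a contradiction. If $q$ is odd, the congruence becomes $a^m+1\mid a^r-1$; since $0\leq a^r-1<a^m<a^m+1$, this forces $a^r-1=0$, hence $r=0$.

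In either case we conclude that $r=0$, which is precisely the statement that $m$ divides $n$. I do not expect any real obstacle: the argument is purely elementary, parallel to the standard proof of Lemma~\ref{lem:qx-1_qy-1} (the only new wrinkle is having to track the sign $(-1)^q$, which is why the ``$+1$'' version needs a two-case analysis whereas the ``$-1$'' version does not).
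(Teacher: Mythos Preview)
Your argument is correct. The paper does not actually supply a proof of this lemma: it is one of the ``first two lemmata'' that are ``stated without reference, being standard and easily verified'', so there is nothing to compare against. The Euclidean-division argument you give, tracking the sign $(-1)^q$ from $a^m\equiv -1\pmod{a^m+1}$, is precisely the standard elementary verification the paper has in mind. One minor remark on exposition: in the case $q$ even you obtain a contradiction (so that case is excluded), not the conclusion $r=0$; the sentence ``In either case we conclude that $r=0$'' would read more cleanly as ``Hence $q$ is odd and $r=0$''. This does not affect the validity of the proof, and in fact your argument yields the slightly stronger (and well-known) conclusion that $n/m$ must be odd.
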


If $m$ is a nonzero integer, let $v_p(m)$ denote the largest integer $k$ with the property that $p^k \mid m$.

\begin{knownlem}[{see e.g.\ \cite[Lemma 2.1.22]{Cohen2007I}}]\label{lem:vp-1}
Let $p>2$ be prime and $a$ an integer with $p \mid a-1$. Then for any positive integer $n$ we have
\[
	v_p(a^n - 1) = v_p(a-1) + v_p(n).
\]
\end{knownlem}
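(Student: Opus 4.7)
The plan is to prove the formula by induction on $v_p(n)$, reducing everything to two basic sub-cases: the coprime case where $\gcd(n,p) = 1$, and the base step $n = p$. The general case will then follow by writing $n = p^t m$ with $\gcd(m,p) = 1$ and iterating.

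First, if $\gcd(n,p) = 1$, I would use the factorization
\begin{equation*}
a^n - 1 = (a - 1)\bigl(a^{n-1} + a^{n-2} + \cdots + a + 1\bigr).
\end{equation*}
Since $a \equiv 1 \pmod{p}$, the second factor is congruent to $n \pmod{p}$ and hence coprime to $p$, so $v_p(a^n - 1) = v_p(a - 1)$. Next, for $n = p$, I would set $k := v_p(a-1)$ and write $a = 1 + p^k u$ with $\gcd(u,p) = 1$. Expanding $a^p - 1$ by the binomial theorem, the $j=1$ term contributes $p \cdot p^k u = p^{k+1} u$, which has valuation exactly $k+1$; for $2 \leq j \leq p-1$ the factor $\binom{p}{j}$ carries an extra $p$, giving valuation at least $1 + 2k$; and the $j = p$ term has valuation $pk$. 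Since $p \geq 3$ and $k \geq 1$, both $1 + 2k$ and $pk$ strictly exceed $k + 1$, so the leading term dominates and $v_p(a^p - 1) = k + 1$. Combining the two sub-cases, for $n = p^t m$ with $p \nmid m$, induction on $t$ (applied with $a$ replaced at each step by $a^{p^{s}}$, which still satisfies $p \mid a^{p^s} - 1$) gives $v_p(a^{p^t} - 1) = k + t$; then invoking the coprime case with base $a^{p^t}$ yields
\begin{equation*}
v_p(a^n - 1) = v_p\bigl((a^{p^t})^m - 1\bigr) = v_p(a^{p^t} - 1) = k + t = v_p(a - 1) + v_p(n).
\end{equation*}

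The main obstacle is showing that equality (not merely a lower bound) holds in the base case $n = p$, and this is precisely where the hypothesis $p > 2$ enters. For $p = 2$ the binomial term with $j = 2$ has valuation $2k$, which coincides with $k + 1$ when $k = 1$, so the two leading terms of the expansion can cancel or combine into something of strictly larger valuation (as witnessed by $3^2 - 1 = 8$), and the clean formula fails. For odd $p$ this collision cannot happen, because $\binom{p}{2}$ is divisible by $p$, pushing the $j = 2$ term's valuation up to at least $1 + 2k > k + 1$; hence the argument goes through for every odd prime and every $k \geq 1$.
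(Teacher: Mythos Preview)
Your proof is correct and is the standard argument for the Lifting-the-Exponent lemma. The paper does not actually supply a proof of this statement; it simply cites \cite[Lemma 2.1.22]{Cohen2007I}, so there is nothing to compare your argument against beyond noting that your self-contained treatment (coprime case via the geometric-series factor, $n=p$ case via the binomial expansion, then induction on $v_p(n)$) is exactly the proof one finds in Cohen's book.
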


\begin{knownlem}\label{lem:vp+1}
Let $p>2$ be prime and $a$ an integer with $p \mid a+1$. Then for any positive integer $n$ we have
\[
	v_p(a^n + 1) = \begin{cases}
	v_p(n) + v_p(a+1), & \text{ if }n \text{ is odd},\\
	0, & \text{ if } n \text{ is even}.
	\end{cases}
\]
\end{knownlem}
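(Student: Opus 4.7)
The plan is to deduce this lemma from the preceding Lemma~\ref{lem:vp-1} by the substitution $a \mapsto -a$, handling the two parities of $n$ separately. The even case is essentially by inspection: if $n$ is even and $a \equiv -1 \pmod p$, then $a^n \equiv 1 \pmod p$, so $a^n + 1 \equiv 2 \pmod p$; since $p > 2$, this gives $v_p(a^n + 1) = 0$, in agreement with the claim.

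For the odd case, I would set $b = -a$, so that the hypothesis $p \mid a+1$ translates into $p \mid b-1$. Since $n$ is odd, we have the identity
\[
	a^n + 1 = -b^n + 1 = -(b^n - 1),
\]
hence $v_p(a^n + 1) = v_p(b^n - 1)$. Now Lemma~\ref{lem:vp-1} applied to $b$ yields
\[
	v_p(b^n - 1) = v_p(b - 1) + v_p(n) = v_p(a+1) + v_p(n),
\]
which is exactly the claimed formula.

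I do not foresee any real obstacle: the substitution trick converts the ``plus one'' lifting-the-exponent situation to the ``minus one'' one that is already at our disposal, and the parity considerations are entirely elementary. The only subtle point is the role of $p > 2$, which is needed both to ensure $v_p(2) = 0$ in the even case and for Lemma~\ref{lem:vp-1} to apply in the odd case. There is a minor notational caveat in that $b = -a$ may be negative; however, Lemma~\ref{lem:vp-1} is formulated for arbitrary integers $b$ with $p \mid b - 1$, so this poses no difficulty.
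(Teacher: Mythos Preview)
Your proof is correct and follows essentially the same approach as the paper: the even case is handled by the observation $a^n+1\equiv 2\pmod p$, and the odd case by the substitution $b=-a$ to reduce to Lemma~\ref{lem:vp-1}. The paper's argument is just a more compressed version of what you wrote.
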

\begin{proof}
For even $n$, this is easy to see: the assumption $a \equiv -1 \bmod{p}$  implies $a^n +1 \equiv 2 \not\equiv 0 \pmod{p}$. For odd $n$, the statement follows from Lemma~\ref{lem:vp-1} noting that $v(a^n + 1 ) = v_p((-a)^n - 1)$.
\end{proof}

\begin{knownlem}[{\cite[Corollary 2.1.23]{Cohen2007I}}]\label{lem:v2_powers-1}
Let $a \in \Z $ be odd. Then for any positive integer $n$ we have 
\[
	v_2(a^n-1)= \begin{cases}
	v_2(a-1) & \text{if } n \text{ is odd}, \\
	v_2(a^2 - 1) + v_2(n) - 1 & \text{if } n \text{ is even}.
	\end{cases}
\]
\end{knownlem}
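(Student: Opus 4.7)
The plan is to proceed by case analysis on the parity of $n$, in direct analogy with Lemma~\ref{lem:vp-1} but paying attention to the standard subtleties at the prime $2$.

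First, for odd $n$, I would use the factorization
$$a^n - 1 = (a-1)\bigl(a^{n-1} + a^{n-2} + \cdots + a + 1\bigr).$$
Since $a$ is odd, each of the $n$ summands in the second factor is odd, and therefore (as $n$ is odd) their sum is odd. Hence the second factor contributes nothing to $v_2$, giving $v_2(a^n-1) = v_2(a-1)$, as required.

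For even $n$, write $n = 2^k m$ with $k \geq 1$ and $m$ odd. Applying the odd case to $(a^{2^k})^m - 1$ reduces the problem to computing $v_2(a^{2^k} - 1)$. I would then telescope
$$a^{2^k} - 1 = (a^2 - 1)\prod_{j=1}^{k-1}\bigl(a^{2^j} + 1\bigr).$$
Since $a$ is odd, $a^2 \equiv 1 \pmod 8$, so $a^{2^j} \equiv 1 \pmod 8$ for every $j \geq 1$, which forces $v_2(a^{2^j} + 1) = 1$. The product therefore contributes exactly $k - 1$ factors of $2$, and noting $v_2(n) = k$ one obtains
$$v_2(a^n - 1) = v_2(a^2 - 1) + (k-1) = v_2(a^2 - 1) + v_2(n) - 1.$$

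This argument presents no real obstacle; the only point requiring care is the single extra factor of $a^2 - 1$ (rather than $a - 1$), which is forced by the fact that the ``lifting the exponent'' identity behaves anomalously at $p = 2$, and is precisely the source of the $-1$ appearing in the formula. A quick sanity check at $n = 2$, where $k = 1$, the product is empty, and the formula collapses to $v_2(a^2 - 1)$, confirms the statement in the base case.
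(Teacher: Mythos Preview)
Your proof is correct. Note, however, that the paper does not give its own proof of this lemma: it is stated as a known result with a citation to \cite[Corollary 2.1.23]{Cohen2007I}, so there is nothing to compare against. Your argument is the standard one for the $p=2$ case of the lifting-the-exponent lemma, and all steps are sound.
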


The next lemma is formulated only for odd $n$ and is easy to check.

\begin{knownlem}\label{lem:v2_oddexpo}
Let $a$ and $n$ be positive integers  with $n$ odd. Then 
\[
	v_2(a^n+1)=v_2(a+1).
\]
\end{knownlem}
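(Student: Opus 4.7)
The plan is to reduce the statement to the well-known polynomial identity
\[
a^n + 1 = (a+1)\,S, \qquad S := a^{n-1} - a^{n-2} + a^{n-3} - \cdots - a + 1,
\]
which holds whenever $n$ is odd (equivalently, $-1$ is a root of $X^n + 1$ in this case). Once this factorization is in hand, the proof reduces to showing that the cofactor $S$ is odd, for then $v_2(a^n + 1) = v_2(a+1) + v_2(S) = v_2(a+1)$, which is exactly the claim.

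To see that $S$ is odd I would split on the parity of $a$. If $a$ is even, then every summand of $S$ apart from the constant term $+1$ is even, so $S \equiv 1 \pmod 2$. If $a$ is odd, then each of the $n$ summands in $S$ is congruent to $\pm 1$ modulo $2$, and since there are $n$ alternating-sign terms starting and ending with $+1$ (so $(n+1)/2$ plus signs and $(n-1)/2$ minus signs), the sum reduces modulo $2$ to $1$. In either case $S \equiv 1 \pmod 2$, completing the argument.

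There is essentially no obstacle here; the lemma is genuinely elementary. As an alternative route, one could observe that for odd $n$ one has $a^n + 1 = -\bigl((-a)^n - 1\bigr)$ and, in the nontrivial case where $a$ is odd, apply Lemma \ref{lem:v2_powers-1} to the odd integer $-a$ with odd exponent $n$ to obtain $v_2((-a)^n - 1) = v_2(-a-1) = v_2(a+1)$, the even-$a$ case being immediate from both sides vanishing. Either phrasing fits in a few lines, and the direct factorization argument above seems cleanest.
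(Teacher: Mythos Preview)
Your argument is correct: the factorization $a^n+1=(a+1)S$ for odd $n$ together with the observation that $S$ is odd (your case split on the parity of $a$ is fine) gives the result immediately, and your alternative via Lemma~\ref{lem:v2_powers-1} applied to $-a$ is also valid. The paper itself offers no proof beyond declaring the lemma ``easy to check'', so there is nothing to compare against; either of your routes is an appropriate justification.
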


\begin{knownlem}\label{lem:v3} For any positive integer $n$, we have
\[
	v_3(2^n - 1) = \begin{cases}
	0, &\text{if }n \text{ is odd},\\
	v_3(n) + 1, &\text{if } n \text{ is even},
	\end{cases}
	\qquad \text{and} \qquad
	v_3(2^n + 1) = \begin{cases}
	v_3(n) + 1, &\text{if } n \text{ is odd},\\
	0, &\text{if } n \text{ is even}.
	\end{cases}
\]
\end{knownlem}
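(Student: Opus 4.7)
The statement is a direct specialization of the previous lifting-the-exponent lemmata (Lemmas~\ref{lem:vp-1} and \ref{lem:vp+1}) to the prime $p=3$ and base $a=2$, combined with an elementary congruence check when those lemmata do not apply. I would split into four cases according to which formula is being verified and the parity of $n$.

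\textbf{Case 1: $v_3(2^n-1)$ with $n$ odd.} Since $2 \equiv -1 \pmod 3$, I have $2^n \equiv -1 \pmod 3$ for odd $n$, so $2^n - 1 \equiv -2 \equiv 1 \pmod 3$, giving $v_3(2^n-1)=0$.

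\textbf{Case 2: $v_3(2^n+1)$ with $n$ even.} By the same congruence, $2^n \equiv 1 \pmod 3$ for even $n$, so $2^n + 1 \equiv 2 \pmod 3$ and $v_3(2^n+1)=0$.

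\textbf{Case 3: $v_3(2^n+1)$ with $n$ odd.} Since $3 \mid 2+1$, Lemma~\ref{lem:vp+1} (with $p=3$, $a=2$) applies and yields $v_3(2^n+1)=v_3(n)+v_3(2+1)=v_3(n)+1$.

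\textbf{Case 4: $v_3(2^n-1)$ with $n$ even.} Write $n=2m$. Then $2^n-1=4^m-1$, and since $3\mid 4-1$, Lemma~\ref{lem:vp-1} (with $p=3$, $a=4$) gives $v_3(4^m-1)=v_3(4-1)+v_3(m)=1+v_3(m)$. Because $\gcd(2,3)=1$, we have $v_3(n)=v_3(2m)=v_3(m)$, and so $v_3(2^n-1)=v_3(n)+1$.

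There is no real obstacle here: once one notices that $3$ divides both $2+1$ and $4-1$, the two LTE lemmata already recorded in the paper cover the only two nontrivial cases, and the remaining two cases follow from the congruence $2\equiv -1\pmod 3$. The only small bookkeeping point worth emphasizing in the write-up is the reduction from exponent $n$ to exponent $m=n/2$ in Case~4, which is why a factor of $+1$ (rather than $+v_3(2)=0$) shows up in the formula.
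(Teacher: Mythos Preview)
Your proof is correct and follows essentially the same approach as the paper: the paper also handles $v_3(2^n-1)$ for odd $n$ by the congruence $2^n\equiv 2\pmod 3$, the even case via the substitution $2^n-1=4^{n/2}-1$ and Lemma~\ref{lem:vp-1}, and derives the formula for $v_3(2^n+1)$ from Lemma~\ref{lem:vp+1}. The only cosmetic difference is that the paper invokes Lemma~\ref{lem:vp+1} for both parities at once (its even case already gives $0$), whereas you check the even case by a direct congruence.
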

\begin{proof}
The formula for $v_3(2^n + 1)$ follows immediately from Lemma~\ref{lem:vp+1}. 

It is clear that $v_3(2^n - 1)= 0 $ for odd $n$, because $2^n \equiv 2 \pmod{3}$ for $n$ odd. If $n$ is even, then note that $v_3(2^n - 1)= v_3(4^{n/2} - 1)$ and appeal to Lemma~\ref{lem:vp-1}.
\end{proof}

\begin{knownlem}[Zsigmondy's theorem \cite{Zsigmondy1892}]\label{lem:Zsigmondy}
Let $a > b \geq 1$ be coprime integers. Then for any integer $n\geq 1$, there is a prime number $p$ (called a primitive divisor) that divides $a^n - b^n$ and does not divide $a^k - b^k$ for any $1\leq k < n$, with the following exceptions:
\begin{itemize}
\item $n = 1$ and $a-b = 1$,
\item $n = 2$ and $a+b$ is a power of 2,
\item $n = 6$ and $a=2, b=1$.
\end{itemize}
Similarly, $a^n + b^n$ has at least one primitive divisor, with the exception
\begin{itemize}
\item $n = 3$ and $a=2, b=1$.
\end{itemize}
\end{knownlem}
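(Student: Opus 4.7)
The plan is to use the factorisation of $a^n - b^n$ via cyclotomic polynomials. Let $\Phi_d(X,Y) = Y^{\varphi(d)}\Phi_d(X/Y)$ denote the homogenised $d$-th cyclotomic polynomial, so that
\[
	a^n - b^n = \prod_{d \mid n} \Phi_d(a,b).
\]
It suffices to show that, outside the listed exceptional triples, $\Phi_n(a,b)$ possesses a prime factor $p$ for which the multiplicative order of $ab^{-1}$ modulo $p$ equals $n$, since such a $p$ is automatically a primitive prime divisor of $a^n - b^n$.

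The first step is a classical dichotomy for primes $p \mid \Phi_n(a,b)$: either the multiplicative order of $ab^{-1}$ modulo $p$ is exactly $n$ (so $p$ is primitive), or $p \mid n$; moreover in the latter case $p$ is the largest prime factor of $n$ and $v_p(\Phi_n(a,b)) \leq 1$. This is obtained by comparing $v_p(a^n - b^n)$ against $v_p(a^m - b^m)$ for $m = n/p$, using the valuation identities from Lemmas~\ref{lem:vp-1} and \ref{lem:v2_powers-1}.

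The second step is a size bound. Factoring $\Phi_n(a,b) = \prod_{\zeta}(a - \zeta b)$ over primitive $n$-th roots of unity and estimating $|a - \zeta b| \geq a - b$ gives $\Phi_n(a,b) \geq (a - b)^{\varphi(n)}$ for $n \geq 2$; a slight sharpening furnishes $\Phi_n(a,b) > n$ as soon as $\varphi(n)$ is not too small. Combined with the dichotomy, this forces a primitive prime divisor to exist except when $\Phi_n(a,b) \leq n$. A finite check over $(a,b,n)$ with $\varphi(n)$ and $a-b$ both small then isolates the three exceptions: $n=1$ with $a-b=1$; $n=2$ with $a+b$ a power of $2$ (the sole prime $2$ already appears in $a^2-b^2 = (a+b)(a-b)$ at a smaller level); and $(n,a,b)=(6,2,1)$, where $\Phi_6(2,1) = 3$ already divides $2^2 - 1$.

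For the sum version, apply the identity $a^n + b^n = (a^{2n} - b^{2n})/(a^n - b^n)$: a primitive prime divisor of $a^{2n} - b^{2n}$ (one of order $2n$) cannot divide any $a^k + b^k$ with $k < n$, since otherwise it would divide $a^{2k} - b^{2k}$, and hence it is a primitive divisor of $a^n + b^n$. For $n \geq 2$ the three exceptions for differences at level $2n \geq 4$ reduce to $2n = 6$, i.e.\ $(n,a,b) = (3,2,1)$; for $n = 1$ any prime dividing $a+b$ is trivially primitive. The main obstacle I anticipate is the first step, where proving that the extrinsic primes can only be the largest prime factor of $n$, contributing valuation at most one, requires a delicate analysis of cyclotomic valuations; it is precisely this bookkeeping that forces the $n=6$ exception (and thereby the $n=3$ exception for sums).
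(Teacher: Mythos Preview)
The paper does not prove this lemma; it is stated as a known result with a citation to Zsigmondy's original 1892 paper and used as a black box throughout. So there is no ``paper's own proof'' to compare against.

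Your sketch follows the standard cyclotomic route and is correct in its architecture: factor $a^n - b^n = \prod_{d\mid n}\Phi_d(a,b)$, show that every prime dividing $\Phi_n(a,b)$ is either primitive or equals the largest prime $P\mid n$ with $v_P(\Phi_n(a,b))\le 1$, and then argue that $\Phi_n(a,b) > P$ outside finitely many cases. The reduction of the $a^n+b^n$ statement to the difference case at level $2n$ is also the standard and correct manoeuvre, and your identification of the exceptions is accurate.

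One remark on where the real work hides. The bound $\Phi_n(a,b)\ge (a-b)^{\varphi(n)}$ is vacuous precisely in the most interesting case $a-b=1$ (e.g.\ $a=2$, $b=1$), so the phrase ``a slight sharpening furnishes $\Phi_n(a,b)>n$'' is doing a lot of lifting. What one actually needs is that, in the absence of a primitive prime, $\Phi_n(a,b)\in\{1,P\}$, and then a genuine lower bound such as $\Phi_n(a,b)\ge 2^{\varphi(n)-1}$ for $a\ge 2$ (obtained by pairing conjugate primitive roots and estimating $|a-\zeta b|\,|a-\bar\zeta b|=a^2-2ab\cos\theta+b^2$), together with a short finite check for the handful of $n$ with $2^{\varphi(n)-1}\le n$. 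You flag the dichotomy step as the main obstacle, but in practice that step is routine once one has the lifting-the-exponent identities you cite; it is the size estimate that requires the case analysis isolating $(n,a,b)=(6,2,1)$.
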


\subsection{Lower bounds for linear forms in logarithms}

We will have use of  lower bounds for linear forms in more than two logarithms only in the integer case, and appeal to work of  Matveev.

\begin{knownthm}[{Matveev, 2000, \cite[Corollary 2.3]{Matveev2000}}]\label{thm:Matveev}
Let $a_1, \ldots ,  a_n \in \Z_{\geq 2}$, $b_1, \ldots ,  b_n \in \Z$ and
\[
	\Lambda = b_1 \log a_1 + \dots + b_n \log a_n.
\]
Suppose that $\Lambda\neq 0$. Then we have
\[
	\log |\Lambda|
	> - 1.4 \, n^{4.5} \, 30^{n+3} \log a_1 \cdots \log a_n (1+ \log \max\{|b_1|, \ldots ,  |b_n|\}).
\]
\end{knownthm}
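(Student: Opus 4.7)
The stated result is Matveev's lower bound for linear forms in logarithms of rational integers, which the paper uses as a black box; we do not attempt a full reproof here, as it requires the full machinery of the Gel'fond--Baker method. I outline the strategy that Matveev follows in \cite{Matveev2000}.

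The starting point is to assume, for contradiction, that
\[
 |\Lambda| < \exp\bigl(-C(n)\,\log a_1\cdots\log a_n\,(1+\log B)\bigr),
\]
where $B=\max|b_i|$ and $C(n)=1.4\,n^{4.5}\,30^{n+3}$, and to build an auxiliary function of $n-1$ complex variables whose coefficients are rational integers of controlled size. Concretely, one chooses parameters $T_0,\ldots,T_{n-1}$ and $S_0,\ldots,S_{n-1}$ (functions of $n$, the $\log a_i$ and $B$) and invokes Siegel's lemma to produce a nonzero polynomial $P$ such that the associated function
\[
 F(z_1,\ldots,z_{n-1}) \;=\; \sum_{\lambda}P(\lambda)\,a_1^{\lambda_1 z_1}\cdots a_{n-1}^{\lambda_{n-1} z_{n-1}}
\]
vanishes to prescribed high order at an initial grid of integer points.

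The second step is the Baker--Feldman extrapolation: because $\Lambda$ is assumed tiny, one can trade one unit of $\log a_n$ for a small error at every grid point, and an analytic Schwarz-type estimate then forces $F$ to vanish at a much larger grid of points than was arithmetically imposed. Iterating the extrapolation a controlled number of times pushes the vanishing past what the degree of $P$ can support, and a multiplicity/zero-estimate of Philippon--W\"ustholz type then forces an algebraic subgroup relation among the $a_i$ that contradicts the nonvanishing of $\Lambda$.

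The main obstacle --- and Matveev's essential innovation over Baker, W\"ustholz, and Waldschmidt --- is the Kummer descent step that allows one to work inside a suitable subfield of $\Q(a_1^{1/d},\ldots,a_n^{1/d})$ of moderate degree. This is what reduces the dependence on $n$ from the earlier $n^{cn}$ type bounds to the polynomial-in-exponent constant $n^{4.5}\,30^{n+3}$ stated above. Everything else amounts to a careful, lengthy optimization of the parameters $T_i$, $S_i$ and of the Kummer extension degree to produce the explicit numerical constants $1.4$ and $30$. For the purposes of the present paper, we simply quote the result from \cite{Matveev2000}.
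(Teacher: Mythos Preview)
The paper does not prove this statement at all; it is cited from Matveev \cite{Matveev2000} as a known result in the preliminaries section and used as a black box throughout. Your proposal correctly recognizes this and, going beyond the paper, supplies a reasonable high-level sketch of Matveev's method (auxiliary function via Siegel's lemma, extrapolation, zero estimate, Kummer descent for the improved dependence on $n$).
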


The next lemma describes the way in which we will apply the LLL-algorithm to reduce the lower bounds for linear forms in four logarithms. It is a straightforward variant of \cite[Lemma VI.1]{Smart1998}.

\begin{knownlem}[LLL reduction]\label{lem:LLL}
Let $\gamma_1,\gamma_2,\gamma_3, \gamma_4$ be positive real numbers and $x_1,x_2,x_3,x_4$ integers and let
\[
	|\Lambda|
	:= |x_1 \log \gamma_1 + x_2 \log \gamma_2 + x_3 \log \gamma_3 + x_4 \log \gamma_4|
	\neq 0.
\]
Assume that the $x_i$ are bounded in absolute values by some constant $M$ and choose a constant $C>M^4$.
Consider the matrix
\[
	A= \begin{pmatrix}
	1 & 0 & 0 & 0 \\
	0 & 1 & 0 & 0 \\
	0 & 0 & 1 & 0 \\
	[C \log \gamma_1] & [C \log \gamma_2] &[C \log \gamma_3] &[C \log \gamma_4]
	\end{pmatrix},
\]
where $[x]$ denotes the nearest integer to $x$. The columns of $A$ form a basis of a lattice. Let $B$ be the matrix that corresponds to the LLL-reduced basis and let $B^*$ be the matrix corresponding to the Gram-Schmidt basis constructed from $B$. Let $c$ be the Euclidean norm of the smallest column vector of $B^*$ and set $S:=3M^2$ and $T:=(1+4M)/2$.
If $c^2 > T^2 + S$, then 
\[
	|\Lambda|
	> \frac{1}{C}\left( \sqrt{c^2-S} - T \right).
\]
\end{knownlem}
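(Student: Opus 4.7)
My plan is to derive the inequality by realizing $C\Lambda$ as (essentially) the last coordinate of a lattice vector in $L = A\Z^4$, and then invoking the standard lower bound on the Euclidean norm of any nonzero lattice element in terms of the Gram--Schmidt basis. First I would check that $L$ is a full-rank lattice in $\R^4$: this is immediate since $\det A = [C\log\gamma_4]$, which is nonzero provided $C$ is large enough (which is implicit in the condition $C>M^4$).

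Given the integer vector $\mathbf{x}=(x_1,x_2,x_3,x_4)^\top$ appearing in $\Lambda$, I would form the lattice point
\[
	\mathbf{y} := A\mathbf{x} = \bigl(x_1,\, x_2,\, x_3,\, \textstyle\sum_{i=1}^{4} x_i [C\log\gamma_i]\bigr)^\top.
\]
Since $\bigl|[C\log\gamma_i]-C\log\gamma_i\bigr|\le 1/2$ for each $i$, the fourth coordinate of $\mathbf{y}$ differs from $C\Lambda$ by at most $\tfrac{1}{2}\sum_i |x_i|\le 2M$. Combining this with $|x_i|\le M$ for $i=1,2,3$, a short calculation yields
\[
	\|\mathbf{y}\|^2 \;\le\; 3M^2 + \bigl(C|\Lambda|+T\bigr)^2 \;=\; S + \bigl(C|\Lambda|+T\bigr)^2,
\]
where $T=(1+4M)/2$ is chosen to absorb the rounding error (with a touch of slack).

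Since $\Lambda\neq 0$ forces $\mathbf{x}\neq 0$, and $A$ is invertible, $\mathbf{y}$ is a nonzero element of $L$. The standard lattice-theoretic fact that any nonzero vector of $L$ has Euclidean norm at least $\min_i\|b_i^*\|=c$ then gives $c^2 \le \|\mathbf{y}\|^2 \le S + (C|\Lambda|+T)^2$. The hypothesis $c^2>T^2+S$ ensures that $c^2-S>T^2\ge 0$, so both sides remain nonnegative after extracting square roots, and rearranging yields the stated inequality $|\Lambda|>\tfrac{1}{C}\bigl(\sqrt{c^2-S}-T\bigr)$. The argument is conceptually routine; the only care required is in the bookkeeping of the constants $S$ and $T$, which I would tune to match the specific normalization used later in the paper when running LLL on the reduction matrices.
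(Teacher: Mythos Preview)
Your argument is correct and is precisely the standard proof behind \cite[Lemma~VI.1]{Smart1998}; the paper does not give its own proof but simply cites Smart, so there is nothing to compare beyond noting that you have reproduced the intended argument. One minor quibble: the nonvanishing of $\det A=[C\log\gamma_4]$ does not follow from $C>M^4$ per se but from $\gamma_4\neq 1$, which is already implicit in the hypothesis that the columns of $A$ span a full-rank lattice.
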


For any algebraic number $\alpha$ of degree $d$ over $\Q$, the logarithmic height is defined as usual by the formula
\[
	\h(\alpha) 
	= \frac{1}{d} \left( \log |a| + \sum_{i=1}^d \log \max \{1, |\alpha^{(i)}| \} \right),
\]
where $a$ is the leading coefficient of the minimal polynomial of $\alpha$ over $\Z$, and
the $\alpha^{(i)}$'s are the conjugates of $\alpha$ in the field of complex numbers.

For the bounds on the equation $x^2 - 2 = y^n$ in Section~\ref{sec:x2-2}, we will require the full strength of Laurent's theorem for linear forms in two logarithms.

\begin{knownthm}[{Laurent, 2008, \cite[Theorem 2]{Laurent2008}}]\label{thm:Laurent}
Let $a_1, a_2, h, \rho$ and $\mu$ be real numbers with $\rho >1$ and $1/3 \leq \mu < 1$. Set
\begin{align*}
	&\sigma  = \frac{1 + 2\mu - \mu^2}{2}, \quad
	\lambda = \sigma \log \rho, \quad
	H = \frac{h}{\lambda} + \frac{1}{\sigma}, \\
	& \omega = 2 \left( 1 + \sqrt{1 + \frac{1}{4 H^2}} \right), \quad 
	\theta = \sqrt{1 + \frac{1}{4 H^2}} + \frac{1}{2H}.
\end{align*}
Consider the linear form 
\[
	\Lambda = b_2 \log \alpha_2 - b_1 \log \alpha_1,
\]
where $b_1$ and $b_2$ are positive integers, and $\alpha_1$ and $\alpha_2$ are multiplicatively independent real algebraic numbers $>1$.
Put $D = [\Q(\alpha_1, \alpha_2): \Q]$, and assume that
\begin{align*}
&\text{(1)} \quad
h \geq \max \left\{
	D \left( 
		\log \left( 
			\frac{b_1}{a_2} + \frac{b_2}{a_1} 
			\right)
		+ \log \lambda + 1.75 \right) + 0.06,
	\lambda,
	\frac{D \log 2}{2}
	\right\},\\
&\text{(2)} \quad
a_i \geq \max\{ 1, (\rho-1) \log \alpha_i + 2 D \h(\alpha_i)\} \quad (i = 1,2),\\
&\text{(3)} \quad
a_1 a_2 \geq \lambda^2.
\end{align*}
Then
\[
\log |\Lambda| 
\geq - C \left( h + \frac{\lambda}{\sigma} \right) ^2 a_1 a_2
- \sqrt{\omega \theta} \left( h + \frac{\lambda}{\sigma} \right)
- \log \left( 
	C'  \left( h + \frac{\lambda}{\sigma} \right)^2 a_1 a_2
	\right)
\]
with
$$
C = \frac{\mu}{\lambda^3 \sigma} \left(
		\frac{\omega}{6} + \frac{1}{2} \sqrt{
			\frac{\omega^2}{9} 
			+ \frac{8 \lambda \omega^{5/4} \theta^{1/4}}{3 \sqrt{a_1 a_2} H^{1/2}}
			+ \frac{4}{3} \left( \frac{1}{a_1} + \frac{1}{a_2} \right)
			\frac{\lambda \omega}{H}		
		}
	\right)^2 \; \; \mbox{ and } \; \; 
C' = \sqrt{\frac{C \sigma \omega \theta}{\lambda^3 \mu}}.
$$
\end{knownthm}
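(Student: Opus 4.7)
Theorem~\ref{thm:Laurent} is a quoted deep result of Laurent from 2008, so the present paper cites rather than reproves it. If I were to reconstruct the proof, I would follow the \emph{interpolation determinant method} developed by Laurent, Mignotte and Nesterenko in the 1990s and refined in \cite{Laurent2008}.

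The plan is to argue by contradiction: assume $\log |\Lambda|$ is strictly smaller than the claimed lower bound and derive an impossibility. After choosing auxiliary integer parameters $L_1, L_2, S, T$ whose sizes scale with $a_1, a_2, h, \rho, \mu$, I would form the interpolation determinant
\[
\Delta \;=\; \det \bigl( \alpha_1^{\ell_1 s}\cdot(\alpha_1^{b_2/b_1})^{\ell_2 s}\cdot \alpha_2^{\ell_2 t}\bigr),
\]
whose rows are indexed by pairs $(\ell_1,\ell_2)\in\{0,\dots,L_1\}\times\{0,\dots,L_2\}$ and whose columns are indexed by $(s,t)\in\{0,\dots,S\}\times\{0,\dots,T\}$. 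Exploiting the identity $\alpha_1^{b_2/b_1} = \alpha_2 \exp(\Lambda/b_1)$, the substitution $\alpha_1^{b_2/b_1}\leadsto\alpha_2$ turns $\Delta$ into a Vandermonde-like determinant, up to a small error that is under our control precisely because $|\Lambda|$ is assumed tiny.

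Step one is the \emph{analytic upper bound}: view the matrix entries as values of entire functions of complex variables and apply a multi-variable Schwarz lemma on a polydisc of carefully chosen radius; this produces an estimate of the shape $|\Delta|\le e^{-c L_1 L_2 S T}$ times a factor $|\Lambda|^N$ coming from the Taylor expansion of $\exp(\Lambda/b_1)$. Step two is the \emph{arithmetic lower bound}: after multiplication by a suitable power of the leading coefficients of the minimal polynomials of $\alpha_1$ and $\alpha_2$, $\Delta$ is an algebraic integer of degree at most $D$, and the classical Liouville inequality forces $|\Delta|\ge e^{-C' D (\h(\alpha_1)+\h(\alpha_2))\cdot(\text{something})}$ whenever $\Delta\neq 0$. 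This is the point at which the hypotheses (2)--(3) on $a_1, a_2$ (involving $\rho$ and $\h(\alpha_i)$) get used. Step three, and the most delicate conceptually, is the \emph{non-vanishing of $\Delta$}: in the two-logarithm setting one can rule out $\Delta=0$ by an elementary Vandermonde argument combined with the multiplicative independence of $\alpha_1$ and $\alpha_2$, provided the parameters $L_i, S, T$ are balanced so as to avoid degeneracy.

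Combining the analytic upper bound of step one with the arithmetic lower bound of step two, and optimizing the parameters in terms of $\rho$ and $\mu$ (which control $\sigma,\lambda,H,\omega,\theta$), yields exactly the explicit conclusion, with the constants $C$ and $C'$ displayed in the theorem. The main obstacle, and the true technical contribution of \cite{Laurent2008}, lies not in the overall architecture — which is essentially the Baker--Laurent--Mignotte--Nesterenko framework — but in the meticulous numerical optimization of the auxiliary parameters. Extracting sharp, application-ready constants (rather than merely a qualitative Baker-type bound) requires bookkeeping of every implicit constant through steps one and two, and I would expect this optimization, together with the non-vanishing argument in step three, to be by far the hardest part of the proof.
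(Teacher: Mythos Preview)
Your assessment is correct: the paper does not prove Theorem~\ref{thm:Laurent} at all---it is stated as a \texttt{knownthm} and simply quoted from \cite[Theorem~2]{Laurent2008} for later use in Section~\ref{sec:x2-2}. There is therefore no proof in the paper to compare your sketch against; your outline of the interpolation determinant method is a reasonable high-level summary of Laurent's original argument, but it goes beyond anything the present paper attempts.
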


We will also have use of a somewhat weaker but easy to apply version of this, a special case of Corollary 2 of Laurent \cite{Laurent2008}:

\begin{cor}\label{cor:Laurent}
Let $\alpha_1$ and $\alpha_2$ be multiplicatively independent real algebraic numbers $>1$ and $b_1,b_2 \in \Z$ not both zero.
Set $D = [\Q(\alpha_1, \alpha_2): \Q]$.
Then for $(C,m) = (20.3, 18)$ or $(C,m) = (17.9, 30)$ we have
\[
	\log | b_2 \log \alpha_2 - b_1 \log \alpha_1|
	\geq -C \cdot D^4 \left( \max \{ \log b^\prime + 0.38, m/D, 1 \} \right)^2 \log A_1 \log A_2,
\]
where $\log A_i \geq \max \{ h(\alpha_i), |\log \alpha_i|/D, 1/D \}$ and
\[
	b^\prime = \frac{|b_1|}{D \log A_2} + \frac{|b_2|}{D \log A_1}.
\]
\end{cor}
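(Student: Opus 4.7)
The plan is to derive the corollary as a specialization of Theorem \ref{thm:Laurent}, by fixing the auxiliary real parameters $\rho$ and $\mu$ appearing there and then simplifying the resulting inequality. Indeed, our statement is a particular instance of Corollary 2 of Laurent \cite{Laurent2008}: he proves a general bound of this shape in which the pair $(C,m)$ ranges over a table obtained by varying $(\rho,\mu)$, and the two pairs $(20.3, 18)$ and $(17.9, 30)$ correspond to specific rows of that table.

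Concretely, I would first set $a_i$ in Theorem \ref{thm:Laurent} to a small explicit multiple of $D \log A_i$, the precise multiple being dictated by $\rho$. Since, by hypothesis, $\log A_i$ dominates each of $h(\alpha_i)$, $|\log \alpha_i|/D$ and $1/D$, condition (2) of Theorem \ref{thm:Laurent} is automatically satisfied. Next, for fixed $\mu$, I would set $h$ to be (a small $(\rho,\mu)$-dependent multiple of)
\[
D \cdot \max\{\log b' + 0.38,\; m/D,\; 1\},
\]
chosen so that condition (1) holds: the numerical constant $0.38$ is adjusted to absorb the additive term $\log \lambda + 1.75 + 0.06/D$ on the right-hand side of (1), while the alternatives $m/D$ and $1$ inside the maximum handle the cases where $h$ is forced to be at least $\lambda$ or $D(\log 2)/2$. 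Condition (3) is a purely numerical inequality on $(\rho,\mu)$, which is easily checked once the pair is fixed.

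After these substitutions, the leading term $C(h + \lambda/\sigma)^2 a_1 a_2$ of Theorem \ref{thm:Laurent} becomes a numerical constant times
\[
D^4 \cdot \bigl(\max\{\log b' + 0.38,\; m/D,\; 1\}\bigr)^2 \cdot \log A_1 \log A_2,
\]
while the secondary contributions $\sqrt{\omega\theta}(h + \lambda/\sigma)$ and $\log\bigl(C'(h + \lambda/\sigma)^2 a_1 a_2\bigr)$ are of strictly smaller order in $h$ and $a_1 a_2$, and can be absorbed into the leading constant at the cost of a small numerical inflation. Two distinct choices of $(\rho, \mu)$ from Laurent's tabulation then produce the pairs $(C,m) = (20.3, 18)$ and $(17.9, 30)$ in our statement.

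The main obstacle is not conceptual but rather the numerical optimization: for each prescribed $m$, one has to select $(\rho, \mu)$ so that, after the absorption described above, the resulting leading constant $C$ is as small as possible. This optimization is carried out in detail in \cite{Laurent2008}, so in practice our proof reduces to quoting the two appropriate rows of his table.
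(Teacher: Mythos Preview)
The paper does not give a proof of this corollary at all: it is introduced with the words ``a special case of Corollary 2 of Laurent \cite{Laurent2008}'' and then simply stated. Your proposal is therefore more than the paper itself provides; your sketch of how the corollary is extracted from Theorem~\ref{thm:Laurent} by fixing $(\rho,\mu)$ and reading off the constants from Laurent's table is correct, and your final sentence (``in practice our proof reduces to quoting the two appropriate rows of his table'') is exactly what the paper does.
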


\section{Solution of equation \eqref{eq:myeqs}}\label{sec:myeqs}

\begin{newprop}\label{prop:sit1}
The only solutions to the equation
\begin{equation}\label{eq:prop_sit1}
	2^y ( 2^x + 1)^z - 1
	= 3^{r} (2^{x+1} + 1)^{w}
\end{equation}
in integers 
$x \geq 3$, $y \geq 1$, $z \geq 0$, $w \geq 0$, and $r$ arbitrary are given by 
\[
(x,y,z,w,r) \in \{(x,1,0,0,0), (x,1,1,1,0),  (x,2,0,0,1)\}.
\]
\end{newprop}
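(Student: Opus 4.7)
The plan is to split into cases based on which of $z$ and $w$ vanish, disposing of the degenerate cases by classical results from Section~\ref{sec:prelim} and attacking the main case $z \geq 1$, $w \geq 1$ via a combination of modular arithmetic and Matveev's theorem on linear forms in logarithms.

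\textbf{Degenerate cases.} If $z = w = 0$, the equation reduces to $2^y - 1 = 3^r$, and reduction modulo $8$ (since $2^y - 1 \equiv 7 \pmod 8$ for $y \geq 3$, while powers of $3$ are $1$ or $3$ modulo $8$) leaves exactly $(y,r) \in \{(1,0),(2,1)\}$, giving the listed solutions $(x,1,0,0,0)$ and $(x,2,0,0,1)$. If $w = 0$ and $z \geq 1$, reducing modulo $2^x + 1 \geq 9$ yields $3^r \equiv -1 \pmod{2^x + 1}$, whence $(2^x+1) \mid 3^{2r} - 1$; combining with Matveev's theorem applied to the three-term form $y \log 2 + z \log(2^x + 1) - r \log 3$ and the size estimate from the equation produces a contradiction. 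The case $z = 0$, $w \geq 1$, where the equation reads $2^y - 1 = 3^r(2^{x+1}+1)^w$, is handled by splitting on $r = 0$ (via Mih\u{a}ilescu's theorem, Theorem~\ref{thm:Catalan}) and $r \geq 1$ (via a mod-$3$ parity argument together with Zsigmondy, Lemma~\ref{lem:Zsigmondy}, applied to the factorisation $2^y - 1 = (2^{y/2}-1)(2^{y/2}+1)$).

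\textbf{Main case $z \geq 1$, $w \geq 1$.} The key identity $2(2^x + 1) = (2^{x+1}+1) + 1$ gives $2^x + 1 \equiv 2^{-1} \pmod{2^{x+1}+1}$, so reducing \eqref{eq:prop_sit1} modulo $2^{x+1}+1$ yields $2^{y-z} \equiv 1 \pmod{2^{x+1}+1}$. Since the multiplicative order of $2$ modulo $2^{x+1}+1$ is exactly $2(x+1)$, either $y = z$ or $|y - z| \geq 2(x+1)$. A separate mod-$3$ analysis via Lemma~\ref{lem:vp+1} further forces $r = 0$ whenever $x$ is odd and $z \geq 1$, which simplifies the argument in that sub-case. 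In the subcase $y = z$, the substitution $M = 2^{x+1}+1$ turns the equation into $(M+1)^y - 1 = 3^r M^w$; for $y = 1$ this directly gives the solution $(x,1,1,1,0)$, and for $y \geq 2$ a binomial expansion combined with the $p$-adic valuation formulas of Lemmas~\ref{lem:vp-1} and~\ref{lem:v3} applied to prime divisors of $M$, together with Mih\u{a}ilescu's theorem for the borderline $w \geq 2$ case, rules out any further solutions.

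\textbf{Main subcase and obstacle.} The remaining subcase is $|y - z| \geq 2(x+1)$, where I would apply Matveev's theorem (Theorem~\ref{thm:Matveev}) to
\[
    \Lambda = y \log 2 + z \log(2^x+1) - r \log 3 - w \log(2^{x+1}+1),
\]
using $|\Lambda| < 2 \cdot 3^{-r}(2^{x+1}+1)^{-w}$ coming from \eqref{eq:prop_sit1}, to obtain $r \log 3 + w(x+1) \log 2 \leq c_1 x^2 \log B$ with $B = \max\{y,z,r,w\}$; the trivial size estimate obtained by taking logarithms of \eqref{eq:prop_sit1} then bounds $B$ polynomially in $x$. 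The mod-$(2^x + 1)$ relation $3^r \equiv (-1)^{w+1} \pmod{2^x+1}$ (whence $(2^x+1) \mid 3^{2r} - 1$) should then trap $x$ into a finite range, after which an LLL-reduction (Lemma~\ref{lem:LLL}) and a direct search close the argument. The main obstacle is precisely this final bounding of $x$: Matveev's theorem alone only controls the exponents polynomially in $x$, so one must carefully combine the strong order-of-$2$ constraint $|y-z| \geq 2(x+1)$ with the growth of $r$ forced by $3^r \equiv \pm 1 \pmod{2^x+1}$ in order to finitise $x$ and reduce the problem to a tractable computation.
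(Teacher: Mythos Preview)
Your decomposition and several of the sub-arguments are reasonable, and the order-of-$2$ observation modulo $2^{x+1}+1$ (forcing $2(x+1)\mid y-z$) is an elegant structural fact that the paper does not exploit. However, there is a genuine gap at exactly the point you flag as the obstacle, and the ingredients you propose do not close it.

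Concretely: the congruence $(2^x+1)\mid 3^{2r}-1$ only gives $x\ll |r|$, while Matveev's bound on the four-term form $\Lambda$, together with the obvious size estimates, yields $\max\{y,z,w,|r|\}\ll x^2\log x$. Substituting one into the other produces $x\ll x^2\log x$, which is vacuous. The constraint $|y-z|\ge 2(x+1)$ does not help either: it forces $\max(y,z)\gtrsim x$, which again goes the wrong way. The same circularity afflicts your $w=0$, $z\ge 1$ sub-case.

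The paper resolves this with a different idea. It writes $\log(2^x\pm1)=x\log 2+O(2^{-x})$ and $\log(2^{x+1}\pm1)=(x+1)\log 2+O(2^{-x})$, so that the four-term $\Lambda$ differs from the \emph{two}-term form
\[
\Lambda'=r\log 3+\bigl(w(x+1)-y-zx\bigr)\log 2
\]
by $O(M\cdot 2^{-x})$ with $M=\max\{y,z,w\}$. The paper's case split is along $z=0$ versus $r=0$ (not $w=0$), precisely so that in the residual case one has $r\neq 0$, hence $\Lambda'\neq 0$; Laurent's sharp two-logarithm bound then gives $x\ll(\log M)^2$. Only after $x$ is controlled does Matveev on the full $\Lambda$ bound $M$ absolutely, after which continued fractions for $\log 3/\log 2$ reduce $x$ below $300$ and an LLL step for each surviving $x$ finishes the computation. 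This collapse-to-two-logarithms manoeuvre is the missing idea in your outline.

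Two smaller points. In your $z=0$, $w\ge1$ sub-case you treat only $r\ge 0$, but $r$ may be negative whenever $3\mid 2^{x+1}+1$; the paper handles all signs of $r$ at once by reducing modulo $8$ to force $y\le 2$. And your $y=z$ sub-case, though correct for small $y$ by direct computation, is not obviously complete from the sketch (the $w=1$ branch with $3\nmid M$ is not covered by the valuation argument you indicate); it becomes unnecessary anyway once $x$ is bounded via $\Lambda'$.
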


\begin{newprop}\label{prop:sit2}
The only solutions to the equation
\begin{equation}\label{eq:prop_sit2}
	2^y ( 2^x - 1)^z + 1
	= 3^{r} (2^{x+1} - 1)^{w}
\end{equation}
in integers 
$x \geq 3$, $y \geq 1$, $z \geq 0$, $w \geq 0$, and $r$ arbitrary are given by 
\[
(x,y,z,w,r) \in \{(x, x+2, 1, 2, 0), (x,1,1,1,0), (x,1,0,0,1), (x,3,0,0,2), (3,2,0,1,-1)\}.
\]
\end{newprop}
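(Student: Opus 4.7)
I would split into the cases $z = 0$, $w = 0$, and $z, w \geq 1$. When $z = 0$, the equation reads $2^y + 1 = 3^r(2^{x+1}-1)^w$: Mih\u{a}ilescu's theorem (Theorem~\ref{thm:Catalan}) handles $w = 0$, yielding $(y, r) \in \{(1, 1), (3, 2)\}$; for $w \geq 1$ and $r \geq 0$, writing $y = q(x+1) + s$ with $0 \leq s \leq x$ gives $2^y + 1 \equiv 2^s + 1 \pmod{2^{x+1}-1}$ with $0 < 2^s + 1 < 2^{x+1} - 1$ whenever $x \geq 2$, a contradiction. The sub-case $w \geq 1$ with $r < 0$ forces $x$ odd and $|r| \leq w\, v_3(2^{x+1}-1)$; the divisibility $N_0^w \mid 2^y + 1$ for $N_0$ the $3$-free part of $2^{x+1}-1$, combined with order-of-$2$ arguments in $(\Z/p)^\ast$ for primes $p \mid N_0$ and a size check, isolates the sporadic solution $(3,2,0,1,-1)$.

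When $w = 0$, the equation becomes $2^y(2^x - 1)^z + 1 = 3^r$. Reducing modulo $3$ (using that $3 \mid 2^x - 1$ iff $x$ is even) forces $x$ odd and $y$ odd, whence Lemma~\ref{lem:v2_powers-1} applied to $2^y(2^x-1)^z = 3^r - 1$ forces $y = 1$. The sub-case $z = 1$ reduces to $2^{x+1} - 1 = 3^r$, contradicted for $x \geq 3$ by Mih\u{a}ilescu, and the sub-case $z \geq 2$ is handled by Lemma~\ref{lem:powerOf3minus2perfectpowers} with $a = 2^x - 1$: its only exceptional solution requires $2^x - 1 = 11$, which is impossible.

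For the generic case $z, w \geq 1$, I would introduce the linear form in four logarithms
\[
\Lambda = y \log 2 + z \log(2^x - 1) - r \log 3 - w \log(2^{x+1} - 1).
\]
A $2$-adic comparison (since $y \geq 1$ while the right-hand side of \eqref{eq:prop_sit2} has no factor of $2$) ensures $\Lambda \neq 0$, and the equation yields $|\Lambda| \leq 2/(2^y(2^x - 1)^z)$. Matveev's theorem (Theorem~\ref{thm:Matveev}) combined with the size estimates dictated by the equation then produces an upper bound of the form $B \leq C_1 x^2 \log x$, where $B := \max\{y, z, w, |r|\}$. To bound $x$ itself, I rewrite
\[
\Lambda = M \log 2 - r \log 3 + z \log(1 - 2^{-x}) - w \log(1 - 2^{-(x+1)}), \qquad M := y + zx - w(x+1),
\]
and apply Corollary~\ref{cor:Laurent} to the two-logarithm subform $M \log 2 - r \log 3$, which is nonzero unless $M = r = 0$ by $\Q$-linear independence of $\log 2$ and $\log 3$. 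Together with the $O(B/2^x)$ estimate for the last two terms of $\Lambda$, this yields $x \leq C_2 (\log B)^2$ and, in tandem with the earlier bound on $B$, an absolute upper bound on $x$. For each remaining value of $x$, the LLL reduction of Lemma~\ref{lem:LLL} brings $B$ into a small, computationally tractable range, after which the finitely many residual tuples can be checked directly.

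The chief technical obstacle is the sub-case $M = r = 0$, where the two-logarithm subform vanishes identically and the standard linear-form machinery is powerless. Here the equation collapses to $2^{w(x+1)-zx}(2^x - 1)^z + 1 = (2^{x+1} - 1)^w$; a direct analysis combining binomial expansion of $(2^{x+1}-1)^w$ with Zsigmondy's theorem (Lemma~\ref{lem:Zsigmondy}) applied to $2^{x+1} - 1$ should recover exactly the two families $(x, 1, 1, 1, 0)$ and $(x, x+2, 1, 2, 0)$. A secondary nuisance is the careful tracking of $v_3(2^{x+1}-1)$ whenever $r$ is negative, needed both for integrality of the right-hand side and for isolating the unique sporadic solution at $x = 3$ in the $z = 0$ case.
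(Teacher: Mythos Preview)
Your overall architecture---a four-logarithm form bounded below by Matveev, a derived two-logarithm form bounded via Corollary~\ref{cor:Laurent} to control $x$, then LLL for each surviving $x$---matches the paper's. The substantive difference is the case split: the paper isolates $z=0$ (Lemma~\ref{lem:sit2_z0}) and $r=0$ (Lemma~\ref{lem:sit2_r0}), \emph{not} $w=0$. Splitting off $r=0$ is the cleaner choice, because in the residual regime $z\geq 1$, $r\neq 0$ the two-logarithm subform $r\log 3 + (\text{integer})\log 2$ is automatically nonzero, and your ``chief technical obstacle'' $M=r=0$ simply never occurs. The paper's Lemma~\ref{lem:sit2_r0} disposes of \emph{all} of $r=0$ (not just the $M=0$ slice) by valuations: for each odd prime $p\mid 2^x-1$, comparing $v_p$ across $2^y(2^x-1)^z=(2^{x+1}-1)^w-1$ gives $(z-1)v_p(2^x-1)=v_p(w)$, while the $2$-adic side gives $y\leq x+1+v_2(w)$; a size comparison then forces $w\leq 4$, with $w=1,2$ yielding the two infinite families and $w=3,4$ ruled out. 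Your sketch ``binomial expansion plus Zsigmondy'' for the $M=r=0$ sub-case is not an obvious substitute---it is unclear what Zsigmondy is being applied to in $(2^{x+1}-1)^w$.

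Your separate $w=0$ case, which the paper never needs to isolate, also contains a gap. After correctly forcing $x$ and $y$ odd via reduction mod~$3$, you assert that Lemma~\ref{lem:v2_powers-1} applied to $3^r-1=2^y(2^x-1)^z$ forces $y=1$. But that lemma gives $v_2(3^r-1)=1$ only for $r$ odd; for $r$ even it yields $v_2(3^r-1)=2+v_2(r)$, so $y=3$ is compatible with $r\equiv 2\pmod 4$, $y=5$ with $r\equiv 8\pmod{16}$, and so on. The deduction does not go through as written. In the paper's organisation this entire sub-case ($w=0$, $z\geq 1$, $r\neq 0$) is absorbed into the main linear-forms argument and needs no elementary treatment. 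The simplest repair to your plan is therefore to swap your $w=0$ branch for an $r=0$ branch handled as in Lemma~\ref{lem:sit2_r0}; everything downstream then proceeds exactly as you describe, without the degenerate subform case.
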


We will first focus our attention on some special cases ($z=0$ and $r=0$) of Propositions~\ref{prop:sit1} and \ref{prop:sit2}, which correspond to the actual  solutions. Then we will provide a joint proof for both propositions that there exist no further solutions.

We start with $z=0$ in Proposition \ref{prop:sit1}.

\begin{newlem}\label{lem:sit1_z0}
The only solutions to the equation
\begin{equation}\label{eq:prop_sit1_z0}
	2^y - 1
	= 3^{r} (2^{x+1} + 1)^{w}
\end{equation}
in integers 
$x \geq 3$, $y \geq 1$, $w \geq 0$, and $r$ arbitrary are given by 
\[
(x,y,w,r) \in \{(x,1,0,0),  (x,2,0,1)\}.
\]
\end{newlem}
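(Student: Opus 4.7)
My plan is to split on whether $w = 0$ or $w \geq 1$; the latter case collapses entirely under a single congruence modulo $8$ combined with Mih\u{a}ilescu's theorem.

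If $w = 0$, the equation reads $2^y - 1 = 3^r$, which forces $r \geq 0$ since the left-hand side is a positive integer. The values $r = 0$ and $r = 1$ recover the two listed solutions $y = 1$ and $y = 2$, while for $r \geq 2$ we have $y \geq 3$, so $2^y - 3^r = 1$ contradicts Theorem~\ref{thm:Catalan}.

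Now assume $w \geq 1$. Since $x \geq 3$, we have $2^{x+1} \equiv 0 \pmod 8$, hence $(2^{x+1}+1)^w \equiv 1 \pmod 8$. Because $3$ is a unit modulo $8$ with $3^2 \equiv 1 \pmod 8$, we also have $3^r \equiv 1$ or $3 \pmod 8$ for every integer $r$ (positive, zero, or negative). Thus the right-hand side is $\equiv 1$ or $3 \pmod 8$, while $2^y - 1 \equiv 7 \pmod 8$ for all $y \geq 3$. This forces $y \in \{1, 2\}$, so $2^y - 1 \in \{1, 3\}$ and the equation reduces to
\[
	3^r (2^{x+1}+1)^w \in \{1, 3\}.
\]
Since $(2^{x+1}+1)^w \geq 17$ for $x \geq 3$ and $w \geq 1$, any $r \geq 0$ is excluded, so $r < 0$; rearranging shows that $(2^{x+1}+1)^w$ must be a positive power of $3$, and consequently $2^{x+1}+1 = 3^d$ for some integer $d \geq 1$. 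But then Theorem~\ref{thm:Catalan} applied to $3^d - 2^{x+1} = 1$ forces $d = 2$ and $x = 2$, contradicting $x \geq 3$.

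The only subtlety worth flagging is that the mod-$8$ argument remains valid even when $r$ is negative; this is precisely what allows the whole case $w \geq 1$ to be treated uniformly without splitting on the sign of $r$, and once this is noted, no substantive obstacle remains.
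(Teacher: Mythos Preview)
Your proof is correct and follows essentially the same approach as the paper: handle $w=0$ via Mih\u{a}ilescu, and for $w\geq 1$ use the congruence modulo $8$ (noting explicitly, as you do, that it remains valid for negative $r$) to force $y\leq 2$, then conclude that $2^{x+1}+1$ is a power of $3$ and invoke Mih\u{a}ilescu again. The only cosmetic difference is that you separate out the sign of $r$ before reaching the ``power of $3$'' conclusion, whereas the paper states that conclusion directly; the underlying argument is the same.
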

\begin{proof}
First, assume  that $w = 0$. Then \eqref{eq:prop_sit1_z0} becomes $2^y - 1 = 3^r$, and  Theorem \ref{thm:Catalan} implies that either $r=0$ and $y = 1$, or $r = 1$ and $y = 2$. This gives us the two solutions $(x,y,w,r) = (x,1,0,0)$ and $(x,y,w,r) = (x,2,0,1)$. 

Now assume that $w\geq 1$.
Since $x\geq 3$, we have that $(2^{x+1}+1)^w \equiv 1 \pmod{8}$. Moreover, $3^r \equiv 1,3 \pmod{8}$ (this holds also if $r$ is negative: $1^{-1} = 1$ and $3^{-1} = 3$ in the ring of integers modulo 8). Thus, equation \eqref{eq:prop_sit1_z0} implies $2^y - 1 \equiv 1,3 \pmod{8}$, so $y = 1$ or $y = 2$. 
Then the left hand side of \eqref{eq:prop_sit1_z0} is $2^y -1 \in \{1,3\}$, which implies that on the right hand side $(2^{x+1}+1)^w$ is a power of 3. This is impossible for $w\geq 1$ and $x\geq 3$, again from Theorem \ref{thm:Catalan}.
\end{proof}

Next, we consider the case $r = 0$ in Proposition~\ref{prop:sit1}.

\begin{newlem}\label{lem:sit1_r0}
The only solutions to the equation
\begin{equation}\label{eq:prop_sit1_r0}
	2^y ( 2^x + 1)^z - 1
	= (2^{x+1} + 1)^{w}
\end{equation}
in integers 
$x \geq 3$, $y \geq 1$, $z \geq 0$, and $w \geq 0$ are given by 
\[
(x,y,z,w) \in \{(x,1,0,0), (x,1,1,1)\}.
\]
\end{newlem}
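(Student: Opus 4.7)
The plan is to dispatch three boundary cases and then settle the remaining main case by a $2$-adic observation, an odd-prime lifting-the-exponent computation, and a crude size estimate. First, if $w=0$ the equation collapses to $2^y(2^x+1)^z=2$, which forces $z=0$ and $y=1$ since $2^x+1\ge 9$ is odd, producing the solution $(x,1,0,0)$. If instead $z=0$ with $w\ge 1$, then \eqref{eq:prop_sit1_r0} is the $r=0$ specialization of Lemma~\ref{lem:sit1_z0}, whose sole solutions $(x,1,0,0)$ and $(x,2,0,1)$ either have $w=0$ or $r=1$, so nothing new appears. From now on I assume $z,w\ge 1$.

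Set $v=2^x+1$ and $u=2^{x+1}+1=2v-1$; both are odd and at least $9$. Reducing \eqref{eq:prop_sit1_r0} modulo $v$ gives $u^w+1\equiv(-1)^w+1\pmod v$, which for $v\ge 9$ forces $w$ to be odd. The sub-case $w=1$ reduces the equation to $2^y v^z=2v$, yielding $(x,1,1,1)$. The main task is then to rule out $w\ge 3$ odd.

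In that range, with $u$ odd and $w$ odd, Lemma~\ref{lem:v2_oddexpo} gives $v_2(u^w+1)=v_2(u+1)=v_2(2v)=1$, which pins $y=1$. The equation becomes $2v^z=u^w+1$, equivalently
\[
v^{z-1}=\frac{u^w+1}{u+1}.
\]
The sub-case $z=1$ collapses this to $u^w=u$, i.e.\ $w=1$, a contradiction. For $z\ge 2$, I would apply Lemma~\ref{lem:vp+1} to every odd prime $p\mid v$ (noting $p\mid u+1=2v$): since $w$ is odd, $v_p(u^w+1)=v_p(w)+v_p(v)$, while $v_p(2v^z)=z\,v_p(v)$. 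Comparing, $v_p(w)=(z-1)v_p(v)$ for every such $p$, and since $v$ is odd this multiplies up to $v^{z-1}\mid w$, so $v^{z-1}\le w$.

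A size comparison finishes the job. From $2v^z=u^w+1>u^w$ and $u=2v-1$, if $z\le w$ then $(2-1/v)^w<2$, and for $v\ge 9$ this forces $w<\log 2/\log(17/9)<2$, contradicting $w\ge 3$. Hence $z\ge w+1$, and then $v^{z-1}\ge v^w\ge 2^w>w$, contradicting $v^{z-1}\le w$. The only delicate moment is spotting that the $2$-adic observation collapses the three-variable problem to the clean identity above; once $y=1$ is pinned, lifting-the-exponent at the odd primes of $v$ yields $v^{z-1}\mid w$, and the elementary inequality $z>w$ finishes everything with no need for linear-forms-in-logarithms machinery.
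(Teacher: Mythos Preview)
Your proof is correct and follows essentially the same route as the paper: pin down $y=1$ by a congruence, apply lifting-the-exponent at the odd primes dividing $2^x+1$ to relate $z$ and $w$, and finish with a size comparison. The paper gets $y=1$ in one stroke by reducing modulo $4$ (rather than first showing $w$ is odd modulo $v$ and then using the $2$-adic Lemma~\ref{lem:v2_oddexpo}), and it uses only a single prime $p\mid 2^x+1$ to obtain $z-1\le \log_p w$, whereas you aggregate over all such primes to the stronger $v^{z-1}\mid w$; either variant closes out with an elementary inequality.
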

\begin{proof}
Considering equation \eqref{eq:prop_sit1_r0} modulo 4, we immediately find that $y = 1$. Therefore, we have the equation
\begin{equation}\label{eq:prop_sit1_r0_2}
	2 (2^x + 1)^{z}
	= (2^{x+1} + 1)^w + 1,
\end{equation}
and recall that $x \geq 3$, $z \geq 0$ and $w \geq 0$.

First, note that if $z = 0$ or $w = 0$, or if  $z = 1$ or $w = 1$, we immediately obtain the solutions $(x,y,z,w) = (x,1,0,0)$
and $(x,y,z,w) = (x,1,1,1)$, respectively. From now on, assume that $z\geq 2$ and $w\geq 2$.

Let $p>2$ be a prime factor of $2^x + 1$.  From the left hand side of \eqref{eq:prop_sit1_r0_2} we have
\[
	v_p\left(2 (2^x + 1)^{z}\right)
	= z \cdot v_p(2^x + 1).
\]
Considering  the right hand side, note that $(2^{x+1} + 1) + 1 = 2\cdot (2^x + 1)$ is divisible by $p$, so from Lemma~\ref{lem:vp+1},
\begin{align*}
	v_p \left( (2^{x+1} + 1)^w + 1 \right)
	&= v_p((2^{x+1} + 1) + 1) + v_p(w)
	= v_p(2^{x} + 1) + v_p(w).
\end{align*}
Comparing the valuations of both sides of \eqref{eq:prop_sit1_r0_2}, we obtain
\[
	z \cdot  v_p(2^x + 1)
	= v_p(2^x + 1) + v_p(w),
\]
and so 
\[
	z - 1
	\leq (z-1) \cdot v_p(2^x+1)
	= v_p(w)
	\leq \log_p(w).
\]
Roughly speaking, this means that $z$ is much smaller than $w$, contradicting equation \eqref{eq:prop_sit1_r0_2}. We now quantify this argument.
We estimate both sides of \eqref{eq:prop_sit1_r0_2}. Noting that for $x\geq 3$ we have $2^x+1 \leq 2^{x+0.2}$, we obtain
\[
	2^{(x+1)w}
	\leq (2^{x+1} + 1)^w + 1
	= 2 (2^x + 1)^{z}
	\leq 2(2^{x+ 0.2})^{z}
	= 2^{1 + (x + 0.2)z}.
\]
Thus, using $z \leq v_p(w) +1$, we obtain
\[
	(x+1)w
	\leq 1 + (x + 0.2)z
	\leq 1 + (x+0.2)(v_p(w) +1).
\]
Since we are assuming $w\geq 2$, we have in particular $1 \leq 0.8 w$. Thus, the above inequality implies
\[
	(x+0.2) w \leq (x+0.2)(v_p(w) +1),
\]
and so $w\leq v_p(w) + 1$, which is impossible for any $p>2$ and $w\geq 2$.
\end{proof}

We continue with the case $z=0$ in Proposition \ref{prop:sit2}.

\begin{newlem}\label{lem:sit2_z0}
The only solutions to the equation
\begin{equation}\label{eq:prop_sit2_z0}
	2^y + 1
	= 3^{r} (2^{x+1} - 1)^{w}
\end{equation}
in integers 
$x \geq 3$, $y \geq 1$, $w \geq 0$, and $r$ arbitrary are given by 
\[
(x,y,w,r) \in \{(3,2,1,-1), (x,1,0,1), (x,3,0,2)\}.
\]
\end{newlem}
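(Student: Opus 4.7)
The plan is to mirror the structure of the proofs of Lemmas \ref{lem:sit1_z0} and \ref{lem:sit1_r0}, splitting into cases based on $w$ and $y$. First, if $w = 0$, then equation \eqref{eq:prop_sit2_z0} becomes $2^y + 1 = 3^r$, which forces $r \geq 1$, and Theorem \ref{thm:Catalan} yields exactly $(y, r) \in \{(1, 1), (3, 2)\}$, giving the two families $(x, 1, 0, 1)$ and $(x, 3, 0, 2)$. Assume henceforth $w \geq 1$. For $y = 1$, one has $3 = 3^r (2^{x+1}-1)^w$, forcing $2^{x+1}-1$ to be a power of $3$, which is ruled out for $x \geq 3$ by Theorem \ref{thm:Catalan}. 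For $y = 2$, since $(2^{x+1}-1)^w \geq 15 > 5$ we have $r < 0$, and comparing $5$-adic valuations in $3^{|r|} \cdot 5 = (2^{x+1}-1)^w$ forces $w = 1$ and $2^{x+1} - 1 = 5 \cdot 3^{|r|}$. The congruence $3 \mid 2^{x+1}-1$ gives $x+1 = 2m$ with $m \geq 2$, so $(2^m-1)(2^m+1) = 5 \cdot 3^{|r|}$; since the two factors are coprime, Theorem \ref{thm:Catalan} applied to each possible coprime split leaves only $m = 2$, $|r| = 1$, yielding the solution $(3, 2, 1, -1)$.

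The main case is $y \geq 3$, $w \geq 1$. I would first show $r < 0$: if $r \geq 0$ then $(2^{x+1}-1) \mid (2^y + 1)$, and since the order of $2$ modulo $2^{x+1}-1$ equals $x+1$, this requires $x+1 = 2m$ to be even and $y$ to be an odd multiple of $m$; but then $2^y + 1 \equiv (2^m)^{\mathrm{odd}} + 1 \equiv 2 \pmod{2^m - 1}$, so $(2^m - 1) \nmid (2^y + 1)$ for $m \geq 2$, a contradiction. Next, if $x$ were even, any prime $p \mid 2^{x+1}-1$ would have order dividing the odd number $x+1$, hence could not divide $2^y + 1$; so $x$ must be odd. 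Writing $x + 1 = 2m$, $m \geq 2$, the equation becomes
\[
    3^{-r}(2^y + 1) = (2^m - 1)^w (2^m + 1)^w.
\]
For $m \neq 3$, Lemma \ref{lem:Zsigmondy} supplies a primitive prime divisor $p$ of $2^{2m}-1$; necessarily $p \geq 5$, and $p \mid 2^m + 1$ with $p \nmid 2^m - 1$. Comparing $v_p$ of both sides via Lemma \ref{lem:vp+1} yields $y = m(2t+1)$ and $(w-1)\, v_p(2^m+1) = v_p(2t+1)$, so $2t + 1 \geq p^{w-1} \geq 5^{w-1}$, hence $y \geq m \cdot 5^{w-1}$. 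The trivial bound $2^y < (2^{2m})^w$ gives $y < 2mw$, and combining forces $5^{w-1} < 2w$, so $w = 1$. Then $y < 2m$ combined with $y = m(2t+1)$ gives $t = 0$ and $y = m$, reducing the equation to $3^{-r} = 2^m - 1$; by Theorem \ref{thm:Catalan} this requires $m \in \{1, 2\}$, both contradicting $y \geq 3$. The Zsigmondy exception $m = 3$ (i.e., $x = 5$) is disposed of directly: $7 \mid 2^{x+1} - 1 = 63$, but $\mathrm{ord}_7(2) = 3$ is odd, so $7 \nmid 2^y + 1$, contradicting $w \geq 1$.

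The most delicate step is this final one: applying the lifting-the-exponent formula of Lemma \ref{lem:vp+1} to a Zsigmondy primitive divisor of $2^{2m} - 1$ in order to bound $w$ via $5^{w-1} < 2w$, while correctly handling the Zsigmondy exception at $m = 3$. The remaining steps reduce to elementary modular and size arguments combined with appeals to Mihailescu's theorem.
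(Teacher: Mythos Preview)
Your proof is correct, but for the main case $y\geq 3$, $w\geq 1$ it takes a substantially longer route than the paper's. The paper simply reduces \eqref{eq:prop_sit2_z0} modulo $8$: since $x\geq 3$ gives $2^{x+1}-1\equiv -1\pmod 8$ and $y\geq 3$ gives $2^y+1\equiv 1\pmod 8$, one finds $3^r(-1)^w\equiv 1\pmod 8$, forcing both $r$ and $w$ even; the right-hand side is then a perfect square, and Theorem~\ref{thm:Catalan} finishes (the residual possibility $2^3+1=9$ dies because $(2^{x+1}-1)^w$ would have to be a power of~$3$). Your argument instead establishes $r<0$ via the order of $2$ modulo $2^{x+1}-1$, fixes the parity of $x$, and then combines a Zsigmondy primitive prime with lifting-the-exponent to force $w=1$ --- all sound, but a good deal more machinery for the same conclusion. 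Your handling of $y=2$ (via the $5$-adic valuation and the coprime splitting $(2^m-1)(2^m+1)$) is likewise a valid alternative to the paper's one-line appeal to Zsigmondy on $2^{x+1}-1=3^{|r|}\cdot 5$. One small remark: the step ``the order of $2$ modulo $2^{x+1}-1$ equals $x+1$, so $x+1=2m$ and $y$ is an odd multiple of $m$'' is correct but compressed --- it really uses $(x+1)\mid 2y$ and $(x+1)\nmid y$, from which $m\mid y$ with odd quotient follows --- and would read more cleanly with that line spelled out.
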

\begin{proof}
Assume for a moment that $w = 0$. Then \eqref{eq:prop_sit2_z0} becomes $2^y + 1 = 3^r$, and Theorem~\ref{thm:Catalan} implies  that either $r=1$ and $y = 1$, or $r = 2$ and $y = 3$. 
This gives us the two solutions $(x,y,w,r) = (x,1,0,1)$ and $(x,y,w,r) = (x,3,0,2)$. 
Now assume that $w\geq 1$.

If $y = 1$, then $2^y + 1 = 3$, and so $2^{x+1} - 1$ has to be a power of 3, contradicting Theorem~\ref{thm:Catalan} for $x \geq 2$.
If $y = 2$, then $2^y + 1 = 5$, so $w=1$ and $2^{x+1} - 1 = 3^{|r|}\cdot 5$. Since $2^4 - 1 = 15 = 3\cdot 5$, by Lemma \ref{lem:Zsigmondy}, we find that $x+1\leq 4$. Indeed, $x=3$ gives us the solution $(x,y,w,r) = (3,2,1,-1)$.

Finally, assume that $y\geq 3$. We consider equation \eqref{eq:prop_sit2_z0} modulo 8:
\[
	1 \equiv 3^r (-1)^w \pmod{8}.
\]
This implies that $r$ and $w$ are both even.
But then the right hand side of \eqref{eq:prop_sit2_z0} is a square, contrary to Theorem \ref{thm:Catalan}.
\end{proof}

Finally, we solve the case $r=0$ in Proposition~\ref{prop:sit2}.

\begin{newlem}\label{lem:sit2_r0}
The only solutions to the equation
\begin{equation}\label{eq:prop_sit2_r0}
	2^y ( 2^x - 1)^z + 1
	= (2^{x+1} - 1)^{w},
\end{equation}
in integers 
$x \geq 3$, $y \geq 1$, $z \geq 0$, and $w \geq 0$ are given by 
\[
	(x,y,z,w) \in \{(x, x+2, 1, 2), (x,1,1,1)\}.
\]
\end{newlem}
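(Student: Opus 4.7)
The plan is to rewrite the equation as $(2^{x+1}-1)^w - 1 = 2^y(2^x-1)^z$ and exploit the telescoping identity
\[
(2^{x+1}-1)^w - 1 = 2(2^x-1)\,S_w, \qquad S_w := \sum_{k=0}^{w-1}(2^{x+1}-1)^k,
\]
which (for $w \geq 1,\ z \geq 1$) reduces the problem to $2^{y-1}(2^x-1)^{z-1} = S_w$. The degenerate cases are immediate: $w = 0$ is impossible; $w = 1$ forces $2^y(2^x-1)^z = 2(2^x-1)$, hence $y = z = 1$, yielding $(x,1,1,1)$; and $z = 0$ with $w \geq 2$ reduces (after a mod-$4$ argument forcing $w$ to be even) to writing $2^y = (A-1)(A+1)$ with $A = (2^{x+1}-1)^{w/2}$, from which both factors must be powers of $2$ differing by $2$, so $A = 3$, impossible for $x \geq 3$.

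For $w \geq 2,\ z \geq 1$, I extract two $p$-adic identities. By Lemma~\ref{lem:v2_powers-1}, together with $(2^{x+1}-1)^2 - 1 = 2^{x+2}(2^x-1)$,
\[
v_2\bigl((2^{x+1}-1)^w - 1\bigr) = \begin{cases} 1 & \text{if $w$ is odd,} \\ x+1+v_2(w) & \text{if $w$ is even.} \end{cases}
\]
Matching with $v_2(2^y(2^x-1)^z) = y$ forces $y = 1$ in the first case and $y = x+1+v_2(w)$ in the second. For every odd prime $q \mid 2^x-1$, the congruence $2^{x+1}-1 \equiv 1 \pmod q$ lets Lemma~\ref{lem:vp-1} yield
\[
v_q\bigl((2^{x+1}-1)^w - 1\bigr) = v_q(2(2^x-1)) + v_q(w) = v_q(2^x-1) + v_q(w),
\]
which combined with $v_q(2^y(2^x-1)^z) = z\,v_q(2^x-1)$ gives $v_q(w) = (z-1)v_q(2^x-1)$. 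Multiplying over all such $q$ produces the crucial divisibility $(2^x-1)^{z-1} \mid w$.

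These valuation identities together with the elementary size estimates
\[
x(w-z) < y < x(w-z) + w + z,
\]
coming from $2^{xw} < (2^{x+1}-1)^w \leq 2^{y+xz}$ and $2^{y+(x-1)z} < (2^{x+1}-1)^w < 2^{(x+1)w}$, close the argument. For $z = 1$: the case $w$ odd collapses to $w = 1$; for $w$ even, the size bounds combined with $y = x+1+v_2(w)$ yield $(x+1)w \leq 2x + 3 + v_2(w) \leq 2x+3+\log_2 w$, forcing $w = 2$, which gives $y = x+2$ and the solution $(x, x+2, 1, 2)$. For $z \geq 2$, write $d := w - z$. If $w$ is odd then $y = 1$ forces $d \leq 0$, but $(2^x-1)^{z-1} \mid w \leq z$ gives $7^{z-1} \leq z$, impossible. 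If $w$ is even, the lower bound $y > xd$ combined with $y = x+1+v_2(w)$ yields (for $d \geq 1$) $v_2(w) \geq x(d-1)$, so
\[
w \geq 2^{v_2(w)}(2^x-1)^{z-1} \geq 2^{x(d-1)} \cdot 7^{z-1};
\]
but $w = z+d$, and this fails for $d \geq 2$, $x \geq 3$, $z \geq 2$. The cases $d \in \{0, 1\}$ (when $w$ is even) give $7^{z-1} \leq w = z + d \leq z+1$, also impossible for $z \geq 2$.

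The main obstacle is extracting the $q$-adic divisibility $(2^x-1)^{z-1} \mid w$, which is essential to force $w$ to be large when $z \geq 2$; once it is in hand, the remaining bookkeeping by parity of $w$ and sign of $d$ is routine, and the proof uses no lower bounds for linear forms in logarithms.
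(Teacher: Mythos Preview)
Your proof is correct and follows essentially the same approach as the paper: both arguments rewrite the equation as $(2^{x+1}-1)^w-1=2^y(2^x-1)^z$, compute the $2$-adic valuation via Lemma~\ref{lem:v2_powers-1} and the $p$-adic valuation for primes $p\mid 2^x-1$ via Lemma~\ref{lem:vp-1}, and finish with a size comparison. Your version is slightly sharper in that you extract the full divisibility $(2^x-1)^{z-1}\mid w$ (by multiplying over all prime divisors of $2^x-1$) rather than the paper's single-prime inequality $z-1\le v_p(w)$, and you keep track of the exact value of $y$ according to the parity of $w$; the paper instead feeds the one-prime bound and the $2$-adic bound into a single inequality to obtain $w\le 4$ and then checks $w\in\{1,2,3,4\}$ by hand. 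Two small clean-ups: in the $z=1$, $w$ even case, the lower size bound $x(w-1)<y=x+1+v_2(w)$ gives $xw\le 2x+v_2(w)$ (your displayed inequality $(x+1)w\le 2x+3+v_2(w)$ is not what follows), and this already forces $w=2$ for $x\ge 3$; and for $z\ge 2$ with $w$ even you should also dispose of $d\le -1$, which is immediate from $7^{z-1}\le (2^x-1)^{z-1}\le w\le z-1$.
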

\begin{proof}
By Lemma~\ref{lem:sit2_z0}, we may assume $z\geq 1$.
We rewrite equation \eqref{eq:prop_sit2_r0} as
\begin{equation}\label{eq:prop_sit2_r0_proof}
2^y ( 2^x - 1)^z
	= (2^{x+1} - 1)^{w} -1.
\end{equation}
Clearly $w= 0$ is impossible.
Considering the right hand side of \eqref{eq:prop_sit2_r0_proof},  Lemma~\ref{lem:v2_powers-1} implies that
\begin{align*}
	 v_2 ((2^{x+1} - 1)^{w} - 1)
	\leq v_2 ((2^{x+1} - 1)^{2} - 1 ) + v_2(w) - 1
	= x + 1 + v_2 (w),
\end{align*}
whence
\begin{equation}\label{eq:ybound}
	y \leq x + 1 + v_2 (w).
\end{equation}

Next, let $p>2$ be a prime factor of $2^x-1$. Then the $p$-adic valuation of the left hand side of \eqref{eq:prop_sit2_r0_proof} is $z \cdot v_p(2^x - 1)$. For the right hand side, note that $2^{x+1} - 1 = 2(2^x -1) + 1 \equiv 1 \pmod{p}$, so we get from Lemma~\ref{lem:vp-1} that
\begin{align*}
	v_p((2^{x+1} - 1)^{w} - 1)
	&= v_p((2^{x+1} - 1) - 1) + v_p(w)\\
	&= v_p(2 \cdot (2^x - 1)) + v_p(w)
	= v_p(2^x - 1) + v_p(w).
\end{align*}
Comparing both sides of \eqref{eq:prop_sit2_r0_proof}, we conclude that 
 \[
	z \cdot v_p(2^x - 1) 
	= v_p(2^x - 1) + v_p(w),
\]
and so
\begin{equation}\label{eq:zbound}
	z-1 
	\leq (z-1) \cdot  v_p(2^x -1)
	= v_p(w).
\end{equation}
Since $z\geq 1$ and $x\geq 3$, we find from \eqref{eq:prop_sit2_r0} that
\begin{align*}
	2^{(x+0.9)w}
	\leq (2^{x+1} - 1)^{w}
	= 2^y ( 2^x - 1)^z + 1
	\leq 2^{y + xz}.
\end{align*}
Thus,
\[
	(x+0.9) \cdot w
	\leq y + xz,
\]
and combining this with \eqref{eq:ybound} and \eqref{eq:zbound}, 

\begin{align*}
	(x+0.9) \cdot w
	&\leq (x + 1 + v_2(w)) + x \cdot (v_p(w) + 1)\\
	&\leq (x+ 0.9) \cdot \left(v_p(w) + 2 + \frac{1+v_2(w)}{3.9}\right),
\end{align*}
and so
\begin{equation*}
	w 
	\leq v_p(w) + 2 + \frac{1+v_2(w)}{3.9}
	\leq \log_3(w) + 2 + \frac{1+\log_2(w)}{3.9}.
\end{equation*}
We conclude that $w \leq 4$. 

For $w\leq 4$, we have that ${1+\log_2(w)}< {3.9}$, so the above inequality actually implies 
\begin{equation}\label{eq:wdone}
	w \leq v_p(w) + 2,
\end{equation} 
where $p$ is any prime dividing $2^x - 1$. 

If $w = 4$, we immediately get a contradiction from \eqref{eq:wdone}. 

If $w = 3$, inequality \eqref{eq:wdone} leads to a contradiction for any $p>3$, and it is clear that $2^x-1$ always has a prime factor larger than 3 for $x\geq 3$.

If $w = 2$, we get $ y = v_2((2^{x+1} - 1)^{2} - 1) = x+2$, and \eqref{eq:prop_sit2_r0_proof} becomes
\[
	2^{x+2}(2^x -1)^z 
	= (2^{x+1} - 1)^{2} -1
	= 2^{x+2}(2^{x} - 1).
\]
Thus, $z = 1$, and we have the solution $(x,y,z,w) = (x, x+2, 1, 2)$.

Finally, assume that $w = 1$. Then,
considering \eqref{eq:prop_sit2_r0_proof} modulo 4, we see that $y = 1$, and the equation  becomes $2 \cdot (2^x -1)^z = (2^{x+1} - 1)^{1} -1 = 2^{x+1} - 2$. This leads to the solution $(x,y,z,w) = (x,1,1,1)$. 
\end{proof}

Now, in order to prove Propositions \ref{prop:sit1} and \ref{prop:sit2},
we need to show that there are no solutions to equations \eqref{eq:prop_sit1} and \eqref{eq:prop_sit2} in the remaining cases.

\begin{proof} [Proof of Propositions \ref{prop:sit1} and \ref{prop:sit2}]
We need to solve the two equations
\begin{equation}\label{eq:prop_proof}
	2^y ( 2^x \pm 1)^z \mp 1
	= 3^{r} (2^{x+1} \pm 1)^{w},
\end{equation}
in integers 
$x \geq 3$, $y \geq 1$, $z \geq 0$, $w \geq 0$, and $r$ arbitrary, and in view of Lemmata \ref{lem:sit1_z0}, \ref{lem:sit1_r0}, \ref{lem:sit2_z0} and \ref{lem:sit2_r0} we may assume $z\geq 1$ and $r\neq 0$.
 
We set
\[
	M := \max\{y,z,w,10^{20}\}.
\]

First, we bound $|r|$ in terms of $x$ and $M$.
If $r$ is positive, then we have
\[
	3^r
	\leq 
	3^{r} (2^{x+1} \pm 1)^{w}
	= 2^y ( 2^x \pm 1)^z \pm 1
	< 2^{y + (x + 0.2)z + 0.2},
\]
and so 
\begin{align*}
	r 
	&< (y + (x + 0.2)z + 0.2) \cdot \frac{\log 2}{\log 3} \\
	&< (M + (x + 0.2)M + 0.2)\cdot \frac{\log 2}{\log 3}
	< M x.
\end{align*}
If $r$ is negative, then since all numbers in \eqref{eq:prop_proof} are integers, we must have
\begin{align*}
	|r|
	&\leq v_3((2^{x+1} \pm 1)^{w})
	= w \cdot  v_3(2^{x+1} \pm 1)\\
	&\leq w \cdot (v_3(x+1) + 1)
	< w \cdot (\log (x+1) + 1)
	< M x,
\end{align*}
where we used Lemma~\ref{lem:v3}.
Overall, we have
\begin{equation}\label{eq:prop_rbound1}
|r|
< M x.
\end{equation}

Now we construct a linear form in logarithms from the main equation \eqref{eq:prop_proof}: Subtracting $2^y(2^x \pm 1)^z$ and taking absolute values, we get
\[
	 1
	= | 3^{r} (2^{x+1} \pm 1)^{w} - 2^y ( 2^x \pm 1)^z |.
\]
Dividing by $2^y(2^x \pm 1)^z$ and using $|\log \xi |< 2 |\xi -1 |$ for $|\xi -1| <0.5$, we obtain
\begin{multline}\label{eq:prop_linform}
	|\Lambda|
	:=\left|
		r \log 3 + w \log (2^{x+1} \pm 1)
		- y \log 2 - z \log (2^x \pm 1)
	\right|\\
	< \frac{2}{2^y ( 2^x \pm 1)^z}
	< \frac{1}{2^{y + (x-0.2) z -1}}.
\end{multline}
For later, note that by construction $\Lambda \neq 0$.

We do not have any bounds for $\log(2^{x+1} \pm 1)$ and $\log(2^{x} \pm 1)$ yet. To obtain these, we will construct another linear form in logarithms. We use the $L$-notation, where by $L(a)$ we mean a number which has absolute value at most $a$. Since $1/(2^b - 1) < 2^{-b + 0.2}$ for $b\geq 3$,
\begin{align*}
	\log (2^{x+1} \pm 1) &= \log (2^{x+1}) + L(2^{-(x+1) + 0.2}) = (x+1) \log 2 + L(2^{-x - 0.8}),\\
\log (2^x \pm 1) &= \log (2^x) + L(2^{-x + 0.2}) = x \log 2 + L(2^{-x + 0.2}).	
\end{align*}
Thus, we obtain from \eqref{eq:prop_linform} that
\begin{multline}\label{eq:prop_linform_simpl}
		|\Lambda'| :=
		\left|
		r \log 3 
		+ (w(x+1) - y - zx)\log 2 
	\right|\\
	< \frac{1}{2^{y + (x-0.2) z -1}} + \frac{w}{2^{x+ 0.8}} + \frac{z}{2^{x - 0.2}} 
	< \frac{2M}{2^{x-0.2}}.
\end{multline}
Note that $\Lambda' \neq 0$ because $r\neq 0$ and 2 and 3 are multiplicatively independent.

If $2^{x-0.2} < 4M$, then $x < (\log 4 + \log M + 0.2) / \log 2$, and we can immediately skip to \eqref{eq:x2_sit2}. Assume that $2^{x-0.2} \geq 4M$. Then $|\Lambda'| < 1/2$. In view of \eqref{eq:prop_rbound1}, both coefficients in $|\Lambda'|$ are  bounded by $(x+1)M < 1.34 \cdot x \cdot M$.

We apply Lemma \ref{cor:Laurent} to $|\Lambda'|$ with $a_1 = 3$, $b_1 = r$, $a_2 = 2$, $b_2 = w(x+1) - y - zx$:
\begin{align*}
	\log A_1 & = \log 3 = \max\{\log 3, 1\},\\
	\log A_2 & = 1 =  \max\{\log 2, 1\},\\
	\log b'
		& = \log\left(\frac{|r|}{1} + \frac{|w(x+1) - y - zx|}{\log 3} \right)
		< \log (2 \cdot x \cdot M ).
\end{align*}
Note that
$\log (2 \cdot x \cdot M ) \geq \log (2 \cdot 3 \cdot 10^{20})> 47$,
 so Lemma \ref{cor:Laurent} gives us
\[
	\log |\Lambda'|
	\geq - 17.9 \cdot (\log (2 \cdot x \cdot M ) + 0.38)^2 \cdot \log 3 \cdot 1.
\]
Comparing this with the bound from \eqref{eq:prop_linform_simpl}, we obtain
\[
	(x - 0.2) \log 2 < \log (2M) + 17.9 \cdot (\log (2 \cdot x \cdot M ) + 0.38)^2 \cdot \log 3.
\]
This inequality implies a bound of the shape $x \ll (\log M)^2$; we now compute the constant. Collecting some of the terms and constants, we get
\[
	x \cdot 0.93 \cdot \log 2 < 20.6 \cdot ( \log x + \log M)^2,
\]
and dividing by $0.93 \cdot \log 2$
\begin{equation}\label{eq:x_sit2}
	x < 32 \cdot ( \log M + \log x)^2.
\end{equation}
Taking logarithms and using $\log M \geq \log (10^{20})>46$, we get
\begin{align*}
	\log x 
	&< 3.47 + 2 \cdot (\log \log M + \frac{\log x}{46})
	< 3.47 + 2 \cdot \log \log M + 0.05 \log x.
\end{align*}
Subtracting $0.05 \log x$, and dividing by $0.95$, we obtain
\[
	\log x < 3.66 + 2.11 \log \log M.
\]
Plugging into \eqref{eq:x_sit2} yields
\begin{align}\label{eq:x2_sit2}
	x &< 32  \cdot (\log M + 3.66 + 2.11 \log \log M)^2
	\nonumber \\
	&< 32 \cdot (1.26 \log M)^2
	< 50.9 \cdot (\log M)^2.
\end{align}

Now that we have a bound for $x$ in terms of $\log M$, we go back to \eqref{eq:prop_linform} and apply \hyperref[thm:Matveev]{Matveev's theorem} to $|\Lambda|$.
Note that by \eqref{eq:prop_rbound1} $r$ is now bounded by
\[
	|r| 
	< 50.9 \cdot M \cdot (\log M)^2.
\]
The other coefficients in $\Lambda$ are bounded by $M$.
Thus, we obtain
\begin{align*}
	\log |\Lambda|
	&> - C_{\text{Matv},4} \cdot ( 1+ \log (50.9 \cdot M \cdot (\log M)^2)) \cdot \log 3 \cdot \log(2^{x+1} + 1) \cdot \log 2 \cdot \log (2^x+1)\\
	& > - 8.6 \cdot 10^{12} \cdot ( 1+ \log (50.9 \cdot M \cdot (\log M)^2)) \cdot x^2\\
	& > - 2.23 \cdot 10^{16} \cdot ( 1+ \log (50.9 \cdot M \cdot (\log M)^2)) (\log M)^4 ,
\end{align*}
where we used \eqref{eq:x2_sit2} for the last estimation.
Combining this with the bound from \eqref{eq:prop_linform},
\[
	\log |\Lambda|
	< - (y + (x-0.2) z -1) \log 2,
\]
we obtain, upon division by $\log 2$,
\begin{equation}\label{eq:prop_yzx}
	y + (x-0.2) z -1
	< 3.22 \cdot 10^{16} \cdot ( 1+ \log (50.9 \cdot M \cdot (\log M)^2)) (\log M)^4.
\end{equation}
In particular, this bounds $y$ and $z$. 
In order to get a bound for $w$ as well, we return to equation \eqref{eq:prop_proof}:
\begin{equation*}\label{eq:prop_rep}
	2^y ( 2^x \pm 1)^z \mp 1
	= 3^{r} (2^{x+1} \pm 1)^{w}.
\end{equation*}
Since $(2^{x+1} \pm 1)$ cannot be a power of 3, $(2^{x+1} \pm 1)$ is divisible by at least one prime factor $p\geq 5$, whence  the right hand side of the above equation is at least $5^w$. 
But then we have
\[
	5^w
	\leq 3^{r} (2^{x+1} \pm 1)^{w}
	= 2^y ( 2^x \pm 1)^z \mp 1
	\leq 2^{y + (x + 0.2)  z},
\]
which implies
\begin{equation}\label{eq:wbound}
	w
	< (y + (x + 0.2)  z) \cdot \frac{\log 2}{\log 5}
	< y + (x-0.2) z -1,
\end{equation}
so $w$ is bounded by the bound in \eqref{eq:prop_yzx} as well.
Overall, we have
\[
	 M
	 = \max\{y,z,w,10^{20}\}
	 < 3.22 \cdot 10^{16} \cdot ( 1+ \log (50.9 \cdot M \cdot (\log M)^2)) (\log M)^4.
\]
Solving this inequality, we have that
\begin{equation}\label{eq:prop_Mbound}
	M < 2.72 \cdot 10 ^{25}.
\end{equation}
Then we obtain from \eqref{eq:x2_sit2} that
\begin{equation}\label{eq:prop_xbound}
	x < 50.9 \cdot (\log M)^2
	< 1.75 \cdot 10^5.
\end{equation}

Next, we reduce the bound for $x$ by returning  to \eqref{eq:prop_linform_simpl} and computing the continued fraction of $\log 3 / \log 2$. 
From \eqref{eq:prop_linform_simpl} we have
\[
	\left| 
		\frac{\log 3}{\log 2} - \frac{w(x+1) - y - zx}{-r}
	\right|
	< \frac{2M}{2^{x-0.2} \log 2}.
\] 
By \eqref{eq:prop_rbound1}, \eqref{eq:prop_Mbound} and \eqref{eq:prop_xbound}, the denominator in the above approximation is bounded by
\[
	|r|
	<  M x
	< 4.76 \cdot 10^{30}.
\]
Then, computing the smallest convergent to $\log 3/ \log 2$ with denominator at least $4.76 \cdot 10^{30}$, we get that
\[
	3.43 \cdot 10^{-64} 	
	<\left| 
		\frac{\log 3}{\log 2} - \frac{p_{61}}{q_{61}}
	\right|	
	< \left| 
		\frac{\log 3}{\log 2} - \frac{w(x+1) - y - zx}{-r}
	\right|
	< \frac{2M}{2^{x-0.2} \log 2},
\] 
and so
\[
	x < \frac{- \log(3.43 \cdot 10^{-64}) + \log (2 \cdot 2.72 \cdot 10 ^{25}) - \log \log 2}{\log 2}
	< 297.
\]

Finally, we need to solve the two equations \eqref{eq:prop_proof} for each $3 \leq x \leq 296$:
\begin{align}
2^y ( 2^x + 1)^z - 1 &= 3^{r} (2^{x+1} + 1)^{w}, \label{eq:plus}\\
2^y ( 2^x - 1)^z + 1 &= 3^{r} (2^{x+1} - 1)^{w}. \label{eq:minus}
\end{align}
First, note that we have restrictions on the parity of $x$: In the case of \eqref{eq:plus}, assume for a moment that $x$ is odd. Then $3\mid 2^x+1$, so the left hand side of \eqref{eq:plus} is not divisible by 3. Moreover, $2^{x+1} + 1$ on the right hand side of \eqref{eq:plus} is not divisible by 3. But then  $r = 0$, which has been dealt with in Lemma~\ref{lem:sit1_r0}. Therefore, it suffices to consider even $x$. By an analogous argument, in the case of \eqref{eq:minus} it suffices to consider odd $x$.

Next, recall that by \eqref{eq:prop_rbound1} and \eqref{eq:prop_Mbound} we have
\[
	|r|
	< M x
	<  2.73 \cdot 10 ^{25}  \cdot 296
	< 8.06 \cdot 10^{27}.
\]
Now for each $3 \leq x \leq 296$ (with the parity restrictions), we do the following. We apply Lemma \ref{lem:LLL}
to find a good lower bound for $|\Lambda|$ (all coefficients are bounded by $8.06 \cdot 10^{27}$). Let us denote the lower bound by $\ell(x)$, i.e.\ the LLL algorithm gives us
\[
	|\Lambda| > \ell(x)
\]
for every $3 \leq x \leq 296$.
Thus, from \eqref{eq:prop_linform} we get
\[
(y + (x-0.2) z -1) \log 2 
	< - \log (\ell(x)),
\]
and so 
\[
	y + (x-0.2) z -1
	< - \frac{\log \ell(x)}{\log 2}
	=: U(x).
\]
Then for each $x$, we only need to check all possible solutions with $y + (x-0.2) z -1 < U(x)$ and $y, z\geq 1$. 
For each such triple $(x,y,z)$ we do the following. We compute the left hand side of \eqref{eq:plus} or \eqref{eq:minus} and we cancel out all powers of 3, i.e.\ we compute
\[
	(2^y ( 2^x \pm 1)^z \mp 1) \cdot 3^{- v_3(2^y ( 2^x \pm 1)^z \mp 1)}
	=: q(x,y,z).
\]
Note that this is an easy computation even for very large numbers where we do not know the prime factorization. 
Then equation \eqref{eq:plus} or \eqref{eq:minus} gives us
\[
	q(x,y,z) = \left( (2^{x+1} \pm 1) \cdot 3^{- v_3(2^{x+1} \pm 1) } \right) ^w,
\]
and we can easily check whether $q(x,y,z)$ is a power of $(2^{x+1} \pm 1)\cdot 3^{- v_3(2^{x+1} \pm 1)}$.

A quick computation using SageMath \cite{sagemath} reveals that in all cases we get a bound $U(x) < 449$, and that there are no solutions except for those with $y = z = 1$ and (in the case of \eqref{eq:minus}) $y=x+2$, $z = 1$. If $y = z = 1$, we have $2^1 \cdot (2^x \pm 1) ^1 \mp 1 = 2^{x+1} \pm 1$, and if $y=x+2$, $z = 1$, we have $2^{x+2} \cdot (2^x - 1) ^1 + 1 = (2^{x+1} - 1)^2$. Thus, in all the exceptional solutions that show up we have $r=0$, which we have dealt with in Lemmata \ref{lem:sit1_r0} and \ref{lem:sit2_r0} and excluded at the beginning of the proof of Proposition \ref{prop:sit1} and \ref{prop:sit2}.
\end{proof}

\begin{rem}[on the LLL reduction in the proof of Propositions \ref{prop:sit1} and \ref{prop:sit2}]
As $x$ grows, the logarithms $\log(2^x \pm 1)$, $\log(2^{x+1} \pm 1)$ get very close to multiples of $\log 2$. Therefore, the constant $C$ needs to be larger than the usual $M^4$. Usually, one starts with $C \approx M^4$ and then increases $C$ (for example by a factor of 10) until the reduction works. In our computations, for each $x$ we start with the $C$ from the previous $x-1$. This may make $C$ a bit larger than necessary, but it saves time and still leads to very good bounds.

The Sage code for the LLL reduction is available at: \url{https://cocalc.com/IngridVukusic/ConsecutiveTriples/LLLreduction}
\end{rem}

\section{Proof of Theorem~\ref{thm:a2}}\label{sec:a2}

In this section, we prove Theorem~\ref{thm:a2}, i.e., we prove that if we have integers $2<b<c$ such that $(2,b,c)$, $(3,b+1,c+1)$ and $(4,b+2,c+2)$ are each multiplicatively dependent, then
either
\[
	(2,b,c) \in \{(2,8,2^x 5^y - 2)
	\colon x,y \in \Z_{\geq 0},
	2^x 5^y - 2 > 8
	\}
	\cup \{(2,3,8), (2,4,8), (2,6,8)\}
\]
or
\[
	(2,b,c) \in \{
	(2, 2^z-2, 2^{2z}-2^{z+1}) 
	\colon z \in \Z_{\geq 3}
	\}
\]
or
\[
	(2,b,c) \in \{
	(2,4,14), (2,6,16)
	\}.
\]

We start by checking some small exceptional cases. Then, in Section~\ref{sec:a2_notationParity}, we introduce our notation for the triples and prove some parity conditions.
In Section~\ref{sec:a2_powers}, we prove that one of the numbers $b,c,b+2,c+2$ has to be a power of two. Finally, we finish the proof of Theorem~\ref{thm:a2} in Section~\ref{sec:a2_finish}.

\subsection{Small exceptional cases}

\begin{newlem}\label{lem:4}
The only integer triple $(2,4,c)$ with $c \geq 3$, $c\neq 4$, such that $(2,4,c)$, $(3,5,c+1)$ and $(4,6,c+2)$ are each multiplicatively dependent is $(2,4,14)$.
\end{newlem}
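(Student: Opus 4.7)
My plan is to first translate the three multiplicative-dependence hypotheses into simple constraints on the prime factorizations of $c+1$ and $c+2$. Since $(2,4)$ is already multiplicatively dependent, the first triple gives no restriction on $c$. The pair $(3,5)$ is multiplicatively independent, so for $(3,5,c+1)$ to be multiplicatively dependent every prime factor of $c+1$ must lie in $\{3,5\}$, giving $c+1 = 3^{a}5^{b}$ with $a,b\geq 0$ and $a+b\geq 1$. Analogously, $(4,6)=(2^{2},2\cdot 3)$ is multiplicatively independent, so $c+2 = 2^{x}3^{y}$ with $x,y\geq 0$ and $x+y\geq 1$.

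I would then do a case analysis on $(a,b)$. If $a=0$, so $c+1 = 5^{b}$ with $b \geq 1$, then $b=1$ gives the excluded value $c=4$, and for $b\geq 2$ I would apply Zsigmondy's theorem (Lemma~\ref{lem:Zsigmondy}) to obtain a primitive prime divisor $p$ of $5^{b}+1$. Such a $p$ does not divide $5+1=6$, so $p\geq 7$, contradicting $c+2 = 2^{x}3^{y}$. If $a\geq 1$, then $c+2 = 3^{a}5^{b}+1 \equiv 1 \pmod 3$, which forces $y=0$ and $c+2 = 2^{x}$, leaving the equation $2^{x}-3^{a}5^{b}=1$. When $b=0$, Mih\u{a}ilescu's theorem (Theorem~\ref{thm:Catalan}) applied to $2^{x}-3^{a}=1$ gives only $(x,a)=(2,1)$ and hence $c=2$, which is excluded.

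The principal case is $a,b\geq 1$. Reducing $2^{x}=1+3^{a}5^{b}$ modulo $3$ shows that $x$ must be even, so writing $x=2m$ I would factor $(2^{m}-1)(2^{m}+1) = 3^{a}5^{b}$. Since $\gcd(2^{m}-1,2^{m}+1)=1$ and both factors exceed $1$ for $m\geq 2$, one factor is a pure power of $3$ and the other a pure power of $5$. Applying Mih\u{a}ilescu's theorem to each of $2^{m}\pm 1 = 3^{a}$ (being careful to handle the boundary values $m=1$ and $a=1$ which fall outside its hypotheses) yields the short list of candidates $(m,a)\in\{(2,1)\}$ in the minus case and $(m,a)\in\{(1,1),(3,2)\}$ in the plus case; checking whether the companion value $2^{m}\mp 1$ is a positive power of $5$ leaves only $(m,a,b)=(2,1,1)$, giving $c+1=15$ and $c=14$.

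The main obstacle I expect is simply the careful bookkeeping in this last step: I need to handle the low-exponent cases excluded from the hypothesis of Mih\u{a}ilescu's theorem and to enumerate both ways the primes $3$ and $5$ can split between $2^{m}-1$ and $2^{m}+1$. At the end I would verify that $c=14$ indeed makes each of $(2,4,14)$, $(3,5,15)$, and $(4,6,16)$ multiplicatively dependent, which is immediate from $4=2^{2}$, $15=3\cdot 5$, and $16=2^{4}$.
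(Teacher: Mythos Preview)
Your proof is correct and follows the same initial reduction as the paper: both deduce $c+1 = 3^{a}5^{b}$ and $c+2 = 2^{x}3^{y}$, then split according to which exponent of $3$ vanishes. In the case $a=0$ your argument via Zsigmondy on $5^{b}+1$ is identical to the paper's. In the remaining case (the paper's $w=0$, your $y=0$), the paper takes a shorter route: since $2^{4}-1 = 15 = 3\cdot 5$, Zsigmondy applied directly to $2^{z}-1$ forces $z\le 4$ (modulo a glance at the exceptional value $z=6$), and one then simply checks the finitely many candidates $c = 2^{z}-2 \in \{-1,0,2,6,14\}$. Your route---reducing modulo $3$ to get $x$ even, factoring $2^{2m}-1 = (2^{m}-1)(2^{m}+1)$, and invoking Mih\u{a}ilescu on each factor---also works but involves more case bookkeeping (the boundary cases $m=1$, $a=1$, and the two ways to assign the primes $3$ and $5$). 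Both approaches arrive at $c=14$ as the sole survivor; the paper's is a bit more uniform in that it uses Zsigmondy in both branches, while yours trades one Zsigmondy application for an elementary factoring argument plus Catalan.
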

\begin{proof}
If $(2,4,c)$, $(3,5,c+1)$ and $(4,6,c+2)$ are each multiplicatively dependent, we have $c+1 = 3^x 5^y$ and $c+2 = 2^z 3^w$ for some nonnegative integers $x,y,z,w$. Thus, $3^x 5^y + 1 = 2^z 3^w$ and either $x=0$ or $w = 0$. 
In the first case, it follows that  $5^y + 1 = 2^z3^w$. Since $5^1 + 1 = 6$, Lemma \ref{lem:Zsigmondy} implies $y = 0$ or $y = 1$, i.e.\ $c = 5^y - 1 \in \{ 0, 4 \}$, contradicting $c \geq 3$, $c\neq 4$.

In the second case where  $w=0$, we have the equation $2^z - 1 = 3^x 5^y$. Since  $2^4 - 1 = 15$, Lemma \ref{lem:Zsigmondy} implies $z\leq 4$. But then we get $c = 2^z - 2 \in \{ -1, 0, 2, 6, 14\}$. We are not interested in $c\in \{-1,0,2\}$, the case $c=6$ does not lead to a solution, and $c=14$ gives us the triple $(2,4,14)$.
\end{proof}

\begin{newlem}\label{lem:6}
The only integer triples $(2,6,c)$ with $c \geq 3$, $c\neq 6$, such that $(2,6,c)$, $(3,7,c+1)$ and $(4,8,c+2)$ are each multiplicatively dependent\ are 
$(2,6,8)$ and $(2,6,48)$.
\end{newlem}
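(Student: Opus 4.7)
The plan is to first observe that $(4,8) = (2^2, 2^3)$ is already multiplicatively dependent, so the condition that $(4,8,c+2)$ be multiplicatively dependent is automatic and contributes nothing. The lemma therefore reduces to requiring that both $(2,6,c)$ and $(3,7,c+1)$ be multiplicatively dependent. Since the pairs $(2,6)$ and $(3,7)$ are each multiplicatively independent (in each case the two bases involve different sets of rational primes), a short prime-factorization argument forces
\[
    c = 2^a 3^b \quad\text{and}\quad c+1 = 3^x 7^y
\]
for some nonnegative integers $a,b,x,y$. The lemma thus collapses to solving the Diophantine equation
\[
    2^a 3^b + 1 = 3^x 7^y, \qquad c = 2^a 3^b \geq 3,\ c \neq 6.
\]

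The next step is to split the analysis according to whether $b \geq 1$ or $b = 0$. If $b \geq 1$, reducing modulo $3$ forces $x = 0$, so the equation becomes $2^a 3^b = 7^y - 1$. The case $y = 1$ gives $c = 6$ (excluded), the case $y = 2$ gives $7^2 - 1 = 48 = 2^4 \cdot 3$ and hence $c = 48$, and for $y \geq 3$ Zsigmondy's theorem (Lemma~\ref{lem:Zsigmondy}) supplies a primitive prime divisor of $7^y - 1$ strictly larger than $3$, contradicting $7^y - 1 = 2^a 3^b$.

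If $b = 0$, then $c = 2^a$ with $a \geq 2$, and the equation becomes $2^a + 1 = 3^x 7^y$. The key observation here is that $2$ has order $3$ modulo $7$, so $2^a \bmod 7 \in \{1,2,4\}$ and hence $2^a + 1 \bmod 7 \in \{2,3,5\}$ can never vanish; this forces $y = 0$. It then remains to solve $2^a + 1 = 3^x$, which by Mihăilescu's theorem (Theorem~\ref{thm:Catalan}) admits only $(a,x) = (1,1)$ and $(a,x) = (3,2)$, yielding $c = 2$ (excluded) and $c = 8$. There is no substantive obstacle in this plan: the three multiplicative dependence hypotheses reduce to a single exponential Diophantine equation whose solutions are pinned down directly by Zsigmondy, a one-line residue argument modulo $7$, and Mihăilescu's theorem.
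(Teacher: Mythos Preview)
Your proof is correct and follows essentially the same approach as the paper: reduce to the equation $2^a 3^b + 1 = 3^x 7^y$, split on whether $3$ divides $c$, and dispatch the two cases with a residue check modulo $7$ combined with Mih\u{a}ilescu's theorem and with Zsigmondy's theorem, respectively. Your write-up is in fact slightly more explicit than the paper's, in that you spell out why the third triple $(4,8,c+2)$ imposes no condition.
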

\begin{proof}
If $(2,6,c)$, $(3,7,c+1)$ and $(4,8,c+2)$ are each multiplicatively dependent, we have $c = 2^x 3^y$ and $c+1 = 3^z 7^w$ for some nonnegative integers $x,y,z,w$, whence  $2^x 3^y + 1 = 3^z 7^w$ and either $y=0$ or $z = 0$. 
In the first case,  $2^x + 1 = 3^z7^w$, whereby, since $2^x + 1 \equiv 2,3,5 \pmod{7}$, we have $w = 0$, and so $2^x + 1 = 3^{z}$. By Theorem \ref{thm:Catalan},  $x \in \{ 1,3 \}$ and so $c \in \{ 2,8\}$. We are not interested in $c=2$, and $c=8$ gives us our first triple.

In the second case where $z=0$, we have the equation $7^w - 1 = 2^x 3^y$. Since  $7^1 - 1 = 6$, Lemma \ref{lem:Zsigmondy} implies $w \leq 2$. Then we get $c = 7^w - 1 \in \{ 0,6,48\}$. We are not interested in $c \in\{ 0,6\}$, and $c=48$ provides our second triple.
\end{proof}

\begin{newlem}\label{lem:239}
There is no integer $c \geq 3$, $c\neq 239^2 - 1$, such that $(2,239^2 - 1,c)$, $(3,239^2,c+1)$ and $(4,239^2 + 1,c+2)$ are each multiplicatively dependent
\end{newlem}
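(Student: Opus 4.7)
The plan is to convert each of the three multiplicative dependence hypotheses into explicit constraints on the prime factorisations of $c$, $c+1$, and $c+2$, and then dispose of the resulting finitely many Diophantine subcases using Mih\u{a}ilescu's theorem and Zsigmondy's theorem. The essential numerical inputs are the factorisations
\[
	239^2 - 1 = 2^5 \cdot 3 \cdot 5 \cdot 7 \cdot 17
	\qquad \text{and} \qquad
	239^2 + 1 = 2 \cdot 13^4,
\]
the latter being precisely the exceptional case of Lemma~\ref{lem:Stormer} that singles out the value $239$ in the first place.

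First I would argue, by comparing $p$-adic valuations in any nontrivial integer relation, that multiplicative dependence of $(2, 239^2-1, c)$ forces $v_3(c) = v_5(c) = v_7(c) = v_{17}(c)$ and $v_p(c) = 0$ for every prime $p \notin \{2,3,5,7,17\}$, so that $c = 2^s \cdot (3 \cdot 5 \cdot 7 \cdot 17)^t$ for some nonnegative integers $s, t$. Similarly, $(3, 239^2, c+1)$ yields $c+1 = 3^u \cdot 239^v$ and $(4, 239^2+1, c+2)$ yields $c+2 = 2^x \cdot 13^y$ for nonnegative integers $u, v, x, y$. I would then split on $(u,v)$. If $v = 0$, then $c = 3^u - 1$, and $3 \nmid 3^u - 1$ (for $u \geq 1$) forces $t = 0$; Mih\u{a}ilescu (Theorem~\ref{thm:Catalan}) gives $c \in \{2, 8\}$, and $c = 8$ is ruled out by $c+2 = 10 \neq 2^x \cdot 13^y$. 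If $v \geq 1$ and $u = 0$, then $c = 239^v - 1$; the cases $v = 1, 2$ are disposed of directly ($v=1$ gives $c = 238$, not of the required shape, while $v = 2$ gives the excluded value $239^2-1$), and for $v \geq 3$ Zsigmondy's theorem (Lemma~\ref{lem:Zsigmondy}) supplies a primitive prime divisor of $239^v - 1$ lying outside $\{2,3,5,7,17\}$ (since each such prime already divides $239^k - 1$ for some $k \leq 2$), contradicting the shape of $c$. If both $u, v \geq 1$, then $3 \mid c+1$ forces $t = 0$ and hence $c = 2^s$ with $s$ odd and $s \geq 3$; the requirement $c+2 = 2(2^{s-1}+1) = 2^x \cdot 13^y$ then reduces to $2^{s-1}+1 = 13^y$, which a second application of Theorem~\ref{thm:Catalan} rules out.

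The only step that is not entirely mechanical is the first one: extracting the precise shape of $c$, $c+1$, $c+2$ from the definition of multiplicative dependence. In particular, one must observe that the valuations of $c$ at the primes $3, 5, 7, 17$ are forced to coincide (not merely to be bounded), because these primes each appear to the first power in $239^2 - 1$. Once this is in place, every remaining subcase collapses to a direct appeal to Mih\u{a}ilescu's or Zsigmondy's theorem as already collected in Section~\ref{sec:prelim}.
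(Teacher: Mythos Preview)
Your argument is correct, but it follows a genuinely different route from the paper's. The paper ignores the middle triple entirely: from $(2,239^2-1,c)$ and $(4,239^2+1,c+2)$ alone it writes $c = 2^x(3\cdot 5\cdot 7\cdot 17)^y$ and $c+2 = 2^z\cdot 13^w$, divides the relation $(c+2)-c=2$ by $2$, and then splits on which of $x-1,z-1$ vanishes. Each branch is then finished by a single Zsigmondy argument applied to $1785^y+1$ or to $13^w-1$ (using the explicit factorisations of $1785^3+1$ and $13^4-1$), with a short finite check of the small exponents.

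Your approach instead brings in the middle constraint $c+1 = 3^u\cdot 239^v$ and cases on $(u,v)$. This trades one equation in two unknowns for three simpler subcases, each disposed of by Mih\u{a}ilescu or Zsigmondy (on $239^v-1$ rather than on $1785^y+1$ or $13^w-1$). What you gain is that the individual cases are more transparent and require no ad hoc numerical factorisations beyond $239^2\pm 1$; what you lose is a little economy, since the paper never needs to analyse $c+1$ at all. One small point: in your final subcase the appeal to Theorem~\ref{thm:Catalan} for $2^{s-1}+1 = 13^y$ only covers $y\geq 2$; you should note explicitly that $y\in\{0,1\}$ gives $2^{s-1}\in\{0,12\}$, which is impossible.
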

\begin{proof}
Assume that $(2,239^2 - 1,c) = (2,2^5 \cdot 3 \cdot 5 \cdot 7 \cdot 17,c)$ and $(4,239^2 + 1,c+2) = (4,2 \cdot 13^4,c+2)$ are each multiplicatively dependent. Then we have $c = 2^x \cdot (3 \cdot 5 \cdot 7 \cdot 17)^y$ and $c+2 =  2^z \cdot 13^w$. Considering $c/2 + 1 = (c+2)/2$, we get $2^{x-1} \cdot (3 \cdot 5 \cdot 7 \cdot 17)^y + 1 = 2^{z-1} \cdot 13^w$ and $x-1 = 0$ or $z-1 = 0$. 

In the first case, we are led to the equation $(3 \cdot 5 \cdot 7 \cdot 17)^y + 1 = 2^{z-1} \cdot 13^w$. Since $(3 \cdot 5 \cdot 7 \cdot 17)^3 + 1 = 2 \cdot 13 \cdot 19 \cdot 47 \cdot 244957$, 
Lemma \ref{lem:Zsigmondy} implies $y\leq 3$, but neither of $y = 1,2,3$ leads to a solution of the equation, and we are not interested in $y = 0$ because then $c = 2$.

In the second case where $z=1$, we find that $2^{x-1} \cdot (3 \cdot 5 \cdot 7 \cdot 17)^y  = 13^w - 1$. Since $13^4 - 1 = 2^4 \cdot 3 \cdot 5 \cdot 7 \cdot 17$, Lemma \ref{lem:Zsigmondy} implies $w\leq 4$, but none of $w = 1,2, 3,4$ lead to a solution of the equation, and we are not interested in $w = 0$ because then $c+2 = 2$.

\end{proof}

\subsection{Notation and parity}\label{sec:a2_notationParity}

Assume that $(2,b,c)$, $(3,b+1,c+1)$ and $(4,b+2,c+2)$ are each multiplicatively dependent, and that $b,c$ are distinct and larger than $2$ (we do not assume $b<c$ at this point).

Then we can write
\begin{align}
(2,b,c) &= (2, 2^{y_0} q_0^{\beta_0}, 2^{z_0} q_0^{\gamma_0}),\nonumber\\
(3,b+1,c+1) &= (3, 3^{y_1} q_1^{\beta_1}, 3^{z_1} q_1^{\gamma_1}),\label{eq:a2_systemgeneral}\\ 
(4,b+2,c+2) &= (2^2, 2^{y_2} q_2^{\beta_2}, 2^{z_2} q_2^{\gamma_2}),\nonumber 
\end{align}
with integers $q_i \geq 1$ and $y_i, z_i, \beta_i, \gamma_i \geq 0$ ($i = 0,1,2$). We may assume that $q_0$ and $q_2$ are not divisible by $2$, and that $q_1$ is not divisible by $3$. 
Moreover, we may assume that $q_0,q_1,q_2>1$ (though any of the exponents might be 0).

First, we find all three consecutive multiplicatively dependent\ triples where one of $b,c$ is odd (say, $c$ is odd).

\begin{newlem}\label{lem:a2_bodd}
Let $b,c$ be distinct integers larger than $2$ such that $(2,b,c)$, $(3,b+1,c+1)$ and $(4,b+2,c+2)$ are each multiplicatively dependent. If $c$ is odd, then we have
\begin{equation}\label{eq:fam_bodd}
(2,b,c) = (2, 8,  5^x - 2)
\qquad \text{for some positive integer } x.
\end{equation} 
\end{newlem}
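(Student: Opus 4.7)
The plan is to carry out a case analysis on the parity of $b$, exploiting the decomposition \eqref{eq:a2_systemgeneral}. Since $c$ is odd we immediately get $z_0 = z_2 = 0$, so $c = q_0^{\gamma_0}$ and $c+2 = q_2^{\gamma_2}$ with $q_0, q_2$ odd.

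Suppose first that $b$ is odd. Then $b, c, b+2, c+2$ are all odd while $2$ and $4$ are pure powers of $2$, which is coprime to each of these. In any multiplicative relation $2^{a} b^{d} c^{e} = 1$ the $2$-adic exponent must vanish, so $a = 0$ and $(b, c)$ is multiplicatively dependent as a pair; applied to $(4, b+2, c+2)$ the same reasoning yields that $(b+2, c+2)$ is multiplicatively dependent. Since $b \neq c$ are distinct integers larger than $2$, this contradicts Lemma~\ref{lem:consecutivePairs2}. Hence $b$ must be even.

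Now assume $b$ is even, so $b+1$ is odd and $c+1$ is even. In the decomposition $(3, b+1, c+1) = (3, 3^{y_1} q_1^{\beta_1}, 3^{z_1} q_1^{\gamma_1})$ the shared base $q_1$ is coprime to $3$, and would have to divide both the odd number $b+1$ and the even number $c+1$ simultaneously; the only way to avoid this contradiction is $\beta_1 = 0$, i.e.\ $b+1 = 3^{y_1}$ for some $y_1 \geq 2$. If moreover $b$ is a pure power of $2$, then $2^{y_0} + 1 = 3^{y_1}$, and Theorem~\ref{thm:Catalan} (together with the constraint $b > 2$) gives $b = 8$. In that case the third triple becomes $(4, 10, c+2)$; since $10 = 2 \cdot 5$ and $c+2$ is odd, the multiplicative dependence forces $c+2$ to be a pure power of $5$, yielding the claimed family $(2, b, c) = (2, 8, 5^x - 2)$.

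It remains to rule out the subcase in which $b = 2^{y_0} q_0^{\beta_0}$ with $q_0 > 1$ odd and $\beta_0 \geq 1$; here necessarily $y_1 \geq 3$. From the first triple $c = q_0^{\gamma_0}$, and analogously from the third triple (having excluded $b+2 = 2^{y_2}$, which by Theorem~\ref{thm:Catalan} forces $b = 2$) we obtain $b+2 = 2^{y_2} q_2^{\beta_2}$ and $c+2 = q_2^{\gamma_2}$ with odd $q_2 > 1$. This reduces to the Diophantine equation
\[
q_2^{\gamma_2} - q_0^{\gamma_0} = 2,
\]
where $q_0, q_2$ are determined by $y_1$ as the odd-part bases of $3^{y_1} \mp 1$ respectively. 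To reach a contradiction I would first compute $y_0, y_2$ exactly in terms of the parity of $y_1$ via Lemmata~\ref{lem:v2_powers-1} and~\ref{lem:v2_oddexpo}, and then pick any odd prime $p \mid q_0$. Using $3^{y_1} \equiv 1 \pmod p$ and the factorizations of $3^{y_1} \pm 1$, one derives $q_2^{\beta_2} \equiv 2^{1-y_2} \pmod p$ and $q_2^{\gamma_2} \equiv 2 \pmod p$, whose combination forces $p$ to divide $2^{k} - 1$ for a small explicit exponent $k$ depending on $\beta_2, \gamma_2, y_2$. Coupling this with $p \mid 3^{y_1} - 1$ and the parity of $y_1$ severely restricts $p$; the sporadic case $\gamma_0 = 2$ is disposed of by Nagell's Theorem~\ref{thm:Nagell} (which forces $q_2 = 3$, contradicting $\gcd(q_2, 3) = 1$ from $q_2 \mid 3^{y_1}+1$), and Bennett's Theorem~\ref{thm:bennett2001-t2} guarantees uniqueness of any would-be $(\gamma_0, \gamma_2)$, reducing the remaining work to a finite check.

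The main obstacle is this last subcase: the modular and order-theoretic arguments must be carried out carefully for both parities of $y_1$, tracking the interaction between the order of $2$ modulo primes dividing $q_0$ and the divisibility $q_0 \mid 3^{y_1} - 1$. The other cases are comparatively immediate, relying only on Catalan's theorem and Lemma~\ref{lem:consecutivePairs2}.
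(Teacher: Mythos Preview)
Your treatment of the both-odd case via Lemma~\ref{lem:consecutivePairs2}, the deduction that $b+1=3^{y_1}$, and the subcase where $b$ is a pure power of~$2$ are all essentially as in the paper. (One small imprecision: an odd $q_1>1$ dividing both an odd and an even number is not in itself a contradiction; the actual point is that $c+1$ being even forces $q_1$ itself to be even, which then forces $\beta_1=0$.)

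The genuine gap is in the final subcase, where your plan attacks the relation $q_2^{\gamma_2}-q_0^{\gamma_0}=2$ coming from $c$ and $c+2$. The congruences you write down are correct, but combining them only yields $2^{\beta_2+\gamma_2(y_2-1)}\equiv 1\pmod{p}$, and this exponent is not ``small'': $\beta_2$ and $\gamma_2$ are unbounded a priori, so no finite list of admissible primes $p$ emerges. Theorem~\ref{thm:bennett2001-t2} does not reduce to a finite check either, since $q_0,q_2$ themselves depend on $y_1$, which ranges over infinitely many values; and Nagell's theorem handles $\gamma_0=2$ only when $\gamma_2\ge 2$ (for $\gamma_2=1$ the equation $q_2=q_0^2+2$ is unobstructed). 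The paper instead exploits the relation between $b+1$ and its neighbours $b,\,b+2$: since exactly one of $y_0,y_2$ equals~$1$, one obtains either $3^{y_1}-2q_0^{\beta_0}=1$ (when $y_0=1$) or $3^{y_1}-2q_2^{\beta_2}=-1$ (when $y_2=1$). After short congruence arguments modulo $q_2$ (resp.\ $q_0$) dispose of $\beta_0=1$ (resp.\ $\beta_2=1$), both equations with exponent $\ge 2$ fall to Lemma~\ref{lem:powerOf3minus2perfectpowers}, leaving only the sporadic $(y_1,q_0,\beta_0)=(5,11,2)$, which is then killed modulo~$10$. That lemma is the key tool your sketch is missing.
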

\begin{proof}
Assume for a moment that $b$ and $c$ are both odd. Then, with our previous  notation, we have $(b,c) = (q_0^{\beta_0}, q_0^{\gamma_0})$ and $(b+2, c+2) = (q_1^{\beta_1}, q_1^{\gamma_1})$. This is impossible by Lemma~\ref{lem:consecutivePairs2}.

Now assume that $c$ is odd and $b$ is even (in our previous notation $z_0 = z_2 = 0$ and $y_0,y_2 \geq 1$). 
Then in the triple $(3,b+1,c+1)$ the number $c+1$ is the only even number, so we must have that $(3,b+1,c+1)$ is $2$-multiplicatively dependent\ with $b+1 = 3^{y_1}$. Since we are assuming $b+1>3$, we have that $y_1 \geq 2$. 
Since $b$ and $b+2$ are even, we have either $y_0 = 1$ or $y_2 = 1$. We distinguish between these two cases and use the notation from \eqref{eq:a2_systemgeneral}.

\caselIold{Case a:} $y_0 = 1$. Recall the situation:
\begin{align*}
(2,b,c) &= (2, 2\cdot q_0^{\beta_0}, q_0^{\gamma_0}),\\
(3,b+1,c+1) &= (3, 3^{y_1}, c+1),\\ 
(4,b+2,c+2) &= (2^2, 2^{y_2} q_2^{\beta_2}, q_2^{\gamma_2}),
\end{align*}
where we clearly have $\beta_0, \gamma_0, \gamma_2 \geq 1$. Moreover, note that $\beta_2\geq 1$ because otherwise we have $(b + 1) + 1 = 3^{y_1} + 1 = 2^{y_2}$, which is impossible by Theorem \ref{thm:Catalan}. Further, we may assume that $q_0$ and $q_2$ are not divisible by $2$, and we see from $2 q_0^{\beta_0} + 1 = 2^{y_2}q_2^{\beta_2}-1 = 3^{y_1}$ that they cannot be divisible by 3. Thus, $q_0,q_2\geq 5$.

Assume for a moment that $\beta_0 = 1$. Then 
$$
2 \cdot q_0 = b = (b+2)-2 = 2^{y_2} q_2^{\beta_2} - 2 \equiv - 2 \pmod{q_2}, 
$$
which implies $q_0 \equiv - 1 \pmod{q_2}$. But then $c = q_0^{\gamma_0} \equiv \pm 1 \pmod{q_2}$, which is a contradiction because $c = (c+2)- 2 = q_2^{\gamma_2} - 2 \equiv -2 \pmod{q_2}$.

Therefore, we have $\beta_0 \geq 2$. Now consider 
\[
	(b+1) - b 
	= 3^{y_1} - 2 \cdot q_0^{\beta_0}
	= 1.
\]
From Lemma~\ref{lem:powerOf3minus2perfectpowers},  $y_1 =5$, $q_0 = 11$ and $\beta_0 =2$.
Thus, $b + 2 = 3^5 + 1 = 4 \cdot 61$, and so $q_2 = 61$. But then
\[
	(c+2) - c 
	= 61^{\gamma_2} - 11^{\gamma_0}
	= 2
\]
is a contradiction modulo 10.

\caselIold{Case b:} $y_2 = 1$. 
Recall the situation:
\begin{align*}
(2,b,c) &= (2, 2^{y_0} q_0^{\beta_0}, q_0^{\gamma_0}),\\
(3,b+1,c+1) &= (3, 3^{y_1},  c+1),\\ 
(4,b+2,c+2) &= (2^2, 2 \cdot q_2^{\beta_2}, q_2^{\gamma_2}),
\end{align*}
where $\gamma_0, \beta_2, \gamma_2 \geq 1$, and $q_0$ and $q_2$ are not divisible by $2$ or $3$, whence $q_0,q_2\geq 5$.

Assume for the moment that $\beta_0 = 0$. Then we have $b  + 1 = 2^{y_0} + 1 = 3^{y_1}$, 
which by Theorem \ref{thm:Catalan} implies $y_0 = 1$ or $y_0 = 3$, i.e.\ $b = 2$ or $b = 8$. We are not interested in $b=2$. If $b=8$, then $(4,b+2,c+2) = (4, 10, c+2)$, so $c+2 = 5^{\gamma_2}$ and we are in the family~\eqref{eq:fam_bodd}. Therefore, we may assume $\beta_0\geq 1$.

Next, assume  that $\beta_2 = 1$. Then $2 \cdot q_2 = b+2 = 2^{y_0} q_0^{\beta_0} + 2 \equiv 2 \pmod{q_0}$, which implies $q_2 \equiv 1 \pmod{q_0}$. But then $c + 2 = q_2^{\gamma_2} \equiv 1 \pmod{q_0}$, which is a contradiction because $c +2 = q_0^{\gamma_0} + 2 \equiv 2 \pmod{q_0}$.
Therefore, we have $\beta_2 \geq 2$. 

Now consider 
\[
	(b+1) - (b+2) 
	= 3^{y_1} - 2 \cdot q_2^{\beta_2}
	= -1,
\]
and as in the previous case  apply Lemma~\ref{lem:powerOf3minus2perfectpowers}; no new  
solutions accrue.
\end{proof}

In view of Lemma~\ref{lem:a2_bodd}, since triples of the shape $(2,8,5^x - 2)$ are part of family \eqref{eq:fam1}, we may from now on assume that $b$ and $c$ are even, i.e.\ in the notation of \eqref{eq:a2_systemgeneral} we have $y_0, y_2, z_0, z_2\geq 1$. 

\subsection{Proving that one of $b,c,b+2,c+2$ is a power of two}
\label{sec:a2_powers}

We first consider the cases $y_1 = 0$ or $z_1 = 0$.

\begin{newlem}\label{lem:a2_mod3}
Let $b,c$ be distinct integers larger than $2$ such that $(2,b,c)$, $(3,b+1,c+1)$ and $(4,b+2,c+2)$ are each multiplicatively dependent. If at least one of $b+1$, $c+1$ is not divisible by 3, then one of the numbers $b,c, b+2, c+2$ is a power of 2.
\end{newlem}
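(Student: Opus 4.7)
The plan is a proof by contradiction. Suppose that none of $b, c, b+2, c+2$ is a power of $2$; in the notation of \eqref{eq:a2_systemgeneral} this means $\beta_0, \gamma_0, \beta_2, \gamma_2 \geq 1$. By symmetry of the hypothesis (interchange the roles of $b$ and $c$, i.e.\ of the $\beta$-exponents and the $\gamma$-exponents), I may assume that $3 \nmid b+1$, so $y_1 = 0$ and $b+1 = q_1^{\beta_1}$ with $q_1 \geq 3$ odd and $\beta_1 \geq 1$.

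The first step is to deduce that $3 \nmid c+1$ as well. Since $b \not\equiv 2 \pmod 3$, either $b \equiv 0$ or $b \equiv 1 \pmod 3$. In the first case, $3 \mid b = 2^{y_0} q_0^{\beta_0}$ forces $3 \mid q_0$ (as $q_0$ is odd and $\beta_0 \geq 1$), and then $\gamma_0 \geq 1$ gives $3 \mid c$, so $3 \nmid c+1$. The second case ($b \equiv 1 \pmod 3$, hence $3 \mid b+2$) is analogous, routing through $c+2$ using $\beta_2, \gamma_2 \geq 1$. Consequently $z_1 = 0$, and the parametrization in \eqref{eq:a2_systemgeneral} sharpens to $b+1 = q_1^{\beta_1}$ and $c+1 = q_1^{\gamma_1}$ with the same odd $q_1 \geq 3$, and $1 \leq \beta_1 < \gamma_1$; in particular $\gamma_1 \geq 2$.

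The second step is a Zsigmondy-type argument. Multiplying the pairs of relations
\[
q_1^{\beta_1} - 1 = b = 2^{y_0} q_0^{\beta_0}, \quad q_1^{\beta_1} + 1 = b+2 = 2^{y_2} q_2^{\beta_2},
\]
(and the analogues for $\gamma_1$) yields
\[
q_1^{2\beta_1} - 1 = 2^{y_0+y_2} q_0^{\beta_0} q_2^{\beta_2}, \qquad q_1^{2\gamma_1} - 1 = 2^{z_0+z_2} q_0^{\gamma_0} q_2^{\gamma_2}.
\]
Hence the odd parts of $q_1^{2\beta_1}-1$ and $q_1^{2\gamma_1}-1$ are supported on exactly the same set of primes, namely those dividing $q_0 q_2$. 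Applying Lemma~\ref{lem:Zsigmondy} with $a=q_1$, $b=1$ and $n = 2\gamma_1$: since $q_1 \geq 3$ is odd and $2\gamma_1 \geq 4$, none of the listed exceptions ($n=1$, $n=2$, or $n=6$ with $q_1=2$) can apply, so $q_1^{2\gamma_1}-1$ admits a primitive prime divisor $p$. Because $q_1$ is odd we have $2 \mid q_1 - 1$, so $p$ is odd; therefore $p \mid q_0^{\gamma_0} q_2^{\gamma_2}$, whence $p \mid q_0 q_2$. But $\beta_0, \beta_2 \geq 1$ implies $q_0 q_2 \mid q_1^{2\beta_1}-1$, so $p \mid q_1^{2\beta_1}-1$, contradicting the primitivity of $p$ since $2\beta_1 < 2\gamma_1$.

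The only moderately delicate step is the mod-$3$ reduction in the first paragraph: it is the place that simultaneously exploits all four assumptions $\beta_0, \gamma_0, \beta_2, \gamma_2 \geq 1$ together with the fact that the multiplicative dependence of $(2,b,c)$ and $(4,b+2,c+2)$ forces $b,c$ to share the base $q_0$ and $b+2,c+2$ to share $q_2$. Once this is in hand, the Zsigmondy step provides a clean finish.
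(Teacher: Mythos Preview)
Your argument is correct and uses the same two ingredients as the paper: a mod-$3$ reduction (your first step is exactly the contrapositive of the paper's Case~b) followed by a Zsigmondy argument (your second step parallels the paper's Case~a, though you apply Zsigmondy to $q_1^{2n}-1$ rather than to $q_1^{n}+1$).

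One point needs a citation. You assert that $q_1\geq 3$ is odd, i.e.\ that $b+1$ (and later $c+1$) is odd; this is not automatic from the hypotheses of the lemma. The paper secures it by invoking Lemma~\ref{lem:a2_bodd} at the outset: if either $b$ or $c$ were odd, that lemma forces $8\in\{b,c\}$, a power of~$2$, contradicting your standing assumption $\beta_0,\gamma_0,\beta_2,\gamma_2\geq 1$. Without this step the Zsigmondy exception $(q_1,2\gamma_1)=(2,6)$ is not ruled out on its face, and your justification that $p$ is odd (``$2\mid q_1-1$'') also uses $q_1$ odd. Add a one-line appeal to Lemma~\ref{lem:a2_bodd} and the proof is complete.
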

\begin{proof}
We use the notation from \eqref{eq:a2_systemgeneral}:
\begin{align}
(2,b,c) &= (2, 2^{y_0} q_0^{\beta_0}, 2^{z_0} q_0^{\gamma_0}),\nonumber\\
(3,b+1,c+1) &= (3, 3^{y_1} q_1^{\beta_1}, 3^{z_1} q_1^{\gamma_1}),\nonumber\\ 
(4,b+2,c+2) &= (2^2, 2^{y_2} q_2^{\beta_2}, 2^{z_2} q_2^{\gamma_2}).\nonumber 
\end{align}
In view of Lemma~\ref{lem:a2_bodd} we may assume that $y_0, z_0, y_2, z_2 \geq 1$.

We assume that $y_1 = 0$ or $z_1 = 0$, and we need to show that one of $\beta_0, \gamma_0, \beta_2, \gamma_2$ is zero.
We distinguish between two cases according to whether one or both of $y_1, z_1$ are zero.

\caselIold{Case a:} $y_1 = z_1 = 0$. 
We consider $(b+1) +1 = b+2$ and $(c+1) + 1 = c+2$:
\begin{align*}
q_1^{\beta_1} + 1 &= 2^{y_2} q_2^{\beta_2},\\
q_1^{\gamma_1} + 1 &= 2^{z_2} q_2^{\gamma_2}.
\end{align*}
Since $b+1 \neq c+1$, we have $\beta_1 \neq \gamma_1$ and so by Lemma \ref{lem:Zsigmondy} (note that $q_1 \neq 2$) $q_1^{\beta_1} + 1$ and $q_1^{\gamma_1} + 1$ have distinct sets of prime factors. Since $y_2, z_2 \geq 1$, we have either $\beta_2 = 0$ or $\gamma_2 = 0$.

\caselIold{Case b:} One of $y_1, z_1$ is $\geq 1$. Assume,
without loss of generality, $y_1 = 0$ and $z_1 \geq 1$. Since one of the numbers $b, b+1, b+2$ has to be divisible by $3$, we have either $3\mid b$ or $3\mid b+2$, i.e.\ either $3\mid q_0$ or $3\mid q_2$. Since $3$ divides $c+1$, we have $3\nmid c, c+2$. Thus, if $3 \mid q_0$, we must have $\gamma_0 = 0$, and if $3\mid q_2$, we must have $\gamma_2 = 0$. 
\end{proof}

Now we deal with the general case.

\begin{newlem}\label{lem:a2_powers}
Let $b,c$ be distinct integers larger than $2$ such that $(2,b,c)$, $(3,b+1,c+1)$ and $(4,b+2,c+2)$ are each multiplicatively dependent. Then one of the numbers $b,c, b+2, c+2$ is a power of 2.
\end{newlem}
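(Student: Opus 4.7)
I would first combine the two reductions already available. By Lemma \ref{lem:a2_bodd}, we may assume both $b$ and $c$ are even; by Lemma \ref{lem:a2_mod3}, we may further assume that both $b+1$ and $c+1$ are divisible by $3$, since otherwise the conclusion is immediate. In the notation of \eqref{eq:a2_systemgeneral} this gives $y_0, y_1, y_2, z_0, z_1, z_2 \geq 1$. I then argue by contradiction, assuming that none of $b, c, b+2, c+2$ is a power of $2$, i.e.\ $\beta_0, \gamma_0, \beta_2, \gamma_2 \geq 1$.

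Next I would extract elementary parity and coprimality constraints. Since $b$ and $b+2$ are both even and $\gcd(b, b+2) = 2$, and since $q_0, q_2$ are odd, it follows that $\min(y_0, y_2) = 1$ and $\gcd(q_0, q_2) = 1$; similarly $\min(z_0, z_2) = 1$. The case $y_0 = y_2 = 1$ would yield $q_2^{\beta_2} - q_0^{\beta_0} = 1$ with both sides odd, which is impossible, so in fact $\max(y_0, y_2) \geq 2$ and, analogously, $\max(z_0, z_2) \geq 2$. By the symmetry $b \leftrightarrow c$, I may reduce to WLOG $y_0 = 1$ and $y_2 \geq 2$, then split further according to whether $z_0 = 1$ or $z_2 = 1$ (a handful of sub-cases).

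In each sub-case, I substitute the factorizations $b = 2 q_0^{\beta_0}$, $b+1 = 3^{y_1} q_1^{\beta_1}$, $b+2 = 2^{y_2} q_2^{\beta_2}$ into the identity $(b+2) - (b+1) = 1$, and similarly for $c$, to obtain a pair of equations
\[
  2^{y_2} q_2^{\beta_2} - 3^{y_1} q_1^{\beta_1} = 1, \qquad 2^{z_2} q_2^{\gamma_2} - 3^{z_1} q_1^{\gamma_1} = 1
\]
(or the ``$+1$'' analogues, depending on which of the $y$'s and $z$'s is $1$). The key step — and the main obstacle — is to exploit the fact that both pairs share the same bases $q_1, q_2$, together with the simultaneous factorization of $b$ and $c$ through $q_0$, to force $q_2$ and $q_1$ into the Mersenne/Fermat-type shape $q_2 = 2^x \pm 1$, $q_1 = 2^{x+1} \pm 1$ for some positive integer $x$. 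This identification recasts the pair of equations above precisely as an instance of \eqref{eq:prop_sit1} or \eqref{eq:prop_sit2}. I anticipate needing a careful sub-case split on the sizes of $\beta_0, \gamma_0, \beta_2, \gamma_2$ together with applications of Zsigmondy's theorem (Lemma \ref{lem:Zsigmondy}) and Mih\u{a}ilescu's theorem (Theorem \ref{thm:Catalan}) to rule out non-generic shapes.

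Finally, Propositions \ref{prop:sit1} and \ref{prop:sit2} supply an explicit short list of solutions. Inspecting them one by one: each listed solution has at least one of its exponents equal to $0$, which translates either to one of $\beta_0, \gamma_0, \beta_2, \gamma_2$ being zero (contradicting our standing assumption) or to $y_1 = 0$ or $z_1 = 0$ (contradicting the first reduction). A few remaining exceptional solutions correspond to very specific small $b$ — namely $b = 4$, $b = 6$, or $b = 239^2 - 1$ — all of which are completely classified by Lemmas \ref{lem:4}, \ref{lem:6}, and \ref{lem:239}, and none of those classifications produces a counterexample to the present lemma. This yields the desired contradiction.
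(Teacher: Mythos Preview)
Your plan has two genuine gaps. First, the symmetry $b \leftrightarrow c$ only swaps the $y$-exponents with the $z$-exponents; it does \emph{not} swap $y_0$ with $y_2$. So your WLOG reduction to $y_0 = 1$, $y_2 \geq 2$ is illegitimate: the configuration $y_0, z_0 \geq 2$, $y_2 = z_2 = 1$ (the paper's Case~2) is fixed under $b \leftrightarrow c$ and is never reached by your split.

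Second, and more seriously, the ``key step'' you flag as the main obstacle is in fact circular. Propositions~\ref{prop:sit1} and~\ref{prop:sit2} concern equations whose bases are already $2^x \pm 1$ and $2^{x+1} \pm 1$; in the paper those shapes arise in Section~\ref{sec:a2_finish} \emph{only after} Lemma~\ref{lem:a2_powers} has supplied a genuine power of~$2$ among $b, c, b+2, c+2$ (e.g.\ $b = 2^{y_0}$ forces $b+2 = 2(2^{y_0-1}+1)$, so $q_2 = 2^{y_0-1}+1$). Without that input, there is no mechanism --- and you offer none beyond a gesture at Zsigmondy and Catalan --- that forces arbitrary odd $q_1, q_2 > 1$ into Mersenne/Fermat form. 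The paper proves the lemma by a completely different route that avoids Propositions~\ref{prop:sit1}--\ref{prop:sit2} entirely: in the ``aligned'' cases ($y_0 = z_0 = 1$ or $y_2 = z_2 = 1$) a direct Zsigmondy argument on $q_0^{\beta_0}+1$, $q_0^{\gamma_0}+1$ (resp.\ $q_2^{\beta_2}-1$, $q_2^{\gamma_2}-1$) forces one exponent to vanish; the ``crossed'' case ($y_0 = z_2 = 1$, $y_2, z_0 \geq 2$) is harder and is handled via congruences mod~$8$, Lemma~\ref{lem:Stormer}, Lemma~\ref{lem:two_squares_minus_one}, Lemma~\ref{lem:239}, and Catalan, ultimately pinning down $q_0$ or $q_2$ explicitly and reading off $\beta_0 = 1$ or $\gamma_0 = 0$ or $\gamma_2 = 1$. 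Lemmata~\ref{lem:4} and~\ref{lem:6} play no role here; they, together with Propositions~\ref{prop:sit1}--\ref{prop:sit2}, enter only in the subsequent Section~\ref{sec:a2_finish}.
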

\begin{proof}
We use the notation from \eqref{eq:a2_systemgeneral}:
\begin{align}
(2,b,c) &= (2, 2^{y_0} q_0^{\beta_0}, 2^{z_0} q_0^{\gamma_0}),\nonumber\\
(3,b+1,c+1) &= (3, 3^{y_1} q_1^{\beta_1}, 3^{z_1} q_1^{\gamma_1}),\nonumber\\ 
(4,b+2,c+2) &= (2^2, 2^{y_2} q_2^{\beta_2}, 2^{z_2} q_2^{\gamma_2}).\nonumber 
\end{align}
By Lemmata~\ref{lem:a2_bodd} and \ref{lem:a2_mod3}, we may assume that $y_0, z_0, y_1, z_1, y_2, z_2 \geq 1$.

Since exactly one of the numbers $b, b+2$ and exactly one of the numbers $c, c+2$ is $\equiv 0 \pmod 4$, we distinguish between the following three cases: ($y_0 = z_0 = 1 $ and $y_2,z_2 \geq 2$) or ($y_0,z_0 \geq 2$ and $y_2 = z_2 = 1 $) or ($y_0 = z_2 = 1$ and $y_2, z_0 \geq 2$). Note that we also have the case ($y_2 = z_0 = 1$ and $y_0, z_2 \geq 2$), but this is equivalent to the third case as we are not assuming $b<c$. We divide the proof of Lemma~\ref{lem:a2_powers}  into these three cases.

\caselI{Case 1:} $y_0 = z_0 = 1 $ and $y_2,z_2 \geq 2$. Recall that we then have 
\begin{align}
(2,b,c) &= (2, 2 \cdot q_0^{\beta_0}, 2 \cdot q_0^{\gamma_0}),\nonumber\\
(3,b+1,c+1) &= (3, 3^{y_1} q_1^{\beta_1}, 3^{z_1} q_1^{\gamma_1}),\nonumber\\ 
(4,b+2,c+2) &= (2^2, 2^{y_2} q_2^{\beta_2}, 2^{z_2} q_2^{\gamma_2}).\nonumber 
\end{align}
We consider the equations $(b) + 2 = (b+2)$ and $(c) +2 = (c+2)$ divided by 2:
\begin{align}
	q_0^{\beta_0} + 1 &= 2^{y_2 - 1}q_2^{\beta_2}, \nonumber\\
	q_0^{\gamma_0} + 1 &= 2^{z_2 - 1}q_2^{\gamma_2}. \nonumber
\end{align}
Since $b$ and $c$ are distinct, also $\beta_0$ and $\gamma_0$ are distinct. Moreover, $q_0 \neq 2$. Thus, by Lemma \ref{lem:Zsigmondy}, since $y_2 - 1, z_2 - 1 \geq 1$, we may conclude that either $\beta_2 = 0$ or $\gamma_2 = 0$.

\caselI{Case 2:} $y_0,z_0 \geq 2$ and $y_2 = z_2 = 1 $. Recall that we then have 
\begin{align}
(2,b,c) &= (2, 2^{y_0} \cdot q_0^{\beta_0}, 2^{z_0} \cdot q_0^{\gamma_0}),\nonumber\\
(3,b+1,c+1) &= (3, 3^{y_1} q_1^{\beta_1}, 3^{z_1} q_1^{\gamma_1}),\nonumber\\ 
(4,b+2,c+2) &= (2^2, 2\cdot q_2^{\beta_2}, 2\cdot q_2^{\gamma_2}).\nonumber 
\end{align}
We consider the equations $(b+2) - 2 = b$ and $(c+2) - 2 = c$ divided by 2:
\begin{align}
	q_2^{\beta_2} - 1 &= 2^{y_0 - 1} q_0^{\beta_0}, \\
	q_2^{\gamma_2} - 1 &= 2^{z_0 - 1}q_2^{\gamma_0}. \label{eq:beforeZsigmondy}
\end{align}
Since $q_2$ is not equal to two, we conclude from Lemma \ref{lem:Zsigmondy} that either $q_2 + 1$ is a power of 2 and $\{\beta_2,\gamma_2\} = \{1,2\}$, or, since $y_0-1, z_0-1 \geq 1$, that either $\beta_0 = 0$ or $\gamma_0 = 0$. Now assume that $q_2 +1$ is a power of 2, i.e. $q_2 = 2^n - 1$ for some $n\geq 2$, and, without loss of generality, $\beta_2 = 1$, $\gamma_2 = 2$. Then $c+2 = 2 \cdot (2^n - 1)^2$. But then $c+2\equiv 0,-1 \pmod{3}$, and both cases are a contradiction to $c+1 \equiv 0 \pmod{3}$. 

\caselI{Case 3:} $y_0 = z_2 = 1$ and $y_2, z_0 \geq 2$. 
Recall that we then have 
\begin{align}
(2,b,c) &= (2, 2 \cdot q_0^{\beta_0}, 2^{z_0} q_0^{\gamma_0}),\nonumber\\
(3,b+1,c+1) &= (3, 3^{y_1} q_1^{\beta_1}, 3^{z_1} q_1^{\gamma_1}),\nonumber\\ 
(4,b+2,c+2) &= (2^2, 2^{y_2} q_2^{\beta_2}, 2 \cdot q_2^{\gamma_2}).\nonumber 
\end{align}

We distinguish between three subcases: First ($y_2 \geq 3$ and  $z_0\geq 3$), then ($y_2 =2$ and $z_0 \geq 2$), and finally ($z_0 =2$ and $q_2 \geq 3$).

\caselII{Case 3.1:} $y_2 \geq 3$ and  $z_0\geq 3$.
In this case we have that $b+2 \equiv 0 \pmod{8}$ and $c \equiv 0 \pmod{8}$, and so
\begin{align}
	b + 1 = 3^{y_1} q_1^{\beta_1} &\equiv -1 \pmod{8},\label{eq:a2_case3_b1mod}\\
	c + 1 = 3^{z_1} q_1^{\gamma_1}&\equiv 1 \pmod{8}.\label{eq:a2_case3_c1mod}
\end{align}
Since $3^{y_1} \not\equiv -1 \pmod{8}$ for any $y_1$, we have that $q_1^{\beta_1} \not\equiv 1 \pmod{8}$, and so $\beta_1$ must be odd. Multiplying the congruences \eqref{eq:a2_case3_b1mod} and \eqref{eq:a2_case3_c1mod}, 
\[
	3^{y_1 + z_1} q_1^{\beta_1+\gamma_1} \equiv -1 \pmod{8},
\]
and by the same argument as before, $\beta_1 + \gamma_1$ must be odd, and so $\gamma_1$ is even. 
Then we get from \eqref{eq:a2_case3_c1mod} that $3^{z_1} \equiv 1 \pmod{8}$, and so $z_1$ is even as well. Thus, $c+1=3^{z_1}q^{\gamma_1}=:x^2$ is a square.

Now let us consider the equation $(c+1) +1 = c+2$:
\[
	x^2 + 1 = 2 \cdot q_2^{\gamma_2}.
\]
Then we obtain from Lemma~\ref{lem:Stormer} that either $\gamma_2 \leq 2$ or $(x,q_2, \gamma_2) =(239, 13,4)$.

If $(x,q_2, \gamma_2) =(239, 13,4)$,
then $c + 1 =  x^2 = 239^2$, which, by Lemma~\ref{lem:239}, does not lead to a solution.
We may therefore suppose that  $\gamma_2 \leq 2$, whereby, from $2 \cdot q_2^{\gamma_2} = c+2 \equiv 1 \pmod{3}$, necessarily $\gamma_2 = 1$.

We summarize situation:
\begin{align}
(2,b,c) &= (2, 2 \cdot q_0^{\beta_0}, 2^{z_0} q_0^{\gamma_0}),\nonumber\\
(3,b+1,c+1) &= (3, 3^{y_1} q_1^{\beta_1}, 3^{z_1} q_1^{\gamma_1} =x^2),\nonumber\\ 
(4,b+2,c+2) &= (2^2, 2^{y_2} q_2^{\beta_2}, 2 \cdot q_2),\nonumber 
\end{align}
with $y_2,z_0 \geq 3$.

We have $c+2 = 2 \cdot q_2 = 2^{z_0} q_0^{\gamma_0} +2$, which, upon division by 2, gives us 
\begin{equation}\label{eq:q2}
	q_2 = 2^{z_0 - 1} q_0^{\gamma_0} + 1. 
\end{equation}
Next, we consider the equation $(b+2) = (b) + 2$ divided by 2:
\begin{equation}\label{eq:case31a_b2}
	2^{y_2-1} q_2^{\beta_2} = q_0^{\beta_0} + 1.
\end{equation}
Substituting \eqref{eq:q2} in \eqref{eq:case31a_b2}, we obtain
\begin{equation}\label{eq:q0}
	2^{y_2-1} (2^{z_0 - 1} q_0^{\gamma_0} + 1)^{\beta_2} = q_0^{\beta_0} +1.
\end{equation}
Computing the 2-adic valuation of both sides of \eqref{eq:q0}, we find that
\begin{equation*}\label{eq:val}
y_2 - 1 = v_2(q_0^{\beta_0} +1).
\end{equation*}
Since $y_2-1\geq 2$, $\beta_0$ must be odd, and, since Lemma~\ref{lem:v2_oddexpo}  implies that $v_2(q_0^{\beta_0} +1) = v_2(q_0 + 1)$, 
\begin{equation}\label{eq:val2}
y_2 - 1 
= v_2(q_0 +1) 
\leq \log_2(q_0 +1).
\end{equation}
Considering equation~\eqref{eq:q0} modulo $q_0$,  we see that 
\[
	2^{y_2 - 1} \equiv 1 \pmod{q_0}.
\]
This implies that $2^{y_2 - 1} \geq q_0 + 1$, and so $y_2 - 1 \geq \log_2(q_0 + 1)$. Therefore, we have equality in \eqref{eq:val2}, which implies $q_0 + 1 = 2^{y_2-1}$, i.e.\ 
\[
	q_0 = 2^{y_2-1}-1.
\]
Since $3\nmid q_0 = 2^{y_2-1}-1$, we have that $y_2-1$ is odd. Thus, $y_2$ is even, and 
$q_0 = 2^{y_2 - 1} - 1 \equiv 1 \pmod{3}$.
Next, note that $2 q_2 = c+2 \equiv 1 \pmod{3}$, so $q_2 \equiv{-1} \pmod 3$. 
Moreover,
$1 \equiv b+ 2 = 2^{y_2}q_2^{\beta_2} \equiv 1 \cdot (-1)^{\beta_2} \pmod{3}$, so $\beta_2$ is even. 
Thus, $2^{y_2 - 2} q_2^{\beta_2} =: x_1^2$ is a square. 

Substituting this in \eqref{eq:case31a_b2}, we obtain the equation
\[
	2 \cdot x_1^2 - 1 = q_0^{\beta_0}.
\]
Lemma \ref{lem:two_squares_minus_one} implies $\beta_0 \leq 2$ (the exceptional solution leads to $x_1 = 78$, $q_0 = 23$, $\beta_0 = 3$, but then $23$ is not of the shape  $q_0 = 2^{y_2-1}-1$).

Since $\beta_0$ is odd (see argument above \eqref{eq:val2}), we get $\beta_0 = 1$. 
Thus, $b = 2 \cdot(2^{y_2-1}-1)$ and $b+2 = 2^{y_2}$.

\caselII{Case 3.2:} $y_2 = 2$ and $z_2 \geq 2$.
Recall the situation:
\begin{align}
(2,b,c) &= (2, 2 \cdot q_0^{\beta_0}, 2^{z_0} q_0^{\gamma_0}),\nonumber\\
(3,b+1,c+1) &= (3, 3^{y_1} q_1^{\beta_1}, 3^{z_1} q_1^{\gamma_1}),\nonumber\\ 
(4,b+2,c+2) &= (2^2, 4 \cdot q_2^{\beta_2}, 2 \cdot q_2^{\gamma_2}).\nonumber 
\end{align}

We have $2 \cdot q_2^{\gamma_2} = c+2\equiv 1 \pmod{3}$, which implies $q_2^{\gamma_2} \equiv - 1 \pmod{3}$. Thus, $q_2 \equiv - 1 \pmod{3}$ and $\gamma_2$ is odd. Moreover, we have $b+2 = 4 \cdot q_2^{\beta_2} \equiv q_2^{\beta_2} \equiv 1 \pmod{3}$, and so $\beta_2$ has to be even.

Now we consider the equations $b = (b+2) - 2$ and $c = (c+2)- 2$ divided by $2$:

\begin{align}
q_0^{\beta_0} = 2 q_2^{\beta_2} - 1, \label{eq:b}\\
2^{z_0-1} q_0^{\gamma_0} =  q_2^{\gamma_2} - 1. \label{eq:c}
\end{align}
The right hand side of \eqref{eq:c} is divisible by $q_2 - 1$. If $q_2 - 1$ is a power of 2, then we have $q_2 = 2^n + 1$. Otherwise, let $p>2$ be a prime divisor of $q_2 - 1$ and write $q_2 = kp + 1$. Then, by \eqref{eq:c}, $p$ is also a prime divisor of $q_0$. Substituting into \eqref{eq:b}, we obtain
\[
	q_0^{\beta_0} = 2 (kp + 1)^{\beta_2} - 1.
\]
Since $p\mid q_0$, we obtain a contradiction modulo $p$. Thus, $q_2 = 2^n + 1$. 
Since $\gamma_2$ is odd, we see from \eqref{eq:c} that $z_0 -1 = v_2(q_2^{\gamma_2}-1) = v_2((2^n + 1)^{\gamma_2} - 1) = n$, and so $n=z_0-1$ and 
\[
	q_2 =2^{z_0-1} +1.
\]

Next, we take \eqref{eq:c} to the power of $\beta_0$, and substitute $q_0^{\beta_0} = 2 q_2^{\beta_2} - 1$:
\[
	(2^{z_0-1})^{\beta_0} (2 q_2^{\beta_2} - 1)^{\gamma_0} = (q_2^{\gamma_2} - 1)^{\beta_0}.
\]
Considering the equation modulo $q_2 = 2^{z_0-1} + 1$, 
\[
	(-1)^{\beta_0} (-1)^{\gamma_0} \equiv (-1)^{\beta_0} \pmod{q_0},
\]
which implies $(-1)^{\gamma_0} \equiv 1 \pmod{q_0}$, and so $\gamma_0$ is even.

Also, since we are assuming $y_1, z_1 \neq 0$, we have that $3\nmid q_2 =2^{z_0 - 1} + 1$, and so $z_0-1$ is even.

Therefore, the left hand side of \eqref{eq:c} is a square, and hence, from Theorem \ref{thm:Catalan}, $\gamma_2 = 1$. 
Then \eqref{eq:c} becomes
\[
	2^{z_0-1} q_0^{\gamma_0}
	= (2^{z_0-1} +1)^1 -1,
\]
and so $\gamma_0 = 0$.

\caselII{Case 3.3:} $z_0 = 2$ and $y_2\geq 3$.
Recall the situation:
\begin{align}
(2,b,c) &= (2, 2 \cdot q_0^{\beta_0}, 4 \cdot q_0^{\gamma_0}),\nonumber\\
(3,b+1,c+1) &= (3, 3^{y_1} q_1^{\beta_1}, 3^{z_1} q_1^{\gamma_1}),\nonumber\\ 
(4,b+2,c+2) &= (2^2, 2^{y_2} q_2^{\beta_2}, 2 \cdot q_2^{\gamma_2}).\nonumber 
\end{align}
We have $4 \cdot q_0^{\gamma_0} = c \equiv -1 \pmod{3}$, which implies $q_0^{\gamma_0} \equiv -1 \pmod{3}$, and so $q_0 \equiv - 1 \pmod{3}$ and $\gamma_0$ is odd. On the other hand, $2 \cdot q_0^{\beta_0} = b \equiv -1 \pmod{3}$, so $\beta_0$ is even. Thus, $b = 2 \cdot q_0^{\beta_0} \equiv 2 \cdot 1 = 2 \pmod{8}$, which is a contradiction because $b = (b+2) - 2 \equiv -2 \pmod{8}$.
\end{proof}

\subsection{Finishing the proof of Theorem~\ref{thm:a2}}
\label{sec:a2_finish}

Now we finish the proof of Theorem~\ref{thm:a2} using the notation from \eqref{eq:a2_systemgeneral}.
By Lemma \ref{lem:a2_powers}, we may assume, without loss of generality, that either $b$ or $b+2$ is a power of two. We distinguish between these two cases.

\caselI{Case a:} $b$ is a power of two. Then we are in the following situation:
\begin{align}
(2,b,c) &= (2, 2^{y_0} , c),\nonumber\\
(3,b+1,c+1) &= (3, 3^{y_1} q_1^{\beta_1}, 3^{z_1} q_1^{\gamma_1}),\nonumber\\ 
(4,b+2,c+2) &= (2^2, 2 \cdot (2^{y_0-1} + 1), 2^{z_2} \cdot (2^{y_0-1} + 1)^{\gamma_2}).\nonumber 
\end{align}
Note that $b>2$, so $y_0 \geq 2$. If $y_0 = 2$, we are done with Lemma~\ref{lem:4}. If $y_0 = 3$, we get the family \eqref{eq:fam1}. Therefore, we may assume $y_0 \geq 4$.
Moreover, in view of Lemma~\ref{lem:a2_bodd}, we may assume that $z_2 \geq 1$.

Considering the equation $-1 = b - (b+1) = 2^{y_0} - 3^{y_1} q_1^{\beta_1}$,  Lemma~\ref{lem:23q} implies  that $\beta_1 \leq 2$.
Note that $\beta_1 = 0$ is impossible via Theorem \ref{thm:Catalan}. Now assume for a moment that $\beta_1 = 2$. 
Then we have $ 3^{y_1} q_1^{2} =  b+1 \equiv 1 \pmod 8$, which implies 
$3^{y_1} \equiv 1 \pmod{8}$, so $y_1$ is even as well. But then $b+1 = 2^{y_0}+1$ is a square, contradicting Theorem \ref{thm:Catalan}. Therefore, we have $\beta_1 = 1$. Then, $b+1 = 2^{y_0}+1 = 3^{y_1} q_1$ implies 
$q_1 = 3^{-y_1 } (2^{y_0}+1)$ and the equation $(c+2)- 1 = c+1$ becomes
\[
	2^{z_2} \cdot (2^{y_0-1} + 1)^{\gamma_2} - 1
	= 3^{z_1} q_1^{\gamma_1}
	= 3^{z_1} (3^{-y_1} (2^{y_0}+1))^{\gamma_1}
	= 3^{z_1 - y_1 \gamma_1} (2^{y_0}+1)^{\gamma_1}.
\]
Note that we have $y_0 - 1 \geq 3$, $z_2 \geq 1$, $\gamma_2 \geq 0$ and $\gamma_1 \geq 0$.
Then Proposition~\ref{prop:sit1} implies
\[
	(y_0-1, z_2, \gamma_2, \gamma_1) \in 
	\{
		(y_0-1, 1, 0, 0),
		(y_0-1, 1, 1, 1),
		(y_0-1, 2, 0, 0)
	\}.
\]
The first type of solution yields $c+2 = 2^1 \cdot (2^{y_0-1} + 1)^0 = 2$, the second type of solution yields $c+2 = 2^1 \cdot (2^{y_0-1} + 1)^1 = b+2$, and the third type of solution yields $c+2 = 2^2 \cdot (2^{y_0-1} + 1)^0 = 4$.

\caselI{Case b:} $b+2$ is a power of two. Then we are in the following situation:
\begin{align}
(2,b,c) &= (2, 2 \cdot (2^{y_2 - 1} - 1), 2^{z_0} \cdot (2^{y_2 - 1} - 1)^{\gamma_0}),\nonumber\\
 (3,b+1,c+1) &= (3, 3^{y_1} q_1^{\beta_1}, 3^{z_1} q_1^{\gamma_1}),\nonumber\\ 
 (4,b+2,c+2) &= (2^2, 2^{y_2}, c+2).\nonumber 
\end{align}
Note that $b+2>4$, so $y_2 \geq 3$. If $y_2 = 3$, we have $b=6$, and we are done with Lemma~\ref{lem:6}. Therefore, we my assume $y_0 \geq 4$. 
Moreover, in view of Lemma~\ref{lem:a2_bodd} we may assume that $z_0 \geq 1$.

Considering the equation $1 = (b+2) - (b+1) = 2^{y_2} - 3^{y_1} q_1^{\beta_1}$, we get from Lemma~\ref{lem:23q} that $\beta_1 \leq 2$.
Note that $\beta_1 = 0$ is impossible via Theorem \ref{thm:Catalan}. 
Now assume for a moment that $\beta_1 = 2$. Then 
$ 3^{y_1} q_1^{2} =  b+1 \equiv - 1 \pmod 8$, which is impossible. Therefore, we have $\beta_1 = 1$. 
Then, $b+1 = 2^{y_2}-1 = 3^{y_1} q_1$ implies 
$q_1 = 3^{-y_1 } (2^{y_2} - 1)$ and the equation $(c)+1 = (c+1)$ becomes
\[
	2^{z_0} \cdot (2^{y_2 - 1} - 1)^{\gamma_0} + 1
	= 3^{z_1} (3^{-y_1} (2^{y_2}-1))^{\gamma_1}
	= 3^{z_1 - y_1 \gamma_1} (2^{y_2}-1)^{\gamma_1}.
\]
Note that we have $y_2 - 1\geq 3$, $z_0 \geq 1$, $\gamma_0 \geq 0$ and $\gamma_1 \geq 0$.
Then Proposition~\ref{prop:sit2} implies 
\begin{align*}
	(y_2-1, z_0, \gamma_0, \gamma_1) \in 
	\{
		&(y_2-1, y_2 + 1, 1, 2),
		(y_2-1, 1, 1, 1),
		(y_2-1, 1, 0, 0),\\
		&(y_2-1, 3, 0, 0),
		(3, 2, 0, 1, -1)
	\}.
\end{align*}
The first type of solution leads to $c =  2^{y_2 + 1} \cdot (2^{y_2-1} - 1)^1$, which corresponds to family \eqref{eq:newFamily}. 
The second type of solution yields $c = 2^1 \cdot (2^{y_2-1} - 1)^1 = b$ and the third one yields $c = 2^1 \cdot (2^{y_2-1} - 1)^0 = 2$, neither of which  are of interest. The fourth type of solution leads to
$c = 2^3 \cdot (2^{y_2-1} - 1)^0 = 8$, which corresponds to family \eqref{eq:fam1}.
Finally, the fifth solution leads to $b = 2^{y_2} - 2 = 14$ and $c = 2^2 \cdot (2^3 - 1)^0 = 4$, corresponding to the triple $(2,4,14)$.
This completes the proof of Theorem~\ref{thm:a2}.

\section{On the equation $x^2-2=y^n$}\label{sec:x2-2}

In order to prove Theorem~\ref{thm:3x2md}, we will need rather strong bounds on the solutions of the equation $x^2-2 = y^n$. 
As mentioned in the introduction, this equation has been studied intensively, but is still not solved completely. In the following proposition, we will collect  results of Chen \cite{Chen2012}, together with an upper bound on the exponent $n$, which we will derive via  an application of Laurent's lower bounds for linear forms in two logarithms \cite{Laurent2008}. A similar result was achieved by Bugeaud, Mignotte, and Siksek (see \cite{BugeaudMignotteSiksek2006}), published  without details.

\begin{newprop}\label{prop:main}
Assume that $n \geq 2, x\geq 2$ and $y$ are integers with
\begin{equation}\label{eq:mainsquareeq}
	x^2 - 2 = y^n.
\end{equation}
Then the following hold:
\begin{enumerate}[(i)]
 \item $n$ is prime. 
  \item $41 \le n \leq 1237$. \label{it:nsmall}
  \item $n \equiv 13,17,19,23 \pmod{24}$. 
  \item $y \equiv -1 \pmod{Q}$, where $Q:=\prod_{q \in S}{q}$.  \label{it:ymodQ}
  \item $y>10^{102}$. 
\end{enumerate}
The set $S$ in (\ref{it:ymodQ}) is given by
$$
\begin{array}{l}
S= \left\{5, 7, 11, 13, 19, 23, 29, 31, 37, 41, 61, 67, 73, 89, 113, 127, 137, 149, 181, 191, 193, 197, 223, \right. \\
  233, 251, 257, 349, 373, 379, 421, 457, 461, 521, 547, 599, 617, 661, 677, 701, 761, 769, 811,  829, \\
\left.  881, 883, 953 \right\}.
\end{array}
$$

\end{newprop}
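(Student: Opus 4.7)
The plan is to combine existing work of Chen \cite{Chen2012} on the equation $x^{2} - 2 = y^{n}$ with a fresh application of Laurent's lower bound for linear forms in two logarithms (Theorem \ref{thm:Laurent}). Claims (i), (iii), (iv), the lower bound $n \geq 41$ in (ii), and the bound $y > 10^{102}$ in (v) I would quote directly from \cite{Chen2012}: the primality of $n$ comes from reducing composite exponents via Theorem \ref{thm:Nagell} and an elementary descent; the congruence conditions modulo $24$ on $n$ and the residue conditions on $y$ modulo each prime of $S$ follow from a careful analysis of the equation in $\Z[\sqrt{2}]$ together with quadratic reciprocity and local obstructions at the primes of $S$; and the numerical lower bounds for $n$ and for $y$ are then produced by a finite computation sieving out small exponents and small values of $y$ via those same local conditions.

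The genuinely new ingredient is the upper bound $n \leq 1237$ in (ii), for which I would follow a classical linear-forms-in-logarithms strategy. Since $y$ is necessarily odd (otherwise $x^{2} \equiv 2 \pmod 4$), the factors $x \pm \sqrt{2} \in \Z[\sqrt{2}]$ are coprime, and, since $\Z[\sqrt{2}]$ has class number one with fundamental unit $\varepsilon := 1 + \sqrt{2}$, we can write
\[
x + \sqrt{2} \;=\; \pm\, \varepsilon^{k}\, \alpha^{n}, \qquad 0 \leq k < n,
\]
for some $\alpha = a + b\sqrt{2} \in \Z[\sqrt{2}]$ with $N(\alpha) = a^{2} - 2b^{2} = \pm y$. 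Conjugating and subtracting yields $\pm 2\sqrt{2} = \varepsilon^{k}\alpha^{n} - \bar\varepsilon^{k}\bar\alpha^{n}$, and after ordering the conjugates so that the ratio $\gamma := \bar\alpha/\alpha$ satisfies $|\gamma| > 1$, this rearranges to
\[
\left|\, \gamma^{n}\, \varepsilon^{2k} - 1 \,\right| \;\ll\; y^{-n/2}.
\]
Taking logarithms produces the two-term linear form
\[
\Lambda \;:=\; n\,\log|\gamma| + 2k\,\log\varepsilon
\]
(with an appropriate sign), which satisfies $\log|\Lambda| \leq -\tfrac{n}{2}\log y + O(1)$.

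I would then apply Theorem \ref{thm:Laurent} to $|\Lambda|$ with $\alpha_{1} = \varepsilon^{2}$ and $\alpha_{2} = |\gamma|$. These are real algebraic numbers greater than $1$ of degree $D = 2$ over $\Q$, with heights $\h(\varepsilon^{2}) = \log\varepsilon$ and $\h(\gamma) \leq \tfrac{1}{2}\log y + \log\varepsilon$; multiplicative independence holds because any odd prime $p \mid y$ (which exists since $y > 10^{102}$) gives $\gamma$ a nontrivial $\mathfrak{p}$-adic valuation at a prime $\mathfrak{p}$ of $\Q(\sqrt{2})$ above $p$, whereas $\varepsilon$ is a global unit. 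The coefficients $n$ and $2k$ of $\Lambda$ are both bounded by $2n$. Laurent's theorem then yields a lower bound of the shape $\log|\Lambda| \geq -C\,(\log n + c_{0})^{2}\,\log y$, with $C$ depending on the auxiliary parameters $\rho, \mu$. Comparing with the upper bound eliminates a common factor of $\log y$ and produces $n \ll (\log n)^{2}$; inserting $y > 10^{102}$ from (v) and carefully optimizing $\rho$ and $\mu$ should force $n \leq 1237$. The principal obstacle will be precisely this parameter optimization: a naive application of Laurent (or of the simpler Corollary \ref{cor:Laurent}) gives a bound of order $10^{4}$ or worse, and reaching $1237$ will require an iterative tuning in which the large value of $\log y$ is fed back in at each step to sharpen the constant $C$ until the bound on $n$ stabilizes.
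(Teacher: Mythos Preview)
Your approach is essentially the one the paper takes: quote (i), (iii), (iv), (v), and $n\geq 41$ from the literature (Propositions~\ref{prop:SiksekSmallN}--\ref{prop:Chen}), then derive the upper bound in (ii) by factoring in $\Z[\sqrt{2}]$, forming a two-term linear form in $\log\varepsilon$ and $\log|\gamma|$, and applying Laurent. Two refinements in the paper are worth flagging.

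First, the paper does not work with a generic unit exponent $0\leq k<n$. It invokes an extra ingredient, Lemma~\ref{lem:r} (from \cite[Proposition~15.7.1]{Cohen2007II}), which forces $r=\pm 1$, so the coefficient of $\log(\sqrt{2}+1)$ in $\Lambda$ is the constant $2$ rather than something of size $n$. This makes $b_1/a_2$ negligible and cleans up the optimization; with your bound $2k<2n$ the term $b_1/a_2$ is of order $n/\log y$, which is small but not negligible, and you would likely land a little above $1289$ after tuning.

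Second, the value $1237$ does not drop out of Laurent directly. The paper first applies Corollary~\ref{cor:Laurent} to get a crude bound $n<17000$, then feeds $n_0=1289$ into the full Theorem~\ref{thm:Laurent} (with $\rho=26$, $\mu=0.55$, $y_0=10^{102}$) to obtain $n<1289$, and only then uses (i) and (iii) to conclude that the largest admissible prime is $1237$. Your ``iterative tuning'' remark points in this direction, but you should be aware that the final cut from $1289$ to $1237$ is arithmetic (primality plus the mod-$24$ congruence), not analytic.
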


We begin by stating a number of known results on equation (\ref{eq:mainsquareeq}). For small values of the exponent, we have the following.

\begin{knownprop}[{\cite[Lemma 15.7.3]{Cohen2007II}}]\label{prop:SiksekSmallN}
Let $n$ be prime with $2\leq n \leq 37$.
Then equation (\ref{eq:mainsquareeq})
has no integer solutions $(x,y)$ with $x\geq 2$.
\end{knownprop}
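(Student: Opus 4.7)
The plan is to reduce the equation, for each fixed prime $n$ with $2 \le n \le 37$, to a finite collection of Thue equations over $\Z$ and then verify that these have no relevant solutions. For $n = 2$, this is immediate: $x^2 - y^2 = 2$ factors as $(x-y)(x+y) = 2$, and since the two factors have the same parity, their product is either odd or divisible by $4$, ruling out the value $2$.

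For an odd prime $n$, I would work in $\mathcal{O} = \Z[\sqrt{2}]$, which is a principal ideal domain with fundamental unit $\epsilon = 1 + \sqrt{2}$ of norm $-1$. The equation factors as
\[
  (x + \sqrt{2})(x - \sqrt{2}) = y^n.
\]
An elementary $2$-adic check rules out $2 \mid y$: if $y$ were even, then so would $x$ be, and substituting $x = 2x_1$, $y = 2y_1$ into the equation leaves an odd integer on one side and an even integer on the other. So $y$ is odd. Any common divisor of the two factors in $\mathcal{O}$ then has to divide $2\sqrt{2}$, which is coprime to $y^n$. Hence $x + \sqrt{2}$ and $x - \sqrt{2}$ are coprime, and unique factorization, together with the fact that $-1 = (-1)^n$ is already an $n$-th power in $\mathcal{O}$, yields
\[
  x + \sqrt{2} = \epsilon^{k}\alpha^n
\]
for some $\alpha \in \mathcal{O}$ and $k \in \{0, 1, \ldots, n-1\}$ (the exponent $k$ can be reduced modulo $n$ since $\epsilon^n$ itself is an $n$-th power).

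Writing $\alpha = a + b\sqrt{2}$ with $a, b \in \Z$ and expanding $\epsilon^k(a + b\sqrt{2})^n$, the coefficient of $\sqrt{2}$ must equal $1$ while the rational part equals $x$. This produces, for each pair $(n, k)$, a Thue equation
\[
  F_{n,k}(a, b) = 1,
\]
where $F_{n,k}$ is an explicit binary form of degree $n$ over $\Z$ whose coefficients depend only on $n$ and $k$. The proof is then completed by verifying that no solution $(a,b)$ of any of these equations gives rise to an integer $x \ge 2$.

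The main obstacle is purely computational: one must dispatch roughly two hundred individual Thue equations (summing $n$ over the odd primes up to $37$, with $n$ residues $k$ per prime). Each such equation can be solved completely by the standard effective algorithm for Thue equations, combining Baker-type lower bounds for linear forms in logarithms with LLL reduction, as implemented in \texttt{PARI/GP} or \texttt{Magma}. There is no conceptual difficulty beyond this bookkeeping, which is precisely why the result is stated here as a ``known'' input from \cite[Lemma~15.7.3]{Cohen2007II}.
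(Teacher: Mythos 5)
The paper does not prove this statement at all: it is imported verbatim from Cohen--Siksek \cite[Lemma 15.7.3]{Cohen2007II}, so there is no internal proof to compare against. Your outline is nevertheless essentially the route the cited source takes, and the descent you describe is exactly the one the paper itself recalls in Section~\ref{sec:sub:fact}: $x,y$ odd, $x+\sqrt{2}$ and $x-\sqrt{2}$ coprime in the UFD $\Z[\sqrt{2}]$, hence $x+\sqrt{2}=\eps^{k}\alpha^{n}$ up to the unit ambiguity, and the $\sqrt{2}$-coefficient gives a degree-$n$ Thue equation. Your $n=2$ parity argument is fine.

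What you are missing is the one ingredient that makes the computation realistic, namely Lemma~\ref{lem:r} (Cohen's Proposition 15.7.1): for prime $n\geq 5$ the unit exponent can be pinned down to $r=\pm 1$ after normalizing it into the range $-(n-1)/2<r\leq (n-1)/2$. This collapses your roughly two hundred Thue equations to just two per prime, and, crucially, keeps the coefficients small; in your version the form attached to $\eps^{k}(a+b\sqrt{2})^{n}$ for $k$ near $n-1$ has coefficients of size about $(1+\sqrt{2})^{36}\approx 10^{13}$ in degree $37$, and solving such equations unconditionally (class group and unit computations in degree-$37$ fields) is far from routine bookkeeping. Two smaller points: for $k=0$ the form $F_{n,0}(a,b)$ is divisible by $b$, so it is not a genuine (irreducible) Thue form and that case must be split off as $b=\pm1$; and $n=3$ is more cleanly dispatched as the Mordell equation $Y^{2}=X^{3}+2$, whose integral points are classical. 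None of this is a conceptual gap --- your reduction is sound --- but as written the ``purely computational'' final step is substantially harder than you suggest, and the $r=\pm1$ lemma is the reason the source's version of this argument actually terminates.
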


For general prime $n$, we have the next result by Chen.

\begin{knownprop}[{\cite[Theorem 5]{Chen2012}}]\label{prop:Chen_congruences}
Let $n$ be prime and assume that  equation (\ref{eq:mainsquareeq})
has an integer solution $(x,y)$ with $x\geq 2$. Then $n \equiv 13, 17, 19, 23 \pmod{24}$.
\end{knownprop}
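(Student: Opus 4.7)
The plan is to follow Chen's modular approach \cite{Chen2012} to generalized Fermat equations of signature $(n, n, 2)$. First I would perform elementary reductions: for $n \geq 3$ prime, working modulo $4$ shows that both $x$ and $y$ must be odd, and then modulo $8$ forces $y \equiv -1 \pmod{8}$. Since $\mathbb{Z}[\sqrt{2}]$ is a PID with fundamental unit $1 + \sqrt{2}$, the factorization $(x - \sqrt{2})(x + \sqrt{2}) = y^n$ has coprime factors (their difference $2\sqrt{2}$ is coprime to the odd element $y$), whence
\[
x + \sqrt{2} \;=\; \pm (1 + \sqrt{2})^{\epsilon} (a + b \sqrt{2})^{n}, \qquad \epsilon \in \{0, 1\}, \ a, b \in \mathbb{Z},
\]
where $n$-th powers of the fundamental unit have been absorbed into $(a + b\sqrt{2})^{n}$ (using that $n$ is odd).

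To a hypothetical solution I would attach a Frey elliptic curve $E$ of signature $(n, n, 2)$, either defined over $\mathbb{Q}$ (as a $\mathbb{Q}$-curve) or directly over the real quadratic field $\mathbb{Q}(\sqrt{2})$. The key properties of $E$ are that its minimal discriminant has the shape $2^{\ast}\cdot y^n$ and that its conductor outside $n$ takes only finitely many possible values, depending on $\epsilon$ and on $(a, b) \pmod{2}$. Modularity (Wiles and Breuil--Conrad--Diamond--Taylor over $\mathbb{Q}$, resp.\ Freitas--Le Hung--Siksek over real quadratic fields) attaches a weight-$2$ (Hilbert) newform to $E$, and the residual representation $\overline{\rho}_{E, n}$ is absolutely irreducible for $n$ sufficiently large. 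Applying Ribet's level-lowering theorem, $\overline{\rho}_{E, n}$ is congruent modulo a prime above $n$ to the mod-$n$ representation of a newform $g$ of some small fixed level $N_{0}$ independent of the solution.

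The mod-$24$ congruence is then extracted by comparing Hecke eigenvalues. For each of the finitely many newforms $g$ of level dividing $N_{0}$ and each small auxiliary prime $p \nmid n N_{0}$, the relation $a_{p}(g) \equiv a_{p}(E) \pmod{\mathfrak{n}}$ becomes a divisibility $n \mid \mathrm{Res}(g, p)$ for an explicit integer depending on the finitely many admissible residues of $(x, y) \pmod{p}$. Taking $p = 3$ produces the mod-$3$ information on $n$, while a careful $2$-adic Tate algorithm computation of the conductor exponent of $E$ at $2$ yields the mod-$8$ information; together these are designed to rule out the classes $n \equiv 1, 5, 7, 11 \pmod{24}$. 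The hardest step will be (a) identifying the correct Frey curve and computing its conductor at $2$ exactly (this is where the mod-$8$ constraint is born), and (b) enumerating the candidate newforms at level $N_{0}$ and verifying that for each the $a_{3}$-comparison excludes precisely the remaining offending residues mod $3$; this explicit classification is the technical heart of \cite{Chen2012}.
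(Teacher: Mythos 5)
First, a point of context: the paper does not prove this statement at all; it is imported verbatim from Chen (Theorem 5 of \cite{Chen2012}) and used as a black box, so there is no internal proof to compare yours against. Your outline does correctly identify the machinery behind Chen's result --- the factorization of $x^2-2$ in $\Z[\sqrt{2}]$, a Frey ($\Q$-)curve attached to the resulting identity, modularity, level lowering, and elimination of newforms at the lowered level --- so as a description of where the theorem comes from it is on target.

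As a proof, however, it has a genuine gap: every step that actually produces the congruence is deferred. You do not write down the Frey curve, compute its conductor at $2$, establish irreducibility of $\overline{\rho}_{E,n}$, enumerate the newforms at the lowered level, or carry out the eigenvalue comparisons; these constitute the entire content of the theorem. More concretely, the mechanism you propose for extracting the modulus $24$ --- a mod $8$ condition coming from the conductor at $2$ together with an \emph{independent} mod $3$ condition coming from comparing $a_3$ --- cannot yield the stated conclusion. The excluded residues are $n\equiv 1,5,7,11\pmod{24}$, and this set is not cut out by a condition modulo $8$ intersected with a condition modulo $3$: every unit residue modulo $8$, and every nonzero residue modulo $3$, occurs among both the excluded and the allowed classes (e.g.\ $1$ and $17$ agree modulo $8$, and $1$ and $13$ agree modulo $3$, yet in each pair one class is excluded and one allowed). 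What separates the two sets is the \emph{pairing} of $n\bmod 8$ with $n\bmod 3$, a reciprocity-type splitting condition, so the architecture of your final step would need to be redesigned, not merely filled in. A smaller slip in the setup: the exponent $\epsilon$ of the fundamental unit can only be normalized modulo $n$ by absorbing $n$-th powers, so a priori it ranges over a full set of representatives of $\Z/n\Z$ rather than over $\{0,1\}$; pinning it down to $\pm 1$ requires a separate argument (this is exactly Lemma~\ref{lem:r} in the paper, quoted from Cohen), and a norm computation shows the normalization $\epsilon\in\{0,1\}$ as you wrote it is not attainable.
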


Moreover, Chen proved the following (see \cite[Corollary 25 and its proof]{Chen2012}).

\begin{knownprop}[{\cite[Corollary 25]{Chen2012}}]\label{prop:Chen}
Let $S=\{5, \ldots , 953\}$ be the set of primes defined in Proposition~\ref{prop:main}, and $Q = \prod_{q \in S} q$.
Let $p$ be prime. 
If equation (\ref{eq:mainsquareeq}) has a solution in integers $x,y$, 
then
\[
	y \equiv -1 \pmod{Q}.
\]
In particular, if $x\geq 2$, then $y > 10^{102}$.
\end{knownprop}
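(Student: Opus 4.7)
The plan is to separate the statement into its two components: the congruence $y\equiv -1\pmod Q$, which is the substantive content and would be imported from \cite{Chen2012}, and the size bound $y>10^{102}$, which follows from the congruence by an elementary argument.

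For the congruence, my strategy is to invoke Chen's Corollary~25 essentially as a black box. Chen's argument proceeds one prime $q\in S$ at a time: using Proposition~\ref{prop:Chen_congruences}, which pins $n$ down modulo~$24$, together with the structure of $\mathbb{F}_q^{\times}$ and the subgroup of $n$-th powers therein, he verifies that $y^n+2$ is a nonzero quadratic residue modulo $q$ only when $y\equiv -1\pmod q$. The set $S$ consists precisely of those primes at which this local obstruction is strong enough to force the congruence for every admissible residue class of $n$ modulo $q-1$. Combining the $46$ individual conclusions via the Chinese Remainder Theorem then gives $y\equiv -1\pmod Q$.

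For the bound $y>10^{102}$, the plan is first to compute $Q=\prod_{q\in S}q$ explicitly (a short calculation, for instance in Sage or PARI/GP; one verifies that $\sum_{q\in S}\log_{10}q>102$, so in particular $Q-1>10^{102}$), and second to rule out the small integer values of $y$ left over by the congruence. Since we are in the setting of Proposition~\ref{prop:Chen_congruences}, $n$ is an odd prime, so $y^n+2\ge 0$ forces either $y\ge 0$ or $y=-1$; any $y\le -2$ would give $y^n+2\le -2^n+2<0$. The value $y=-1$ gives $x^2=1$, contradicting $x\ge 2$. Hence $y\ge 0$, and $y\equiv -1\pmod Q$ then implies $y\ge Q-1>10^{102}$.

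The genuine difficulty lies entirely in Chen's Corollary~25, whose proof uses Frey curves and the modular method and is not reproduced here; the remaining steps are a brief arithmetic verification together with a short case distinction on the sign of $y$.
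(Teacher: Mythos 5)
Your proposal is correct and matches the paper's treatment: the paper states this as a known result, citing Chen's Corollary 25 (and its proof) for the congruence $y\equiv -1\pmod{Q}$, with the bound $y>10^{102}$ following exactly as you describe, by computing $Q>10^{102}$ and noting that $x\ge 2$ with $n$ an odd prime forces $y\ge 2$, hence $y\ge Q-1$. The only caveat is that your sketch of Chen's internal argument is speculative, but since you invoke the corollary as a black box—as the paper does—this does not affect the correctness of the proof.
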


\subsection{Factorization of $x^2 - 2$}\label{sec:sub:fact}

Assume that the equation
\begin{equation}\label{eq:factinZ2}
	y^n = x^2 - 2 = (x+\sqrt{2})(x-\sqrt{2})
\end{equation}
is satisfied for some integers $n\geq 2$, $x\geq 2$, and $y\geq 2$. 
First, observe that $x$ and $y$ are odd since otherwise we obtain a contradiction modulo $4$.
Note that $\Z[\sqrt{2}]$ is a unique factorization domain, with unit group  given by $\Z[\sqrt{2}]^\times = \{\pm(\sqrt{2} + 1)^r, r \in \Z\}$.

Assume for the moment that the factors on the right hand side of \eqref{eq:factinZ2} 
have a non-unit common factor  in $\Z[\sqrt{2}]$. Then this factor also divides $(x+\sqrt{2})-(x-\sqrt{2}) = 2\sqrt{2}$, whence $y$ is even, a contradiction. Therefore, $(x+\sqrt{2})$ and $(x-\sqrt{2})$ are coprime in $\Z[\sqrt{2}]$, whereby, 
since the right hand side of \eqref{eq:factinZ2} is an $n$-th power, the prime factors in $(x+\sqrt{2})$ and $(x-\sqrt{2})$ must occur as $n$-th powers as well. In particular, we have $x+\sqrt{2} = (\sqrt{2} + 1)^r (a + b \sqrt{2})^n$ for some integers $r, a, b$. Conjugating, we obtain
\begin{equation}\label{eq:factorization}
\begin{cases}
x+\sqrt{2}=(\sqrt{2} + 1)^r(a+b\sqrt{2})^n,  \\
x-\sqrt{2}=(\sqrt{2} - 1)^r(a-b\sqrt{2})^n,
\end{cases}
\end{equation}
and
\[
	y = a^2 - 2b^2.
\]
We may thus assume that $a$ and $b$ are integers with $a\geq 1$, $b\neq 0$, $a+b\sqrt{2}>0$ and $a- b\sqrt{2} >0$, and  that $-(n-1)/2 <r \le (n-1)/2$, by including all $n$-th powers of the fundamental unit in the factor $(a + b\sqrt{2})^n$.
We have the following.
\begin{knownlem}[{\cite[Proposition 15.7.1]{Cohen2007II}}]\label{lem:r}
If $n$ is a prime $\geq 5$, then with the above notation we have $r = \pm 1$.
\end{knownlem}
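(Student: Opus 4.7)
My plan is to extract from the factorization a relation of Lucas--Lehmer type that can be controlled by the theorem of Bilu, Hanrot, and Voutier on primitive prime divisors. Subtracting the two equations in \eqref{eq:factorization} yields
\[
	\alpha^r \beta^n - \bar\alpha^r \bar\beta^n \;=\; 2\sqrt{2},
\]
with $\alpha = 1+\sqrt{2}$ and $\bar\alpha = \sqrt{2}-1 = \alpha^{-1}$. Expanding $\alpha^r = A_r + B_r\sqrt{2}$ (so $A_r^2 - 2B_r^2 = (-1)^r$) and $\beta^n = P_n + Q_n\sqrt{2}$, and equating the coefficient of $\sqrt{2}$ in $\alpha^r\beta^n = x+\sqrt{2}$, gives the core scalar identity
\[
	A_r Q_n + B_r P_n = 1.
\]
Moreover $P_n = V_n/2$ and $Q_n = b\,U_n$, where $(U_n,V_n)$ denote the Lucas and companion Lucas sequences attached to the pair $(\beta,\bar\beta)$ with parameters $(2a,y)$, so the identity rewrites as $b A_r U_n + (B_r/2) V_n = 1$.

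To deduce $r = \pm 1$ for $n\geq 5$ prime, I would combine this identity with the theorem of Bilu, Hanrot, and Voutier. Dividing through and rearranging recasts the relation, for $|r|\geq 2$, as a statement that the $n$-th term of a Lehmer sequence attached to an auxiliary pair $(\omega,\bar\omega)$ with $\omega\bar\omega = y$ (built from $\alpha^{r/n}\beta$ in a suitable radical extension of $\Q(\sqrt{2})$) fails to have a primitive prime divisor. Since $n$ is prime and $n\geq 5$, the Bilu--Hanrot--Voutier classification then forces $(\omega,\bar\omega)$ into a short explicit list of defective pairs, and a case-by-case check, using the positivity constraints $y = a^2 - 2b^2 > 0$ and $\beta,\bar\beta > 0$ that follow from $x\geq 2$ and $n$ odd, should eliminate every defective pair except those corresponding to $r = \pm 1$.

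The main obstacle is twofold. First, setting up the Lehmer pair correctly when $n \nmid r$ requires careful bookkeeping in a radical extension and verification that the BHV hypotheses are actually met there. Second, the small primes $n \in \{5,7,11,13,17,19,23,29\}$ lie below the universal BHV threshold and must be eliminated by hand using the explicit table of exceptional Lehmer sequences. The edge case $r = 0$ is a separate sub-problem, reducing to $bU_n = \pm 1$, which is handled likewise by BHV together with the classical list of Lucas sequences lacking primitive divisors. The heart of the work is thus careful casework rather than a genuinely new idea.
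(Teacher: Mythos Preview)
The paper does not prove this lemma at all: it is quoted from Cohen's book and used as a black box, so there is no ``paper's own proof'' to compare against.

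That said, your plan has a genuine gap. The scalar identity $A_rQ_n+B_rP_n=1$ (equivalently $2bA_rU_n+B_rV_n=2$) is correct and is indeed the natural starting point. Your treatment of $r=0$ via BHV on the Lucas pair $(\beta,\bar\beta)$ is standard and works. The problem is the step for $|r|\ge 2$: you propose to manufacture a Lehmer pair $(\omega,\bar\omega)$ out of $\alpha^{r/n}\beta$ ``in a suitable radical extension of $\Q(\sqrt 2)$'' and then invoke Bilu--Hanrot--Voutier. But BHV is a theorem about Lehmer pairs over~$\Q$: it requires $(\omega+\bar\omega)^2$ and $\omega\bar\omega$ to be nonzero coprime \emph{rational} integers and $\omega/\bar\omega$ not a root of unity. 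A pair sitting in a genuine radical extension of $\Q(\sqrt 2)$ does not satisfy these hypotheses, so BHV simply does not apply. This is not the ``careful bookkeeping'' obstacle you flag; it is structural, and it is precisely why the determination of the unit exponent in Lebesgue--Nagell equations $x^2+D=y^n$ is \emph{not} carried out by a primitive-divisor argument in the literature.

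The route taken in Cohen's proof is different and more elementary: one exploits the identity $A_rQ_n+B_rP_n=1$ directly, using the explicit $2$-adic and small-prime behaviour of the Pell-type sequences $(A_r),(B_r)$ together with the normalisation $|r|\le (n-1)/2$, to force $r=\pm 1$. If you want a self-contained argument, that congruence/valuation route is the one to pursue; the attempt to absorb the unit into a Lehmer pair cannot be made to work.
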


\subsection{Proof of Proposition~\ref{prop:main}: an upper bound for $n$}

Suppose now that we have a solution in integers $x, y, n$ to
\begin{equation}\label{eq:mainsquare2}
	x^2 - 2 = y^n,
\end{equation}
with $n\geq 2$ and  $x \geq 2$. We may assume, without loss of generality, from Proposition \ref{prop:SiksekSmallN}, that $n \geq 41$, and from Proposition \ref{prop:Chen}, that $y > 10^{102}$.

From (\ref{eq:factorization}) and Lemma \ref{lem:r}, we have
\begin{equation*}\label{eq:linform-mult}
\left| (\sqrt{2} + 1)^{\kappa} \left(\frac{a-b\sqrt{2}}{a+b\sqrt{2}}\right)^n -1  \right|
= \frac{2\sqrt{2}}{x+\sqrt{2}}
< 0.5,
\end{equation*}
where $\kappa = \pm 2$. Since for any $z \in \R$ with $|z-1| \leq \frac{1}{2}$ one has $|\log{z}|< 2 |z-1|$, we obtain
\begin{equation}\label{eq:linform}
	|\Lambda|:=	
	\left| 
		n \log \left(\frac{a-b\sqrt{2}}{a+b\sqrt{2}}\right) 
		\pm 2  \log (\sqrt{2} + 1) 
	\right|
	< \frac{4\sqrt{2}}{x+\sqrt{2}}.
\end{equation}

Next, we check that $(a-b\sqrt{2})/(a+b\sqrt{2})$ and $(\sqrt{2} + 1)$ are multiplicatively independent. Since $(\sqrt{2} + 1)$ is a unit, we only need to verify that $(a-b\sqrt{2})/(a+b\sqrt{2})$ is not. But this is clear from the factorization argument in Section~\ref{sec:sub:fact}.

We begin by applying Lemma \ref{cor:Laurent} to obtain a lower bound for $|\Lambda| = |b_2 \log \alpha_2 - b_1 \log \alpha_1|$ with
\begin{equation}\label{eq:Lambda-variables}
	b_2 = n, \quad
	\alpha_2 = \frac{a-b\sqrt{2}}{a+b\sqrt{2}}, \quad
	b_1 = 2, \quad
	\alpha_1 = \sqrt{2} \pm  1.
\end{equation}
Supposing that $a>0$ and $b<0$, it follows that $\alpha_2 > 1$ and hence from (\ref{eq:linform}), $\alpha_1 =\sqrt{2}+1$. We have $D=2$. 
Since $\h(\alpha_1) = \h(\sqrt{2} + 1) = 1/2 \cdot \log(\sqrt{2} + 1)$, we can take $\log A_1=1/2$. Further, 
\begin{align*}
	\alpha_2 
	= \frac{a-b\sqrt{2}}{a+b\sqrt{2}}
	= \left( \frac{x- \sqrt{2}}{x + \sqrt{2}} \cdot  
		\left( \frac{\sqrt{2} + 1}{\sqrt{2} - 1} \right)
	\right)^{1/n} 
	= \left( \frac{x- \sqrt{2}}{x + \sqrt{2}} \cdot  
		(\sqrt{2} + 1)^{2}
	\right)^{1/n}.
\end{align*}
Since $(x+\sqrt{2})/(x-\sqrt{2}) < \sqrt{2} + 1$, we obtain
\begin{equation*}
	\alpha_2
	< \left( (\sqrt{2} + 1)^{3 }\right)^{1/n}
	= (\sqrt{2} + 1)^{3/n },
\end{equation*}
and therefore
\begin{equation}\label{eq:logalpha2}
	\log \alpha_2 < \frac{3}{n} \log(\sqrt{2} + 1).
\end{equation}
In order to estimate the height of $\alpha_2 = (a-b\sqrt{2})/(a+b\sqrt{2})$, we note  that the conjugate is $(a+b\sqrt{2})/(a-b\sqrt{2}) = 1/\alpha_2$, and therefore the minimal polynomial of $\alpha_2$ is
\[
	X^2 - 2 \cdot \frac{a^2 + 2b^2}{	a^2 - 2b^2} \cdot X + 1
	= X^2 - 2 \cdot \frac{a^2 + 2b^2}{y} \cdot X + 1.
\]
Therefore, $\alpha_2$ is a root of the polynomial $f(X) = y X^2 - 2(a^2 + 2 b^2) X + y$, whence
\[
	\h(\alpha_2)
	\leq \frac{1}{2} (\log y + \log \alpha_2).
\]
We may therefore take
$$
\log A_2 =  \frac{1}{2} \left( \log y + \frac{3}{n} \log(\sqrt{2} + 1) \right).
$$
It follows that
$$
b^\prime = \frac{2}{\log y + \frac{3}{n} \log(\sqrt{2} + 1)} + n,
$$
whereby, crudely from $n \geq 41$ and $y > 10^{102}$, we have
$$
n< b^\prime < n+1.
$$
If we suppose that $n > 8000$, it follows that 
$$
 \log b^\prime + 0.38 > 9,
 $$
whence Lemma \ref{cor:Laurent} implies that
$$
\log |\Lambda| \geq -81.2 \left(  \log (n+1) + 0.38 \right)^2 \left( \log y + \frac{3}{n} \log(\sqrt{2} + 1) \right).
$$
Again appealing to $n > 8000$ and $y > 10^{102}$, we thus have
$$
\log |\Lambda| \geq -89 \log^2 n \log y.
$$

In the other direction, we will  combine this with an upper bound from \eqref{eq:linform}. We have
\begin{align*}
	|\Lambda|
	< \frac{4\sqrt{2}}{x+\sqrt{2}}
	< \frac{4\sqrt{2}}{y^{n/2}},
\end{align*}
which implies
\begin{equation} \label{upper}
	\log |\Lambda| 
	< \log (4\sqrt{2}) - \frac{n}{2} \log y.
\end{equation}
We thus have
$$
 n  < 178 \log^2 n + \frac{2 \log (4\sqrt{2}) }{\log y}.
$$
It follows, from $y > 10^{102}$, that $n < 17000$.

To sharpen this result, we will argue more carefully, appealing to Theorem~\ref{thm:Laurent}.
For any $\rho >1$ we have
\[
	(\rho - 1) \log \alpha_1 + 2 D \h (\alpha_1)
	= (\rho + 1) \log(\sqrt{2} + 1).
\]
Thus, if $\rho \geq 2$, we can set 
\begin{equation}\label{eq:a1}
	a_1 = (\rho + 1) \log(\sqrt{2} + 1)
	\geq \max\{ 1, (\rho-1) \log \alpha_1 + 2 D \h(\alpha_1)\}.
\end{equation}
To choose the parameter $a_2$, note that,  for any $\rho > 1$ we have, using the above estimate and \eqref{eq:logalpha2},
\begin{align*}
	(\rho - 1)\log \alpha_2 + 2D \h(\alpha_2)
	&< (\rho - 1)\log \alpha_2 + 2 (\log y + \log \alpha_2)\\
	&= (\rho + 1) \log \alpha_2 + 2 \log y\\
	&< (\rho + 1)  \frac{3}{n} \log(\sqrt{2} + 1) + 2 \log y\\
	&< \left(
		2+\frac{(\rho+1)\log(\sqrt{2}+1)}{\log{y_0}}\cdot \frac{3}{n}  
		\right) \cdot \log{y},
\end{align*}
where $y_0> 1 $ is any lower bound for $y$. 
 Let $n_0\geq 41$ be any fixed lower bound for $n$.
Thus we can choose
\begin{equation}\label{eq:a2-nprime}
	a_2 = 
	\left(2+\frac{(\rho+1)\log(\sqrt{2}+1)}{\log{y_0}}\cdot \frac{3}{n_0}  \right) \cdot \log{y}.
\end{equation}
Now we set
\begin{equation}\label{eq:param1}
	\mu = 0.55, \quad
	\rho = 26, \quad
	y_0 = 10^{102} \; \; \mbox{ and } \; \; 
	n_0 = 1289.
\end{equation}
Then we can compute
\begin{align*}
\sigma = \frac{1 + 2\mu - \mu^2}{2} = 0.89875, \quad
\lambda = \sigma \log \rho =0.89875 \log 26,
\end{align*}
and obtain
$$
a_1 = (\rho + 1) \log(\sqrt{2} + 1) = 27 \log(\sqrt{2} + 1)
$$
and
\begin{equation} \label{turtle}
2.00023 \log y < a_2 < 2.00024 \log y.
\end{equation} 
Since $y\geq y_0$, we find from (\ref{eq:a2-nprime}) that $a_2 > 469.78$.
Recall that $b_1 = 2 < n$ and $b_2 = n$. We thus  obtain
\begin{equation} \label{Wuggles}
\log\left(\frac{b_1}{a_2}+\frac{b_2}{a_1}\right)
	<  \log\left(\frac{2}{469.78} + \frac{n}{ 27 \log(\sqrt{2} + 1)} \right)
	=\log(n)- \bsubtr,
\end{equation}	
where, from $n \geq 1289$, 
\[ 
	3.1694 < \bsubtr < 3.1696.
\]
Now we can set 
\begin{align*}
	&h 
	= \max\{ 2 (\log n- \bsubtr + \log \lambda + 1.75) + 0.06, \lambda, \log 2 \}
	= 2 \log n - \hsubtr,
\end{align*}
where
\[
	\hsubtr = 2\bsubtr - 2\log \lambda - 3.56
\]
and we have used the fact that 
$$
2 \log n - \hsubtr \geq 2 \log n_0 - \hsubtr > \lambda >\log 2.
$$
We thus have
$$
0.6300 < \hsubtr < 0.6305.
$$

Next, we can easily check that $a_1a_2 \geq \lambda^2$ holds.
Moreover, we have 
\[
	h \geq 2 \log n_0 - \hsubtr := h_0 > 13.6927,
\]
and therefore
\[
	H \geq \frac{h_0}{\lambda} + \frac{1}{\sigma} := H_0 > 5.7887,
\]
\[
	\omega \leq \omega_{\max}
	:= 2 \left( 1 + \sqrt{1 + \frac{1}{4 H_0^2}} \right)
	< 4.0075,
\]
and
\[
	\theta \leq \theta_{\max}
	:= \sqrt{1 + \frac{1}{4 H_0^2}} + \frac{1}{2H_0}
	< 1.0901.
\]
We therefore have
$$
C \leq C_{\max}
	= \frac{\mu}{\lambda^3 \sigma} \left(
		\frac{\omega_{\max}}{6} + \frac{1}{2} \sqrt{
			\frac{\omega_{\max}^2}{9} 
			+ \frac{8 \lambda \omega_{\max}^{5/4} \theta_{\max}^{1/4}}{3 \sqrt{a_1 a_{2}} H_0^{1/2}}
			+ \frac{4}{3} \left( \frac{1}{a_1} + \frac{1}{a_{2}} \right)
			\frac{\lambda \omega_{\max}}{H_0}		
		}
	\right)^2,
$$
where, from (\ref{turtle}), 
$$
a_1 =22 \log(\sqrt{2} + 1) \; \; \mbox{ and } \; \;  a_2 > 2.00023 \log y > 2.00023 \cdot 102 \log 10.
$$
We thus have that
$$
C \leq C_{\max} < 0.0471
$$
and
\begin{align*}
C' \leq C_{\max}'
	:= \sqrt{\frac{C_{\max} \sigma \omega_{\max} \theta_{\max}}{\lambda^3 \mu}}
	<  0.1158.
\end{align*}
Now we estimate the summands (after dividing out by $\log y$) in the lower bound for $|\Lambda|$:
\begin{equation} \label{eq:LaurentBound1}
\frac{C \left(h + \frac{\lambda}{\sigma}\right)^2 a_1 a_2}{\log{y}}
	<  C_{\max} \left(h+\frac{\lambda}{\sigma}\right)^2 a_1 \cdot 2.00024
	< 2.2420 \left(h+\frac{\lambda}{\sigma}\right)^2, 
\end{equation} 
\begin{equation} \label{eq:LaurentBound2}
\frac{\sqrt{\omega\theta}\left(h+\frac{\lambda}{\sigma}\right)}{\log{y}}
	\leq \frac{\sqrt{\omega_{\max}\theta_{\max}}\left(h+\frac{\lambda}{\sigma}\right)}{\log{y_0}}	
	<0.0089 \left(h+\frac{\lambda}{\sigma}\right),
\end{equation}
\begin{align}
\frac{\log\left( C'\left(h+\frac{\lambda}{\sigma}\right)^2a_1a_2 \right)}{\log{y}}
	&\leq  \frac{\log\left( C'_{\max} \left(h+\frac{\lambda}{\sigma}\right)^2 a_1 a_2 \right)}{\log{y}} \nonumber\\
	& \leq  \frac{\log C'_{\max} + 2 \log \left(h+\frac{\lambda}{\sigma}\right) + \log a_1 + \log (2.00024)}{\log y_0} + \frac{\log \log y}{\log y} \nonumber \\
	& \leq \frac{\log C'_{\max} + \log a_1 + \log (2.00024)}{\log y_0} + \frac{ 2 }{\log y_0}\log \left(h+\frac{\lambda}{\sigma}\right) + \frac{\log \log y_0}{\log y_0} \nonumber \\
	&<  0.0306 + 0.0086 \log \left(h+\frac{\lambda}{\sigma}\right).
	\label{eq:LaurentBound3}
\end{align}
The bounds (\ref{eq:LaurentBound1})--(\ref{eq:LaurentBound3}), together  with  Theorem~\ref{thm:Laurent},  imply the inequality
\[
	\frac{\log |\Lambda|}{\log y}
	> -2.2420 \left(h+\frac{\lambda}{\sigma}\right)^2 - 0.0089 \left(h+\frac{\lambda}{\sigma}\right)  - 0.0306 - 0.0086 \log \left(h+\frac{\lambda}{\sigma}\right).
\]
Denoting 
$$
T(h) = 2.2420 \left(h+\frac{\lambda}{\sigma}\right)^2 + 0.0089 \left(h+\frac{\lambda}{\sigma}\right)  + 0.0306 + 0.0086 \log \left(h+\frac{\lambda}{\sigma}\right),
$$
and combining this with inequality (\ref{upper}), we thus have
\begin{equation} \label{final}
	n 
	< \frac{2 \log (4\sqrt{2})}{\log y} - 2 \frac{\log |\Lambda| }{\log y}
	< \frac{\log 32}{\log y_0} + 2 T(h).
\end{equation}
From the fact that
$$
h < 2 \log n - 0.6300,
$$
a short calculation verifies that  inequality (\ref{final}) fails for each integer $n$ with $1289 \leq  n < 17000$. It therefore follows that $n < 1289$, whence, from Propositions \ref{prop:SiksekSmallN} and
\ref{prop:Chen_congruences}, we have that $n$ is necessarily prime and that $n \leq 1237$ as claimed.
This completes the proof of  
Proposition~\ref{prop:main}~\eqref{it:nsmall}.

\begin{rem*}
If  we appeal somewhat carefully to a more general theorem of Laurent \cite[Theorem 1]{Laurent2008} and enlarge the set $S$ described in Proposition~\ref{prop:main}, it is possible to slightly sharpen our upper bound upon the exponent $n$ in 
Proposition~\ref{prop:main}, to roughly $n < 1000$.
\end{rem*}

\section{Solving the equations from Problem \ref{probl:solveEquations}}\label{sec:problem_equations}

\begin{newprop}\label{prop:eq48}
The equation
\begin{equation}\label{eq:48}
	((d^x + 1)^s + 1)^t - d^y  = 2
\end{equation}
has no solution in integers $d,x,y,s,t$ with $x \geq 1$ and ${d,s,t,y} \geq 2$.
\end{newprop}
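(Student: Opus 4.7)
The plan is to analyze equation~\eqref{eq:48} through the $2$-adic valuation $v_2$, exploiting the fact that the right-hand side $d^y+2$ has a very constrained $v_2$. The case of even $d$ is immediate: then $d^x+1$ is odd, so $B := (d^x+1)^s + 1$ is even and $v_2(B^t) \geq t \geq 2$, while $v_2(d^y) \geq 2 > 1 = v_2(2)$ forces $v_2(d^y + 2) = 1$, a contradiction.

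Now suppose $d$ is odd. Set $A := d^x+1$ and $\alpha := v_2(A) \geq 1$; then $B = A^s+1$ is odd, and the equation rewrites as $B^t - 1 = d^y + 1$. A preliminary step is to show that $y = qx$ with $q$ odd. Indeed, from $B \equiv 1 \pmod A$ we get $d^y \equiv -1 \pmod A$, and since $d^x \equiv -1 \pmod A$, writing $y = qx + r$ with $0 \leq r < x$ yields $(-1)^q d^r \equiv -1 \pmod A$; as $|d^r \pm 1| < A$ for $0 \leq r < x$, this forces $r = 0$ and $q$ odd.

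With $q$ odd, Lemma~\ref{lem:v2_oddexpo} gives $v_2(d^y+1) = v_2((d^x)^q + 1) = v_2(d^x+1) = \alpha$. For the other side, the key structural identity is $B - 1 = A^s$, so $v_2(B-1) = s\alpha$, while $v_2(B+1) = v_2(A^s+2) = 1$ because $s\alpha \geq 2$. Lemma~\ref{lem:v2_powers-1} then gives $v_2(B^t - 1) = s\alpha$ if $t$ is odd, and $v_2(B^t-1) = s\alpha + v_2(t)$ if $t$ is even; either way $v_2(B^t-1) \geq s\alpha \geq 2\alpha > \alpha$, contradicting $v_2(B^t-1) = \alpha$.

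The conceptual driver is that the exponent $s \geq 2$ amplifies the $2$-adic valuation of $B - 1 = A^s$ by a factor of $s$, while the constraint $y = qx$ with $q$ odd (imposed by the algebraic structure of $A$) prevents any corresponding amplification on the $d^y+1$ side. I do not expect genuine obstacles beyond this observation: the congruence argument forcing $y = qx$ and the lifting-the-exponent computations for $v_2$ are both elementary, and none of the deep Diophantine results of Section~\ref{sec:prelim} are needed.
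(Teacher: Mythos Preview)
Your proof is correct and follows essentially the same $2$-adic valuation strategy as the paper: both show that the factor $s\geq 2$ amplifies $v_2$ on one side of $B^t-1=d^y+1$ beyond what the other side can match. The one noteworthy difference is how $y$ is controlled. The paper first invokes Nagell's theorem (Theorem~\ref{thm:Nagell}) to rule out $y$ even, then uses Lemma~\ref{lem:divis_axplus1} to obtain $x\mid y$ and hence $x$ odd, finally reducing everything to $v_2(d+1)$. You instead argue directly from $d^y\equiv -1\pmod{A}$ and $d^x\equiv -1\pmod{A}$ that $y=qx$ with $q$ odd, bypassing Nagell entirely and working at the level of $\alpha=v_2(d^x+1)$ rather than $v_2(d+1)$. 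Your route is slightly more self-contained; the paper's route makes the parity of $y$ (and then of $x$) explicit at the outset. Both arrive at the same contradiction $s\alpha\leq\alpha$ (in the paper's notation, $s\,v_2(d+1)\leq v_2(d+1)$).
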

\begin{proof} Let $d,x,y,s,t$ be integers with $x \geq 1$,   ${d,s,t,y} \geq 2$, satisfying \eqref{eq:48}.
We may assume that $d$ is odd, since otherwise ${t, y} \geq 2$ leads to a contradiction  modulo $4$.

If $y$ is even, then Theorem~\ref{thm:Nagell} implies $d = 5$ and $(d^x + 1)^s + 1 = 3$, a contradiction. We may thus suppose that $y$ is odd and, 
using the binomial theorem,  rewrite \eqref{eq:48} as
\begin{equation}\label{eq:48rew}
	 \sum_{i=1}^t {t \choose i} (d^x + 1)^{si}
	 = d^y + 1.
\end{equation}
Since the left hand side of \eqref{eq:48rew} is divisible by $(d^x + 1)$, we have $d^x+1 \mid d^y+1$. By Lemma~\ref{lem:divis_axplus1}, this implies that $x\mid y$ and since we are assuming that $y$ is odd, $x$ is odd as well. 

Now we consider the 2-adic valuations of the expressions in \eqref{eq:48rew}. 
For the left hand side we have
\begin{equation*}
	v_2\left( 
		\sum_{i=1}^t {t \choose i} (d^x + 1)^{si}
		\right)
	\geq v_2((d^x+1)^s)
	=s v_2(d^x+1)
	=s v_2(d+1),
\end{equation*}
where for the last equality we have applied Lemma~\ref{lem:v2_oddexpo} and the fact that $x$ is odd.
Since $y$ is odd as well, for the right hand side of \eqref{eq:48rew} we have
\begin{equation*}
	v_2(d^y+1)
	=v_2(d+1).
\end{equation*}
Comparing the valuations of both sides of \eqref{eq:48rew}, we thus obtain
\[
	s v_2(d+1)
	\leq v_2(d+1),
\]
which is a contradiction for $s \geq 2$, since $v_2(d+1)>0$.
\end{proof}

\begin{newprop}\label{prop:eq50}
The equation
\begin{equation}\label{eq:50}
	(d^x + 1)^s - (d^y - 1)^m = 2
\end{equation}
has no solution in integers $d,x,y,s,m$ with $x,y \geq 1$ and ${d,s,m} \geq 2$.
\end{newprop}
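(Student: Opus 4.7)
The plan is to mirror the structure of Proposition~\ref{prop:sit1}'s proof: reduce first the parities of $d$, $m$ and $s$ by elementary congruences combined with the classical Diophantine inputs from Section~\ref{sec:prelim} (Nagell, Bennett, LTE), and then close the last remaining case by invoking Proposition~\ref{prop:main} on $x^2-2=y^n$. First I would show $d$ must be even: if $d$ were odd, then both $d^x+1$ and $d^y-1$ are even, so $4\mid (d^x+1)^s$ and $4\mid (d^y-1)^m$ (as $s,m\ge 2$), forcing the left-hand side to be $\equiv 0\pmod 4$, which contradicts its equaling $2$.

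Next I would argue that $m$ must be odd. Assuming $m=2m'$ is even, the equation rewrites as
\[
    (d^x+1)^s - \bigl((d^y-1)^{m'}\bigr)^{2} = 2,
\]
which by Nagell's theorem (Theorem~\ref{thm:Nagell}) forces $\bigl(d^x+1,(d^y-1)^{m'},s\bigr)=(3,5,3)$. This gives $d=2$, $x=1$, $s=3$ and $(2^y-1)^{m'}=5$, and the last equation has no positive integer solution. Hence $m$ is odd. For the subcase $s$ odd I would rewrite the equation as $(d^x+1)^s-1=(d^y-1)^m+1$ and compute 2-adic valuations on both sides. Using Lemma~\ref{lem:v2_powers-1} (with $s$ odd) and Lemma~\ref{lem:v2_oddexpo} (with $m$ odd) one gets
\[
    x\,v_2(d)=v_2\bigl((d^x+1)^s-1\bigr)=v_2\bigl((d^y-1)^m+1\bigr)=y\,v_2(d),
\]
so $x=y$, and the equation becomes $(d^x+1)^s-(d^x-1)^m=2$. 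Provided $d\ge 3$ or $x\ge 2$, both bases are $\ge 2$, so Theorem~\ref{thm:bennett2001-t2} gives at most one positive-integer solution; since $(s,m)=(1,1)$ already works, no solution has $s,m\ge 2$. The only remaining corner $d=2, x=1$ reduces to $3^s-1=2$, forcing $s=1$.

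It remains to handle $s$ even (with $m$ odd $\ge 3$). Writing $s=2s'$ and $Z=(d^x+1)^{s'}$, we obtain $Z^2-2=(d^y-1)^m$, and choosing any prime $p\mid m$ (noting $p$ is odd) we put $Y'=(d^y-1)^{m/p}$ and rewrite this as $Z^2-2={Y'}^{p}$ with $p$ prime. Proposition~\ref{prop:main} then applies and in particular forces $Y'\equiv -1\pmod Q$, where $5\in S$. Working mod $5$ (noting $m/p$ is odd since both $m$ and $p$ are): a short case check on $d\pmod 5$ (the residues $1,2,3,4$ each fail to produce $(d^y-1)^{m/p}\equiv -1\pmod 5$ for $m/p$ odd) shows that this congruence forces $5\mid d$. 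This immediately rules out $d=2$, disposing of that whole subcase.

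The main obstacle is the residual case $d\ge 4$ with $s$ even. To finish it, I would combine the now-known divisibility $5\mid d$ (and analogous constraints from further primes $q\in S$, e.g.\ $d\not\equiv 1\pmod 3$, which otherwise forces $(d^y-1)^m\equiv 0\pmod 3$ and hence $s$ odd) with the LTE identities
\[
    v_p(s)-v_p(m)=(y-x)\,v_p(d)\qquad\text{for each odd prime }p\mid d,
\]
obtained from Lemma~\ref{lem:vp-1} and Lemma~\ref{lem:vp+1} applied to $(d^x+1)^s-1=(d^y-1)^m+1$, together with its $2$-adic analogue via Lemma~\ref{lem:v2_powers-1}. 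These collectively force $y\ge x$ and $d^{y-x}\mid s$, and combined with the size estimate $s\le ym/x+O(1)$ derived from the original equation, they pin down $(d,x,y,m,s)$ in a finite, small range which can be ruled out directly (or, in the subcase $m=p$ prime, Proposition~\ref{prop:main} upgrades the mod-$Q$ condition to $Q\mid d$, and the size bound $d^{y-x}\le ym/x$ quickly collapses the possibilities). The hard part is thus coordinating the mod-$Q$, LTE, and size constraints to close out this final case uniformly; all the earlier reductions are essentially routine applications of the results cited from Section~\ref{sec:prelim}.
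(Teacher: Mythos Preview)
Your elementary reductions (parity of $d$, Nagell to force $m$ odd, Bennett for $x=y$) are correct and match the paper. Your $s$-odd case is actually a pleasant alternative to the paper's separate Cases~1 ($x=y$) and~2 ($y<x$): the $2$-adic LTE identity $x\,v_2(d)=y\,v_2(d)$ forces $x=y$ in one stroke, after which Theorem~\ref{thm:bennett2001-t2} finishes exactly as you say.

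The $s$-even case, however, has a genuine gap in how you invoke Proposition~\ref{prop:main}. You apply it to $Z^2-2={Y'}^{p}$ with $Y'=(d^y-1)^{m/p}$ for a prime $p\mid m$; this only yields $(d^y-1)^{m/p}\equiv -1\pmod{Q}$, and your hoped-for deduction ``$q\mid d$ for further primes $q\in S$'' fails already at $q=7$ (for instance $3^3\equiv -1\pmod 7$, so $d^y-1\equiv 3\pmod 7$ is compatible without $7\mid d$). It also leaves composite $m$ unbounded. The fix---and this is what the paper does---is to apply Proposition~\ref{prop:main} directly with exponent $n=m$ and base $Y=d^y-1$. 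Part~(i) then \emph{forces} $m$ to be prime, part~(ii) gives $m\le 1237$, and part~(iv) gives $d^y-1\equiv -1\pmod Q$, whence $Q\mid d$. Your parenthetical ``in the subcase $m=p$ prime'' is in fact the only subcase.

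Once you have $m\le 1237$ and $Q\mid d$, your own LTE machinery closes the argument. Your identities give $y>x$ and hence $d\mid s$ (the edge case $v_2(d)=1$, $x=1$ still gives $(d/2)\mid s$, or one can simply use the paper's binomial expansion modulo $d^{x+1}$ to get $d\mid s$ directly); thus $Q\mid s$ and $s>10^{102}$. From the $2$-adic identity you get $y-x\le v_2(s)\le\log_2 s$, and the size comparison $(d^x+1)^s>(d^y-1)^m$ gives $xs\le ym$, so
\[
s\le \frac{ym}{x}=\Bigl(1+\frac{y-x}{x}\Bigr)m\le m\bigl(1+\log_2 s\bigr),
\]
which is impossible for $m\le 1237$ and $s>10^{102}$. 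This is exactly the paper's endgame, reached via a slightly different (and equally valid) route; the only real repair needed is the correct application of Proposition~\ref{prop:main}.
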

\begin{proof}
Let $d,x,y,s,m$ be integers with $x,y \geq 1$, ${d,s,m} \geq 2$ satisfying ~\eqref{eq:50}.
We may again assume that $d$ is even, since otherwise ${s, m} \geq 2$ leads to a contradiction modulo $4$.

If $m$ is even, then Theorem~\ref{thm:Nagell} implies either $d^y - 1 = 1$ and $(d^x+1)^s = 3$, or $(d^y - 1)^m = 5^2$ and $(d^x + 1)^s = 3^3$. It is easy to check that neither is possible. Thus we have $m$ odd. We distinguish between three cases.

\caselI{Case 1:} $x = y$. 
Then we have
\begin{equation}\label{eq:x_equal_y}
	(d^x + 1)^s - (d^x - 1)^m = 2.
\end{equation}
If $d^x - 1 \leq 1$, then we must have $d=2$ and $x = 1$, which does not lead to a solution as $(d^x + 1)^s \geq 3^2 = 9$.
Thus we have $d^x - 1 \geq 2$ and $d^x +1 \geq 2$. Since the equation $(d^x + 1)^\alpha - (d^x - 1)^\beta = 2$ has solutions with exponents $(\alpha,\beta)=(1,1)$ and $(s,m)$, this contradicts Theorem~\ref{thm:bennett2001-t2}.

\caselI{Case 2:} $y<x$. Since $m$ is odd, using the binomial theorem, we can rewrite \eqref{eq:50} as
\begin{equation}\label{eq:50_m_odd}
	\sum_{i=1}^s {s \choose i} d^{xi}
	= \sum_{j=1}^m {m \choose j} (-1)^{m - j} d^{yj}.
\end{equation}
Since $1\leq y < x$, this implies that $d \mid {m \choose 1} = m$, which is a contradiction to $d$ being even and $m$ being odd.

\caselI{Case 3:} $y>x$.
By an analogous argument to that  in Case 2, we have that $d \mid s$, and, in particular, that $s$ is even. 
We may therefore apply Proposition~\ref{prop:main} to our main equation \eqref{eq:50}, to conclude that $m \leq 1237$ and  $d^y -1 \equiv -1 \pmod{Q}$, where $Q$ is the product of primes defined in Proposition~\ref{prop:main}.
The congruence implies 
$ Q \mid d^y$ and therefore $Q \mid d$. Since $d \mid s$, we have  in particular that
$Q \mid s$, and so $s > 10^{102}$.
In other words, we have that, in equation~\eqref{eq:50}, $s$ is extremely large, while $m$ is rather small.
Now note that
\begin{equation}\label{eq:prep_xsym}
	2 
	= (d^x + 1)^s - (d^y - 1)^m
	> d^{xs} - d^{ym},
\end{equation}
whence
\begin{equation}\label{eq:xsym}
	xs \leq ym.
\end{equation}
In order to obtain a contradiction, we consider the $2$-adic valuation of the expressions in equation \eqref{eq:50}. We rewrite this as 
\begin{equation}\label{eq:50rewr}
	(d^x + 1)^s - 1 = (d^y - 1)^m + 1.
\end{equation}
Since $d^y - 1 \geq 1$ and $m$ is odd, by Lemma~\ref{lem:v2_oddexpo}, the $2$-adic valuation of the right hand side of \eqref{eq:50rewr} is 
\[
 	v_2((d^y - 1)^m + 1) 
 	= v_2(d^y)
 	= y v_2(d).
\]
For the left hand side, we obtain from Lemma~\ref{lem:v2_powers-1} that
\begin{align*}
	v_2((d^x + 1)^s - 1)
	&\leq v_2((d^x+1)^2 - 1) + v_2(s) - 1\\
	&= v_2(d^x) + v_2(d^x + 2) + v_2(s) - 1\\
	&\leq \frac{\log (d^x + 2)}{\log 2} + \frac{\log s}{\log 2}
	\leq \frac{x \log d + \log 2 + \log s}{\log 2}.
\end{align*}
Moreover, recall that $d\mid s$, so $d\leq s$. We estimate further:
\[
	v_2((d^x + 1)^s - 1)
	\leq \frac{x \log s + \log 2 + \log s}{\log 2}
	\leq \frac{2x \log s + \log 2}{\log 2}
	< 8 x \log s.
\]
It follows then that
\[
	y v_2(d)
	\leq 8 x \log s.
\]
Combining this with \eqref{eq:xsym}, we obtain
\[
	xs 
	\leq m y
	\leq m \cdot y v_2(d)
	\leq m \cdot  8 x \log s.
\]
This implies
\[
	s \leq 8m \log s,
\]
which is impossible for $m\leq 1237$ and $s >10^{102}$.
\end{proof}

\begin{newprop}\label{prop:eq49}
The equation
\begin{equation}\label{eq:49}
	d^y - ((d^x - 1)^s - 1)^t = 2
\end{equation}
has no solution in integers $d,x,y,s,t$ with $x\geq 1$ and ${d,y,s,t} \geq 2$.
\end{newprop}
\begin{proof}
Let $d,x,y,s,t$ be integers with $x\geq 1$, ${d,y,s,t} \geq 2$,  satisfying equation \eqref{eq:49}.
Yet again, we may assume that $d$ is odd, since otherwise ${y, t} \geq 2$ leads to a contradiction  modulo $4$.
If $t$ is even, then Theorem~\ref{thm:Nagell} implies $d=3$ and $(d^x - 1)^s - 1 = 5$, which is impossible. Thus we have $t$ odd.

We distinguish between two cases, corresponding to the parity of $y$.

\caselI{Case 1:} $y$ is odd.
Using the binomial theorem, we rewrite \eqref{eq:49} as
\begin{equation}\label{eq:49_todd}
	d^y - 1
	= \sum_{i=1}^t {t \choose i}(-1)^{t-i} (d^x-1)^{si},
\end{equation}
which implies $d^x-1 \mid d^y - 1$. From Lemma~\ref{lem:qx-1_qy-1} it follows that $x\mid y$, and in particular $x$ is odd as well.

Now we compare the 2-adic valuations of the expressions in equation~\eqref{eq:49_todd}. Since $d$ and $y$ are odd, for the left hand side we have by Lemma~\ref{lem:v2_powers-1} that
\[
	v_2(d^y - 1)
	= v_2(d - 1).
\]
On the other hand, the right hand side of \eqref{eq:49_todd} is equal to $((d^x-1)^s-1)^t + 1$. We recall that $t$ is odd and apply Lemma~\ref{lem:v2_oddexpo}, then we recall that $d$ and $x$ are odd and apply Lemma~\ref{lem:v2_powers-1}. This way we obtain
\[
	v_2\left(
		((d^x-1)^s-1)^t + 1
		\right)
	= v_2((d^x-1)^s)
	= s v_2(d^x-1)
	= s v_2(d-1).
\]
Comparing both sides of \eqref{eq:49_todd}, we obtain
\[
	v_2(d - 1) = s v_2(d-1),
\]
which is a contradiction for $s \geq 2$, since $v_2(d - 1) > 0$.

\caselI{Case 2:} $y$ is even. 
Then Proposition~\ref{prop:main} applied to equation~\eqref{eq:49} implies that $t$ is prime with $41 \leq t \leq 1237$, $t \equiv 13,17,19,23 \pmod{24}$ and 
\[
	(d^x-1)^s-1 \equiv -1 \pmod{Q},
\]
where $Q$ is the product of primes defined in Proposition~\ref{prop:main}.
The last congruence implies 
\begin{equation}\label{eq:congr_dx}
	d^x \equiv 1 \pmod{Q}.
\end{equation}

Since equation \eqref{eq:49} implies that  $x \leq y$, reducing equation \eqref{eq:49} modulo $d^x$, we find that
\[
	0 - ((-1)^s - 1)^t \equiv 2 \pmod{d^x}.
\]
If $s$ is even, we obtain a contradiction (note that $d^x >2$, as $d$ is odd). We may thus suppose that  $s$ is odd, whence
\[
	- (-2)^t \equiv 2 \pmod{d^x}.
\]
This implies (since $t$ and $d$ are odd)
\[
	2^{t-1} \equiv 1 \pmod{d^x},
\]
and therefore 
\[
	d^x \mid 2^{t-1} - 1.
\]

Suppose first that $x = 1$. Then, setting $f = d-1$, equation \eqref{eq:49} becomes 
\[
	(f+1)^y - (f^s -1)^t = 2.
\]
Since $d$ is odd, we have $d\geq 3$ and therefore $f\geq 2$.  Proposition~\ref{prop:eq50} thus yields a contradiction.
We may therefore suppose that $x\geq 2$. 

There are only $102$ primes $t$ with $41 \leq t \leq 1237$, $t \equiv 13,17,19,23 \pmod{24}$. For each $t$ we look at the prime factorization of $2^{t-1} - 1$ (we use the known factorizations from the Factoring Database \cite{factordb}). Then we check all divisors of the shape $d^x$ with $x\geq 2$.
They are all rather small, and none of them satisfies \eqref{eq:congr_dx}. In fact, the only factor $d^x$ with $x\geq 2$ and $d^x \geq 10^5$ appears for $t = 1093$, where we have 
\[
2^{1093-1} = 3^3 \cdot 7^2 \cdot 13^2 \cdot 1093^2 \cdot t_0,
\]
with $t_0$ squarefree. In this case,
$$
d^x= (3\cdot 7 \cdot 13 \cdot 1093)^2
$$
is still much too small  to satisfy \eqref{eq:congr_dx}.
Relevant factorizations and code can be found at: 
\url{https://cocalc.com/IngridVukusic/ConsecutiveTriples/Factorizations}

\noindent This completes the proof of Proposition \ref{prop:eq49}.

\end{proof}

\section{Proof of Theorem \ref{thm:3x2md}}\label{sec:proof_3x2md}

Assume that $a,b,c\geq 2$ are distinct integers such that $(a,b,c)$, $(a+1,b+1,c+1)$ and $(a+2,b+2,c+2)$ are each 2-multiplicatively dependent. We wish to derive a contradiction.
Our proof of this consists of a number of cases,  split  into three parts. In the first part, we assume that $\{2,8\}\subset\{a,b,c\}$. In the second part, we suppose that  $8\in \{a,a+1, b, b+1, c, c+1\}$. In the third part, we treat all other cases.

\subsection{Part 1}
Assume that $\{2,8\}\subset\{a,b,c\}$. In this part of the proof, we do not assume that $a,b,c$ are ordered. Thus, we may assume, without loss of generality, that $a=2$ and $b=8$. Then $(a,b,c)=(2,8,c)$ and $(a+1,b+1,c+1)=(3,9,c+1)$ are indeed each 2-multiplicatively dependent. The triple $(a+2,b+2,c+2)=(4,10,c+2)$ is 2-multiplicatively dependent\ if and only if $c$ is of the shape $2^x - 2$ or $10^x -2$. This exactly corresponds to the ordered triples from \eqref{eq:3x2-md} in Theorem~\ref{thm:3x2md}.

\subsection{Part 2}
Assume that $8\in \{a,a+1, b, b+1, c, c+1\}$. In this part of the proof we again do not assume that $a,b,c$ are ordered.
Therefore, we may assume, without loss of generality, that either $a=8$ or $a+1=8$. 

\caselI{Case a:} $a=8$.
Let us assume that $b,c \neq 2$, as those cases have been treated in Part 1 already. Now the triple $(a,b,c)=(8,b,c)$ is 2-multiplicatively dependent. Since the numbers $a,b,c$ are not ordered, we have, without loss of generality,  only two subcases, corresponding to $(a,b)$ or $(b,c)$, respectively, being multiplicatively dependent.  

\caselII{Case a.1:} $(a,b)=(8,b)$ is multiplicatively dependent, i.e.\ $b= 2^x$ for some $x>1$, $x\neq 3$. Since $(a+1,b+1,c+1)$ is also 2-multiplicatively dependent, and, via Lemma~\ref{lem:consecutivePairs}, $(a+1,b+1)$ cannot be multiplicatively dependent, it follows that either $(a+1,c+1)$ or $(b+1,c+1)$ is multiplicatively dependent.

\caselIII{Case a.1.1:} $(a+1, c+1) = (9, c+1)$ is multiplicatively dependent, i.e.\ $c = 3^y -1$ with $y>2$. Since $(a+2,b+2,c+2)$ is 2-multiplicatively dependent\ and both $(a,b)$ and $(a+1,c+1)$ are multiplicatively dependent, by Lemmata \ref{lem:consecutivePairs} and \ref{lem:consecutivePairs2}, we must have that $(b+2,c+2) = (2^x + 2, 3^y + 1)$ is multiplicatively dependent, i.e.\ $(2^x + 2, 3^y + 1) = (q^s, q^t)$ for some positive integers $s\neq t$ and $q>1$. Since $y>2$, the equation $3^y + 1 = q^t$ implies by Theorem~\ref{thm:Catalan} that $t = 1$. Then we have 
\begin{equation*}
	2^x + 2
	= q^s 	
	= (3^y + 1)^s.
\end{equation*}
Rewriting this as
\[
	((2^1 + 1)^y + 1)^s - 2^x = 2,
\]
this contradicts Proposition~\ref{prop:eq48}.

\caselIII{Case a.1.2:} $(b+1,c+1) = (2^x + 1, c+1)$ is multiplicatively dependent.
Since $(a+2,b+2,c+2)$ is 2-multiplicatively dependent\ and both $(a,b)$ and $(b+1,c+1)$ are multiplicatively dependent, by Lemmata \ref{lem:consecutivePairs} and \ref{lem:consecutivePairs2}, we must have that $(a+2,c+2) = (10, c+2)$ is multiplicatively dependent, i.e.\ $c=10^y - 2$ with $y>1$. Then since $(b+1,c+1)=(2^x + 1, 10^y - 1)$ is multiplicatively dependent, we have $(b+1,c+1) = (2^x + 1, 10^y - 1) = (q^s, q^t)$ for some positive integers $s\neq t$ and $q>1$. On the one hand, since $x\neq 1,3$, the equation $2^x + 1 = q^s$ by Theorem~\ref{thm:Catalan} implies $s = 1$. Since $y>1$, the equation $10^y - 1 = q^t$ implies by Theorem~\ref{thm:Catalan} that $t=1$. Thus $t = s$, a contradiction.

\caselII{Case a.2:} $(b, c)$ is multiplicatively dependent\ and we still have $a=8$.  Then we have $(b,c) = (q^x, q^y)$ for some positive integers $x\neq y$ and $q>1$. 

Next, $(a+1,b+1,c+1)$ is 2-multiplicatively dependent, so by Lemma~\ref{lem:consecutivePairs} either $(a+1,b+1)$ is multiplicatively dependent, or $(a+1,c+1)$ is multiplicatively dependent. Since the numbers are not ordered, we may assume, without loss of generality, that $(a+1,b+1)=(9, q^x + 1)$ is multiplicatively dependent. Then we have $b+ 1 = q^x + 1 = 3^s$ for some $s >2$. But then Theorem~\ref{thm:Catalan} implies that $x = 1$ and thus $c = q^y = b^y = (3^s - 1)^y$.

Finally, since $(a+2,b+2,c+2)$ is 2-multiplicatively dependent\ and both $(b,c)$ and $(a+1,b+1)$ are multiplicatively dependent, by Lemmata \ref{lem:consecutivePairs} and \ref{lem:consecutivePairs2}, we must have that $(a+2, c+2) = (10, q^y + 2)$ is multiplicatively dependent.
This implies $c+2 = q^y + 2 = 10^t$ for some $t>1$. 

Summarizing, we find that
\[
	10^t	
	= c + 2
	= q^y + 2
	=  (3^s - 1)^y + 2.
\]	
Since $t>1$, the left hand side of the above chain of equations is divisible by 4. On the other hand, since $y>1$,
the right hand side is congruent 2 modulo 4, a contradiction.

\caselI{Case b:} $a = 7$. 
We proceed analogously to Case a. Again we have two subcases, depending on which of  $(a,b)$ or $(b,c)$ is multiplicatively dependent.

\caselII{Case b.1:} $(a,b) = (7,b)$ is multiplicatively dependent, i.e. $b = 7^x$ for some $x>1$.
And again, we have two subcases, namely $(a+1, c+1)$ is multiplicatively dependent or $(b+1,c+1)$ is multiplicatively dependent.

\caselIII{Case b.1.1:} $(a+1,c+1) = (8,c+1)$ is multiplicatively dependent, i.e. $c = 2^y -1$ for some $y>1$, $y\neq 3$. Then analogously to Case a.1.1 we have that $(b+2,c+2) = (7^x + 2, 2^y + 1)$ is multiplicatively dependent, i.e.\ $(7^x + 2, 2^y + 1) = (q^s,q^t)$ for some positive integers $s\neq t$. Since $y>1$, $y\neq 3$, the equation $2^y + 1 = q^t$ implies by Theorem~\ref{thm:Catalan} that $t=1$. Then we have
\begin{equation*}
	7^x + 2
	= q^s 
	= (2^y + 1)^s.
\end{equation*}
Rewriting this as
\[
	(2^y + 1)^s - (2^3 -1)^x = 2,
\]
we immediately get a contradiction from Proposition~\ref{prop:eq50}.

\caselIII{Case b.1.2:} $(b+1,c+1) = (7^x + 1, c+1)$ is multiplicatively dependent. Then, as in Case a.1.2 we have that in the next triple $(a+2, c+2) = (9, c+2)$ must be multiplicatively dependent, i.e.\ $c = 3^y - 2$ for some $y > 2$. Then since $(b+1,c+1) = (7^x + 1, 3^y - 1)$ is multiplicatively dependent, we have $(7^x + 1, 3^y - 1) = (q^s, q^t)$ for some positive integers $s \neq t$ and $q>1$. Then, since $x >1$ and $y>2$, Theorem~\ref{thm:Catalan} implies that $s = t = 1$, a contradiction.

\caselII{Case b.2:} $(b,c)$ is multiplicatively dependent\ and we still have $a = 7$. Then we have $(b,c) = (q^x, q^y)$ for some positive integers $x\neq y$ and $q>1$.  As in Case a.2, we may assume that $(a+1,b+1)=(8,b+1)$ and $(a+2,c+2)=(9,c+2)$ are each multiplicatively dependent. Thus we get $b = 2^s - 1$ for some $s>1$ and $s\neq 3$, and $c= 3^t - 2$ for some $t >2$. Now we have $(2^s - 1, 3^t - 2) = (b,c) = (q^x, q^y)$. Since $s>1$, $s\neq 3$, Theorem~\ref{thm:Catalan} implies $x=1$. Thus we obtain
\[
	3^t - 2
	= c
	= q^y
	= b^y
	= (2^s - 1)^y.
\]
Rewriting this as
\[
	(2^1 + 1)^t - (2^s - 1)^y = 2,
\]
we immediately reach a contradiction from Proposition~\ref{prop:eq50}.

\subsection{Part 3}
In Parts 1 and 2,  we have handled all cases where 
$$
\{8,9\} \subset \{a,a+1,a+2\}, \; \; \{8,9\} \subset \{b,b+1,b+2\} \; \; \mbox{ or } \; \; \{8,9\} \subset \{c,c+1,c+2\}. 
$$
In Part 3, we now assume that we are in none of those cases. Therefore, by Theorem~\ref{thm:Catalan}, for $d=a,b,c,$ we have that
\begin{equation}\label{eq:cond28}
\text{the set $\{d,d+1,d+2\}$ does not contain two consecutive perfect powers.}
\end{equation}
For the remainder of the proof, we assume that $1<a<b<c$.
From Lemmata \ref{lem:consecutivePairs} and \ref{lem:consecutivePairs2}, it follows that the triples must satisfy
\begin{align*}
	(a+i,b+i,c+i) 
	&= (p^s, p^t, c+i),\\
	(a+j,b+j,c+j)
	&= (q^u, b+j , q^v),\\
	(a+k,b+k,c+k)
	&= (a+k, r^x, r^y),
\end{align*}
where $\{i,j,k\}=\{0,1,2\}$ and $p,q,r,s,t,u,v,x,y$ are positive integers. Since $1<a<b<c$, we have $p,q,r>1$, $s<t$, $u<v$ and $x<y$, and in particular $t,v,y>1$. Therefore, $q^v$ and $r^y$ are both perfect powers and by \eqref{eq:cond28} we have
$q^v - r^y \neq \pm 1$, so $j-k\neq \pm 1$, i.e.\ $\{j,k\}=\{0,2\}$ and $i=1$. Thus $p^t-r^x = \pm 1$, and since $t>1$, this implies $x=1$. 

Also, $p^s - q^u =\pm 1$, so either $s=1$ or $u=1$. Now we consider 4 cases, according to whether $(j,k)=(0,2)$ or $(j,k)=(2,0)$, and whether $s=1$ or $u=1$.

\caselI{Case 1:} $(j,k)=(0,2)$ and $s=1$. Then we have
\begin{align*}
	(a,b,c)
	&= (q^u, b, q^v),\\	
	(a+1,b+1,c+1) 
	&= (p, p^t, c+1),\\
	(a+2,b+2,c+2)
	&= (a+2, r, r^y).
\end{align*}
Now we see that $p=q^u+1$ and $r=p^t+1$, so $r=(q^u+1)^t +1$ and therefore,
\[
	2=r^y-q^v =((q^u+1)^t +1)^y - q^v.
\]
Since ${q,t,y,v}\geq 2$, this is impossible by Proposition~\ref{prop:eq48}.

\caselI{Case 2:} 
$(j,k)=(0,2)$ and $u=1$. Then we have
\begin{align*}
	(a,b,c)
	&= (q, b, q^v),\\	
	(a+1,b+1,c+1) 
	&= (p^s, p^t, c+1),\\
	(a+2,b+2,c+2)
	&= (a+2, r, r^y),
\end{align*}
and we see that $q=p^s-1$ and $r=p^t+1$, so
\[
	2=r^y-q^v = (p^t+1)^y - (p^s-1)^v.
\]
Since ${p,y,v}\geq 2$, this is impossible by Proposition~\ref{prop:eq50}.

\caselI{Case 3:} 
$(j,k)=(2,0)$ and $s=1$. Then we have
\begin{align*}
	(a,b,c)
	&= (a, r, r^y),\\	
	(a+1,b+1,c+1) 
	&= (p, p^t, c+1),\\
	(a+2,b+2,c+2)
	&= (q^u, b+2, q^v),
\end{align*}
and we see that $p=q^u-1$ and $r=p^t-1=(q^u-1)^t-1$, so
\begin{equation*}
	2=q^v - r^y = q^v - ((q^u-1)^t-1)^y.
\end{equation*}
Since  ${q, v,t,y }\geq 2$, this is impossible by Proposition~\ref{prop:eq49}.

\caselI{Case 4:} 
$(j,k)=(2,0)$ and $u=1$. Then we have
\begin{align*}
	(a,b,c)
	&= (a, r, r^y),\\	
	(a+1,b+1,c+1) 
	&= (p^s, p^t, c+1),\\
	(a+2,b+2,c+2)
	&= (q, b+2 , q^v),
\end{align*}
and we see that $q=p^s+1$ and $r=p^t-1$, so
\begin{equation*}
	2=q^v - r^y = (p^s+1)^v - (p^t-1)^y.
\end{equation*}
Since ${p, v, y}\geq 2$, this is impossible by Proposition~\ref{prop:eq50}.

This completes the proof of Theorem \ref{thm:3x2md}.

\bibliographystyle{habbrv}
\bibliography{refs}

\end{document}